\documentclass[11pt,dvipsnames,reqno,twoside a4paper]{amsart}
\usepackage{a4wide}
\usepackage[T1]{fontenc}
\usepackage{dsfont}
\usepackage{yhmath,mathrsfs,amsthm,amsmath,amssymb,amsfonts, enumerate,lipsum, appendix,mathtools, bm,esint}
\usepackage{bbold}
\usepackage[mathscr]{euscript}
\usepackage[normalem]{ulem} 
\usepackage{graphicx}
\usepackage{pgf,tikz}
\usepackage{tkz-euclide}
\usepackage{graphicx}
\usepackage{subcaption}
\usetikzlibrary{shapes.geometric}
\usetikzlibrary{arrows,hobby}
\usepackage{enumitem}
\usepackage[english]{babel}

\allowdisplaybreaks

\usepackage[sort,numbers]{natbib}
\usepackage{orcidlink}
\usepackage{hyperref}
\hypersetup{
	colorlinks=true,
	linkcolor=vd,
	filecolor=blue,      
	urlcolor=dt,
	citecolor=dt,
}
\usepackage[margin=1in, heightrounded]{geometry}
\definecolor{dt}{rgb}{0.73, 0.31, 0.28}
\definecolor{vd}{rgb}{0.0, 0.2, 0.67}
\setcounter{tocdepth}{2}

\usepackage{orcidlink}
\newtheorem{theorem}{Theorem}[section]
\newtheorem{lemma}[theorem]{Lemma}
\newtheorem{proposition}[theorem]{Proposition}
\newtheorem{definition}[theorem]{Definition}
\theoremstyle{definition}

\newtheorem{remark}[theorem]{Remark}
\numberwithin{equation}{section}
\usepackage[hyperpageref]{backref}

\newcommand*\rd{\mathbb{R}^d}

\newcommand*\N{\mathcal{N}}
\newcommand{\al} {\alpha}
\newcommand{\ep} {\var}
\newcommand{\pa} {\partial}
\newcommand{\be} {\beta}

\newcommand{\Om} {\Omega}
\newcommand{\la} {\lambda}

\newcommand{\si} {\sigma}

\newcommand{\no} {\nonumber}
\newcommand{\noi} {\noindent}
\newcommand{\var} {\varepsilon}
\newcommand{\ra} {\rightarrow}

\newcommand{\ov} {\overline}

\def\t{\tau}
\def\d{\,{\rm d}}
\def\dx{\,{\rm d}x}
\def\dy{\,{\rm d}y}

\def\C{{\mathcal C}}
\def\D{{\mathcal D}}

\def\M{{\mathcal M}}
\def\n{{\mathcal N}}

\def\W{{\mathcal W}}

\def\R{{\mathbb R}}
\def\N{{\mathbb N}}

\def\({{\Big(}}
\def\){{\Big)}}
\def\cc{{\C_c^\infty}}

\def\dx{\,{\rm d}x}
\def\dX{\,{\rm d}x_m}
\def\dz{\,{\rm d}z}

\def\dxy{\,{\rm d}x{\rm d}y}
\DeclarePairedDelimiter\abs{\lvert}{\rvert}%
\DeclarePairedDelimiter\norm{\lVert}{\rVert}%
\def\wps{{W_0^{s,p}(\Omega )}}

\def\psm{{p^*_s(m)}}

\title[Fractional $p$-Laplace Systems with critical Hardy Nonlinearities]{Fractional $p$-Laplace Systems with critical Hardy Nonlinearities: Existence and Multiplicity}
\author[N. Biswas, P. Das and S. Gupta]{Nirjan Biswas\,$^{1,\dagger}$\orcidlink{0000-0002-3528-8388}, Paramananda Das\,$^2$\,\orcidlink{0009-0009-2821-6675} \and Shilpa Gupta\,$^3$\ \orcidlink{0000-0002-4080-9782}}
\address{\rm $^{1,2}$\,Department of Mathematics, Indian Institute of Science Education and Research (IISER-Pune), Dr. Homi Bhaba Road, Pune-411008, India}
\address{\rm $^3$\,Department of Mathematics and Statistics, Indian Institute of Technology Kanpur, Kanpur-208016, India}

\email[N. Biswas]{nirjan.biswas@acads.iiserpune.ac.in, nirjaniitm@gmail.com} 
\email[P. Das]{pd348225@gmail.com, paramananda.das@students.iiserpune.ac.in}
\email[S. Gupta]{shilpagupta890@gmail.com, shilpa@iitk.ac.in}

\thanks{$^\dagger$Corresponding author}
\subjclass[2020]{Primary 35J50, 35B33, 35J60, 47G20}
\keywords{fractional $p$-Laplace system, critical exponent, concentration-compactness principle, ground state solutions, least energy solutions, Ljusternik-Schnirelmann category theory.}
\begin{document}
\begin{abstract}
Let $\Omega \subset \mathbb{R}^d$ be a bounded open set containing zero, $s \in (0,1)$ and $p \in (1, \infty)$. In this paper, we first deal with the existence, non-existence and some properties of ground-state solutions for the following class of fractional $p$-Laplace systems
\begin{equation*}
\left\{\begin{aligned}
&(-\Delta_p)^s u= \frac{\al}{q} \frac{|u|^{\alpha-2}u|v|^{\beta}}{\abs{x}^m} \;\;\text{in}\;\Omega,\\
&(-\Delta_p)^s v= \frac{\beta}{q} \frac{|v|^{\beta-2}v|u|^{\alpha}}{\abs{x}^m}\;\;\text{in}\;\Omega,\\
&u=v=0\, \mbox{ in }\mathbb{R}^d\setminus \Omega,
\end{aligned}
\right.
\end{equation*}
where $d>sp$, $\alpha + \beta = q$ where $p \leq q \leq p_{s}^{*}(m)$ where $p_{s}^{*}(m) = \frac{p(d-m)}{d-sp}$ with $0 \leq m \leq sp$. Additionally, we establish a concentration-compactness principle related to this homogeneous system of equations. Next, the main objective of this paper is to study the following non-homogenous system of equations
\begin{equation*}
\left\{\begin{aligned}
&(-\Delta_p)^s u = \eta \abs{u}^{r-2}u + \gamma \frac{\alpha}{p_{s}^{*}(m)} \frac{|u|^{\alpha-2}u|v|^{\beta}}{\abs{x}^m} \;\;\text{in}\;\Omega,\\
&(-\Delta_p)^s v = \eta \abs{v}^{r-2}v + \gamma \frac{\beta}{p^{*}_{s}(m)} \frac{|v|^{\beta-2}v|u|^{\alpha}}{\abs{x}^m}\;\;\text{in}\;\Omega,\\
&u=v=0\, \mbox{ in }\mathbb{R}^d\setminus \Omega,
\end{aligned}
\right.
\end{equation*}
where $\eta, \gamma > 0$ are parameters and $p \leq r < p_{s}^{*}(0)$. Depending on the values of $\eta, \gamma$, we obtain the existence of a non semi-trivial solution with the least energy. Further, for $m=0$, we establish that the above problem admits at least $\text{cat}_\Omega(\Omega)$ nontrivial solutions. 
\end{abstract}
\maketitle
\section{Introduction}
Let $\Omega \subset \mathbb{R}^d$ be a bounded open set containing $0$. For $0<s<1<p<\infty$ and $d>sp$, this paper deals with the following critical system of equations driven by the fractional $p$-Laplace operators: 
\begin{equation}\tag{$\mathcal S$}\label{system_eqn}
\left\{\begin{aligned}
&(-\Delta_p)^s u = \eta \abs{u}^{r-2}u + \gamma \frac{\al}{q} \frac{|u|^{\alpha-2}u|v|^{\beta}}{\abs{x}^m} \;\;\text{in}\;\Omega,\\
&(-\Delta_p)^s v = \eta \abs{v}^{r-2}v + \gamma \frac{\beta}{q} \frac{|v|^{\beta-2}v|u|^{\alpha}}{\abs{x}^m}\;\;\text{in}\;\Omega,\\
&u=v=0\, \mbox{ in }\mathbb{R}^d\setminus \Omega,
\end{aligned}
\right.
\end{equation}
where for $0 \le m \le sp$, $p^*_s(m) = \frac{p(d-m)}{d-sp}$ is the fractional Hardy-Sobolev exponent, $\eta, \gamma >0$ are parameters, $p \le r < p^*_s$ (where $p^*_s := p^*_s(0)$) and $\al, \be > 1$ be such that $\al + \be = q$  where $p \le q \le \psm$. The fractional $p$-Laplace operator $(-\Delta _p)^s$ is defined as
\begin{equation*}
   (-\Delta)_{p}^{s}u(x) = 2 \lim_{\var \rightarrow 0^{+}} \int_{\mathbb{R}^{d} \backslash B_{\var}(x)} \frac{|u(x) - u(y)|^{p-2}(u(x)-u(y))}{|x-y|^{d+sp}}\, \dy, \quad \text{for}~x \in \mathbb{R}^{d},
\end{equation*} 
where $B_{\var}(x)$ is the ball of radius $\var$ and centred at $x$. The fractional homogeneous Sobolev space $\D^{s,p}(\rd)$ is defined as the closure of $\C_c^\infty(\R^d)$ with respect to the Gagliardo seminorm \begin{equation*}
    [u]_{s,p}^p := \iint\limits_{\rd \times \rd}\frac{|u(x)-u(y)|^p}{|x-y|^{d+sp}}\,\dx\dy.
\end{equation*} In view of \cite[Theorem 3.1]{BrGoVaJu}, $\D^{s,p}(\R^d)$ has the following characterization:
\begin{equation*}
    \D^{s,p}(\rd):=\left\{u\in L^{p_s^*}(\rd):[u]_{s,p}<\infty\right\},
\end{equation*}
The space $\D^{s,p}(\rd)$ is a reflexive Banach space under the norm $[\cdot]_{s,p}$. Now consider the following closed subspace of $\D^{s,p}(\rd)$:
\begin{equation*}
    W_0^{s,p}(\Omega) :=\{u\in \D^{s,p}(\rd) : u=0 \text{ in }\rd \setminus \Om\},
\end{equation*}
endowed with the norm $[\cdot]_{s,p}$, which is equivalent to the fractional Sobolev norm $\|\cdot\|_{L^p(\Om)}+[\cdot]_{s,p}$. For further details on fractional Sobolev spaces and associated embedding results, we refer to \cite{SaSu, DiPaVa} and the references therein. We consider $\mathcal{W} = W_0^{s,p}(\Omega) \times W_0^{s,p}(\Omega)$ as the solution space for \eqref{system_eqn}, which is endowed with the following norm
\begin{align*}
    \norm{(u,v)}_{\W} := \left( [u]_{s,p}^p + [v]_{s,p}^p \right)^{\frac{1}{p}}. 
\end{align*}
The goal of this paper is twofold. First, we study the following homogeneous system of equations:
\begin{equation}\tag{$\mathcal S_H$}\label{system_eqn_homogenous}
\left\{\begin{aligned}
&(-\Delta_p)^s u= \frac{\al}{q} \frac{|u|^{\alpha-2}u|v|^{\beta}}{\abs{x}^m} \;\;\text{in}\;\Omega,\\
&(-\Delta_p)^s v= \frac{\beta}{q} \frac{|v|^{\beta-2}v|u|^{\alpha}}{\abs{x}^m}\;\;\text{in}\;\Omega,\\
&u=v=0\, \mbox{ in }\mathbb{R}^d\setminus \Omega.
\end{aligned}
\right.
\end{equation}
We consider the following quantities:
\begin{align*}
    &S_{\al+\be} := \inf_{u \in \wps \setminus \{ 0\}} \frac{[u]_{s,p}^p}{\left( \int_{\Omega} \frac{\abs{u(x)}^{q}}{\abs{x}^m} \dx \right)^{\frac{p}{q}}} \text{ and }
    S_{\al, \be} := \inf_{u,v \in W_0^{s,p}(\Om) \setminus \{0\}} \frac{\norm{(u,v)}^p_{\W}}{\left( \int_{\Omega} \frac{\abs{u}^{\alpha}\abs{v}^{\beta}}{\abs{x}^m} \dx \right)^{\frac{p}{q}}}.
\end{align*}
Observe that, if $u$ minimizes $S_{\al+\be}$, then so is $\abs{u}$, since $[\abs{u}]_{s,p} \le [u]_{s,p}$. So, without loss of generality, we assume that a minimizer of $S_{\al+\be}$ is nonnegative. A similar fact holds for a minimizer of $S_{\al, \be}$. If $(u, v) \in \mathcal{W}$ minimizes $S_{\al, \be}$, then it satisfies 
\begin{align}\label{weak2}
    &\iint\limits_{\rd\times\rd}\frac{|u(x)-u(y)|^{p-2}(u(x)-u(y))(\phi(x)-\phi(y))}{|x-y|^{d+sp}} \dx \dy \no \\ 
    &+\iint\limits_{\rd\times\rd}\frac{|v(x)-v(y)|^{p-2}(v(x)-v(y))(\psi(x)-\psi(y))}{|x-y|^{d+sp}} \dx \dy \no \\ 
    & =\frac{\la}{q} \left(\al \int_{\Omega} \frac{\abs{u}^{\al-2}u\abs{v}^{\beta}}{\abs{x}^m} \phi \dx + \be \int_{\Omega} \frac{\abs{v}^{\be-2}v\abs{u}^{\al}}{\abs{x}^m} \psi \dx \right),
\end{align}
for every $(\phi, \psi) \in \mathcal{W}$, where
\begin{align*}
    \la = \frac{\norm{(u,v)}^p_{\W}}{\int_{\Omega} \frac{\abs{u}^{\alpha} \abs{v}^{\beta}}{\abs{x}^m} \dx} =  S_{\al, \be} \left( \int_{\Omega} \frac{\abs{u}^{\alpha}\abs{v}^{\beta}}{\abs{x}^m} \dx \right)^{\frac{p}{q}-1}.
\end{align*} 
\begin{remark}\label{remark-1}
If $(u_0,v_0)$ minimizes $S_{\al, \be}$, then the following normalized pair 
\begin{align*}
  (u_1,v_1) :=  \left( S_{\al, \be}^{\frac{1}{q-p}} u_0 \left(\int_{\Omega} \frac{u_0^{\alpha} v_0^{\beta}}{\abs{x}^m} \dx \right)^{-\frac{1}{q}}, S_{\al, \be}^{\frac{1}{q-p}} v_0\left(\int_{\Omega} \frac{u_0^{\alpha} v_0^{\beta}}{\abs{x}^m} \dx \right)^{-\frac{1}{q}} \right)
\end{align*}
also minimizes $S_{\al, \be}$. Applying the Lagrange multiplier and  using \eqref{weak2} it follows that $(u_1,v_1)$ weakly solves \eqref{system_eqn_homogenous}. Moreover, taking $(u_1, 0)$ and $(0,v_1)$ as test functions in $\mathcal{W}$, we see the following equations hold weakly: 
\begin{align*}
    (-\Delta_p)^s u_1 \ge 0, \; (-\Delta_p)^s v_1 \ge 0, \text{ in } \Omega, \; u_1, v_1 = 0 \text{ in } \rd \setminus \Omega. 
\end{align*}
Then, the strong maximum principle of $(-\Delta _p)^s$ (see \cite[Theorem A.1]{BrFr}), infers either $u_1 \equiv 0$ or $u_1>0$ a.e. in $\Omega$. Similarly, either $v_1 \equiv 0$ or $v_1>0$ a.e. in $\Omega$. 
\end{remark}

Now we define the ground state solution of \eqref{system_eqn_homogenous} up to the normalization as mentioned in Remark \ref{remark-1}. 

\begin{definition}
A pair $(u_0, v_0)$ is called a `ground state solution' to \eqref{system_eqn_homogenous}, if it minimizes $S_{\al, \be}$.
\end{definition}

\begin{proposition}\label{existence&uniqueness}
 Let $\Omega$ be a bounded open set containing zero, $0\le m \le sp$, $p \le q \le p^*_s(m)$, and $\al, \be > 1$ be such that $\al + \be = q$. Then the following hold: 
 \begin{enumerate}
     \item[\rm{(i)}]  If $q<\psm$, then \eqref{system_eqn_homogenous} admits a ground state solution. 
     \item[\rm{(ii)}] If $q = p^*_s$, then \eqref{system_eqn_homogenous} does not admit any ground state solution.
 \end{enumerate}
\end{proposition}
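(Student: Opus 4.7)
The plan is to apply the direct method. Take a minimizing sequence $\{(u_n, v_n)\} \subset \W$ for $S_{\al, \be}$ normalized so that $\int_\Om |u_n|^\al |v_n|^\be |x|^{-m}\dx = 1$ and $\|(u_n, v_n)\|_\W^p \ra S_{\al, \be}$; since the sequence is bounded in the reflexive space $\W$, it admits a subsequence converging weakly to some $(u_0, v_0) \in \W$. The decisive ingredient is the \emph{compact} embedding $\wps \embd L^q(\Om, |x|^{-m}\dx)$, valid in the strict subcritical range $q < \psm$, which follows from the fractional Hardy inequality coupled with a Rellich--Kondrachov-type truncation argument away from the Hardy singularity at the origin. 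This compactness gives strong convergence of $u_n, v_n$ to $u_0, v_0$ in $L^q(\Om, |x|^{-m}\dx)$; H\"older's inequality with exponents $q/\al, q/\be$ then lets the constraint pass to the limit, so $\int_\Om |u_0|^\al |v_0|^\be |x|^{-m}\dx = 1$ and in particular $(u_0, v_0) \ne (0,0)$. Weak lower semicontinuity of $[\,\cdot\,]_{s,p}^p$ finally forces $(u_0, v_0)$ to attain $S_{\al, \be}$.

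\textbf{Part (ii).} When $q = p_s^*$, the constraint $q \le \psm$ forces $m = 0$. The plan is to pin down the exact value of $S_{\al, \be}$ in terms of $S_{\al+\be} = S_{p_s^*}$ and then run a rigidity argument. For the upper bound, test the Rayleigh quotient on pairs of the form $(w, tw)$ with $w \in \wps$ and optimize in $t > 0$: the minimum occurs at $t_* = (\be/\al)^{1/p}$, producing
\[ S_{\al, \be} \le \frac{q}{\al^{\al/q}\be^{\be/q}} \, S_{\al+\be}. \]
For the matching lower bound, chain three inequalities on a generic $(u,v) \in \W$: (a) H\"older with exponents $q/\al, q/\be$ to bound $\int_\Om |u|^\al |v|^\be \dx$ by $(\int_\Om |u|^q\dx)^{\al/q}(\int_\Om |v|^q\dx)^{\be/q}$; (b) the scalar Sobolev inequality to bound each $L^q$ factor by the corresponding Gagliardo seminorm via $S_{\al+\be}$; (c) the weighted AM-GM inequality $a + b \ge \frac{q}{\al^{\al/q}\be^{\be/q}}\, a^{\al/q} b^{\be/q}$ (with equality at $b/a = \be/\al$) applied to $a = [u]_{s,p}^p$, $b = [v]_{s,p}^p$. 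These chain to the reverse bound, giving $S_{\al, \be} = \frac{q}{\al^{\al/q}\be^{\be/q}}\, S_{\al+\be}$.

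To conclude nonexistence, suppose $(u_0, v_0) \in \W$ attains $S_{\al, \be}$; by Remark~\ref{remark-1} one may assume $u_0, v_0 > 0$. Equality must hold simultaneously in (a), (b), and (c). Equality in (a) forces $|u_0|^q$ and $|v_0|^q$ to be proportional a.e., hence $v_0 = c\, u_0$ for some $c > 0$; equality in (b) forces $u_0$ (and $v_0$) to attain $S_{p_s^*}$ on $\wps$; and equality in (c) fixes $c^p = \be/\al$. However, $S_{p_s^*}$ is not attained on any bounded domain $\Om$, since $S_{p_s^*}(\Om) = S_{p_s^*}(\rd)$ while the extremals on $\rd$ (fractional Aubin--Talenti-type profiles) do not have compact support---a standard obstruction in the critical fractional $p$-Laplacian setting. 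This contradiction proves nonexistence.

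\textbf{Main obstacle.} The crux is the sharp identity $S_{\al, \be} = \frac{q}{\al^{\al/q}\be^{\be/q}}\, S_{\al+\be}$ together with the careful alignment of equality cases in (a), (b), and (c) simultaneously; it is this rigidity---rather than merely the numerical value of $S_{\al,\be}$---that reduces nonexistence of ground states for the system to the classical nonexistence of extremals for the scalar critical Sobolev embedding. A secondary subtlety is justifying the compact embedding used in part (i), for which one typically splits the integral into a neighborhood of the origin (controlled by the fractional Hardy inequality) and its complement (controlled by the standard fractional compact Sobolev embedding).
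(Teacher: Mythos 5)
Your part (i) is correct and is essentially the paper's own argument: the paper proves weak continuity of $G$ (Proposition \ref{compact&existence}) by exactly the compactness you invoke for the embedding $\wps \embd L^q(\Omega,|x|^{-m}\dx)$ in the range $q<\psm$ (the paper interpolates via the Hardy inequality and the compact embedding into a subcritical $L^\sigma$; your splitting near/away from the origin is an equivalent standard route), and then runs the same direct method.

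Your part (ii) is correct but follows a genuinely different route. The paper (Proposition \ref{not_attend}) notes that $S_{\al,\be}$ is translation/scaling invariant, hence independent of the domain, extends a putative minimizer by zero to $\rd$, and applies the strong maximum principle to the resulting nonnegative system solution to contradict its vanishing on $\rd\setminus\Omega$. You instead prove the sharp identity $S_{\al,\be}=\bigl((\al/\be)^{\be/q}+(\be/\al)^{\al/q}\bigr)S_{\al+\be}=\frac{q}{\al^{\al/q}\be^{\be/q}}S_{\al+\be}$ (your constant agrees with the paper's Proposition \ref{relation1}, which the paper states without proof) and then exploit the equality cases: in fact only equality in your step (b) is needed, since it already forces $u_0$ (and $v_0$) to be an extremal of the scalar critical Sobolev quotient on $\wps$, and scalar nonattainment on a bounded domain gives the contradiction. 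What your approach buys is a clean reduction of the system nonexistence to the scalar one, together with a self-contained proof of the constant identity; what the paper's approach buys is that it never needs the scalar nonattainment as an input. The one point you should not leave as a bare assertion is precisely that input: the claim that $S_{p^*_s}$ is not attained on a bounded $\Omega$ is standard, but its standard proof is the same extension-plus-strong-maximum-principle mechanism the paper applies directly at the system level (exact Aubin--Talenti profiles are not available for the fractional $p$-Laplacian, so "extremals have full support" should be justified by the strong maximum principle or by known positivity/decay results for minimizers on $\rd$, with a citation). With that reference supplied, your argument is complete.
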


For the existence of a ground state solution, we show that the following map 
\begin{align*}
    G(u,v) :=  \int_{\Omega} \frac{\abs{u(x)}^{\al} \abs{v(x)}^{\be}}{\abs{x}^m} \dx, \; \forall \, (u,v) \in \W,
\end{align*}
is weakly continuous on $\W$. However, when $q= p^*_s(m)$, the map $G$ fails to be weakly continuous due to the lack of compactness in the embedding $\W \hookrightarrow L^{\psm}(\Omega, \abs{x}^{-m}) \times L^{\psm}(\Omega, \abs{x}^{-m})$. This necessitates a deeper understanding of the behaviour of weakly convergent sequences within the space of measures. In this paper, we address this by proving a concentration compactness principle associated with \eqref{system_eqn_homogenous}, capturing the limiting behaviour of such sequences (see Proposition \ref{CCP}).  

\medskip

Next, for $q= \psm$, we study the existence of nontrivial solutions for \eqref{system_eqn}, 
which is the system counterpart of the following scalar equation
\begin{equation}
\left\{\begin{aligned}\label{scaler_equation}
&(-\Delta_p)^s u = \eta \abs{u}^{r-2}u + \gamma \frac{\abs{u}^{p^*_s(m)-2}u}{\abs{x}^m} \;\;\text{in}\;\Omega,\\
&u=v=0\, \mbox{ in }\mathbb{R}^d\setminus \Omega.
\end{aligned}
\right.
\end{equation}
In \cite{chen20183065}, Chen et al. proved that \eqref{scaler_equation} admits a positive solution with the least energy, which depends on the values of $\eta, \gamma$. Additionally, they provided the existence of a sign-changing solution for \eqref{scaler_equation}. In the literature, various authors have studied the existence of solutions to elliptic systems with fractional $p$-Laplacian. In \cite{ChSq}, Chen and Squassina considered the following concave-convex system of equations on a bounded open set $\Omega$:
\begin{equation}\label{Chen-Squassina}
\left\{\begin{aligned}
&(-\Delta_p)^s u = \eta_1 \abs{u}^{q-2}u + \frac{\al}{p_{s}^{*}} |u|^{\alpha-2}u|v|^{\beta} \;\;\text{in}\;\Omega, \\
&(-\Delta_p)^s v = \eta_2 \abs{v}^{q-2}v + \frac{\beta}{p^{*}_{s}} |v|^{\beta-2}v|u|^{\alpha}\;\;\text{in}\;\Omega, \\
&u=v=0\;\;\text{in}\; \rd \setminus \Omega,
\end{aligned}
\right.
\end{equation}
where $\eta_1, \eta_2>0$, $q \in (1,p)$ and $\al+\be= p^*_s$. Employing the Nehari manifold technique, it is shown that \eqref{Chen-Squassina} admits at least two non semi-trivial solutions provided $\eta_1, \eta_2$ have certain upper bounds and  $q, N$ adhere to specific constraints depending on $p,s$ (see \cite[Theorem 1.2]{ChSq}). In the critical case $r=p^*_s$ and for $\Omega=\R^d$, Shen in \cite{Shen2021} proved the existence of a solution for \eqref{system_eqn} with general nonlinearities, and Bhakta et al. in \cite[Theorem 1.5]{MoKaFi} obtained the existence of a positive solution for \eqref{system_eqn} with the least energy, under the assumption that $m=0$ and under certain restrictions on $d,p,s, \al, \be$. For a similar study on nonlinear systems with multiple critical nonlinearities, we refer the reader to \cite{GiPaSr, assunccao2024, Hsu} and references therein.  

\medskip
We say a pair $(u,v) \in \mathcal{W}$ is a weak solution of \eqref{system_eqn}, if it satisfies the following identity for every $(\phi, \psi) \in \mathcal{W}$:
\begin{align*}
    &\iint\limits_{\rd\times\rd}\frac{|u(x)-u(y)|^{p-2}(u(x)-u(y))(\phi(x)-\phi(y))}{|x-y|^{d+sp}} \dx \dy \no \\ 
   &+\iint\limits_{\rd\times\rd}\frac{|v(x)-v(y)|^{p-2}(v(x)-v(y))(\psi(x)-\psi(y))}{|x-y|^{d+sp}} \dx \dy \no \\ 
   &=\gamma \frac{\al}{p^*_s(m)} \int_{\Omega} \frac{\abs{u}^{\al-2}u\abs{v}^{\beta}}{\abs{x}^m} \phi \dx + \gamma \frac{\be}{p^*_s(m)} \int_{\Omega} \frac{\abs{v}^{\be-2}v\abs{u}^{\al}}{\abs{x}^m} \psi \dx \no \\
   &+\eta \int_{\Omega} \abs{u}^{r-2}u \phi + \eta \int_{\Omega} \abs{v}^{r-2}v \psi.
\end{align*}
We note that system \eqref{system_eqn} is variational, and the underlying functional is
\begin{equation}\label{energy-main}
\begin{split}
I(u,v):= \frac{1}{p}\|(u,v)\|^p_{\mathcal{W}}
&-\frac{\gamma}{p^*_s(m)}\int_{\Omega} \frac{|u|^{\alpha}|v|^{\beta}}{\abs{x}^m} \dx - \frac{\eta}{r} \int_{\Omega} \left( \abs{u}^r + \abs{v}^r \right) \dx, \; \forall \, (u,v) \in \W, 
 \end{split}
\end{equation}
which is of class $\C^1(\mathcal{W})$. Moreover, if $(u,v)$ is a weak solution of~\eqref{system_eqn}, then $(u,v)$ is a critical point of $I$ and vice versa.

\medskip 
In this paper, we extend the result of \cite[Theorem 1.2]{chen20183065} in the context of the system. Specifically, we prove the existence of a nontrivial solution with the least energy for \eqref{system_eqn}. Moreover, we prove that the nontrivial solution is indeed non semi-trivial. To prove the non semi-triviality, we note that the approach used in \cite{ChSq}, which relies on the presence of a concave nonlinearity, does not apply to our setting. Instead, we employ an alternative approach based on the properties of the Nehari manifold. 

\medskip
We consider the following quantity
\begin{align}\label{c0}
c_0:=\left(\int_{\Omega}|x|^{\frac{pm}{p_{s}^{*}(m)-p}}\right)^{\frac{p_{s}^{*}(m)-p}{p_{s}^{*}(m)}}.    
\end{align}
Recall the first Dirichlet eigenvalue for $(-\Delta _p)^s$ (see \cite{FrPa}):
\begin{equation}\label{lm_1}
\lambda_1 := \inf_{u\in \wps\setminus\{0\}}\frac{[u]_{s,p}^p}{\|u\|^{p}_{L^{p}(\Omega)}}>0.\end{equation} 
Now, we are in a position to state our result.  
\begin{theorem}\label{existence-main}
Let $\Omega$ be a bounded open set containing zero, $0 \leq m \le sp$, $p \le r < p^*_s$, and $\max \left\{r, p_{s}^{*}(m)\right\}>p$. Let $\al,\be$ be such that $\al + \be = p^*_s(m)$.  Choose $\eta, \gamma>0$ such that
\begin{align*}
    &\eta < \min\left\{\lambda_1, \frac{S_{\al+\be}}{c_{0}}\right\}, \text{ when } m<sp, r=p, \text{ and } \gamma < S_{\al, \be}, \text{ when } m=sp, r>p.
\end{align*}
Then the following hold:
\begin{enumerate}
    \item[\rm{(i)}] \eqref{system_eqn} admits a nontrivial weak solution $(\tilde{u}, \tilde{v})$ with the least energy provided
\begin{align*}
\begin{cases}r \geq p, & \text { if } d \geq p^{2} s; \\ r>p^{*}-p^{\prime}, & \text { if } d<p^{2} s.\end{cases}
\end{align*}
\item[\rm{(ii)}] Further, there exists $0<\eta^* < \min\{\lambda_1, \frac{S_{\al+\be}}{c_{0}}\}$ such that for every $\eta \in (0, \eta^*)$, $(\tilde{u}, \tilde{v})$ is non semi-trivial, i.e., $(\tilde{u}, \tilde{v}) \neq (\tilde{u},0)$ or  $(\tilde{u}, \tilde{v}) \neq (0,\tilde{v})$ a.e. in $\Omega$.
\end{enumerate}
\end{theorem}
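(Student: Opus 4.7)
The plan is to realize $(\tilde u, \tilde v)$ as a minimizer of $I$ restricted to the Nehari manifold
\[
\mathcal{M} := \{(u,v) \in \W \setminus \{(0,0)\} : \langle I'(u,v),(u,v)\rangle = 0\},
\]
which by standard Nehari arguments coincides with the mountain-pass value of $I$. First I would verify the mountain-pass geometry: the Hardy-Sobolev inequality encoded by $S_{\alpha+\beta}$, together with H\"older applied to the weighted lower-order piece (producing the constant $c_0$ of \eqref{c0}), shows that the hypotheses $\eta < \min\{\lambda_1, S_{\alpha+\beta}/c_0\}$ (in the regime $r=p$, $m<sp$) and $\gamma < S_{\alpha,\beta}$ (in the regime $m=sp$, $r>p$) make the pure Gagliardo part of $I$ strictly dominate the remaining pieces on a sufficiently small sphere in $\W$. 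A descending direction is produced by scaling a pair of the form $(u, \kappa u)$ with $\kappa>0$ chosen to realize $S_{\alpha, \beta}$. This yields a positive mountain-pass level $c$ equal to $\inf_{\mathcal{M}} I$.

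Next I would address compactness. A Palais-Smale sequence $(u_n, v_n)$ at level $c$ is bounded in $\W$ by the usual Ambrosetti-Rabinowitz-type combination of $I(u_n, v_n) \to c$ with $\langle I'(u_n, v_n), (u_n, v_n)\rangle = o(1)$, using $p \leq r < p_s^*$. Passing to a weak limit $(\tilde u, \tilde v) \in \W$, which solves \eqref{system_eqn} weakly, and applying the system concentration-compactness principle (Proposition \ref{CCP}) to the defect measures of $(u_n - \tilde u, v_n - \tilde v)$, each possible atomic part $\nu_i$ of the critical measure satisfies $\mu_i \geq S_{\alpha,\beta}\, \nu_i^{p/p_s^*(m)}$. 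A direct Brezis-Lieb bookkeeping then shows that each surviving atom contributes at least the threshold
\[
c^* := \frac{sp-m}{p(d-m)}\, S_{\alpha,\beta}^{(d-m)/(sp-m)}
\]
(up to a power of $\gamma$) to the limiting energy. Hence once one proves the strict inequality $c < c^*$, no atom can form, the sequence converges strongly, and $(\tilde u, \tilde v)$ is the desired least-energy critical point.

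The strict inequality $c < c^*$ is established by a sharp test-function computation. I would take a rescaled, truncated extremal $U_\varepsilon \in W_0^{s,p}(\Omega)$ concentrating at the origin that asymptotically saturates $S_{\alpha+\beta}$, form the aligned pair $(U_\varepsilon, \kappa U_\varepsilon)$ (with $\kappa^p = \beta/\alpha$) that saturates $S_{\alpha,\beta}$, and compute $\sup_{t > 0} I(tU_\varepsilon, t\kappa U_\varepsilon)$. Expanding in $\varepsilon$, the bulk contribution equals $c^*$ exactly; a positive correction from truncation of the Gagliardo seminorm must be beaten by the subcritical gain $-\eta \|U_\varepsilon\|_{L^r}^r$. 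The dimensional dichotomy in part (i) arises precisely here: the asymptotics of $\|U_\varepsilon\|_{L^r}^r$ dominate the correction for any $r \geq p$ when $d \geq p^2 s$, but only for $r > p^* - p'$ in the borderline regime $d < p^2 s$.

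For part (ii), suppose for contradiction that the least-energy pair is semi-trivial, say $(\tilde u, 0)$. Then the first equation of \eqref{system_eqn} reduces (since $\beta > 1$ forces the cross-term $|\tilde v|^\beta$ to vanish) to $(-\Delta_p)^s \tilde u = \eta |\tilde u|^{p-2}\tilde u$, whose only solution under $\eta < \lambda_1$ is $\tilde u \equiv 0$, contradicting the nontriviality from part (i). A quantitative comparison sharpens this: plugging the positive pair $(u_1, v_1)$ from Remark \ref{remark-1}, appropriately rescaled onto $\mathcal{M}$, into $I$ produces a non semi-trivial Nehari value that, as $\eta \to 0^+$, lies strictly below the semi-trivial level coming from the scalar problem in \cite{chen20183065}. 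Choosing $\eta^*$ small enough preserves this strict inequality for $\eta \in (0, \eta^*)$. The principal obstacle throughout is the test-function computation in Step 3, particularly in the borderline regime $d < p^2 s$, where sharp asymptotics of fractional extremals are delicate and the subcritical gain only just dominates the Gagliardo correction, forcing the restrictive hypothesis $r > p^* - p'$.
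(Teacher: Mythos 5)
Your part (i) is essentially the paper's argument: mountain-pass geometry, a Palais--Smale condition below the threshold $\tilde c$, the concentrating truncated extremal pair $(\al^{1/p}u_{m,\var,1},\be^{1/p}u_{m,\var,1})$ to push the min-max level strictly below $\tilde c$ (this is where the dichotomy $d\ge p^2s$ versus $r>p_s^*-p'$ enters), and the Nehari fibering argument identifying the mountain-pass value with $\inf_{\n}I$. Two minor deviations: the paper verifies the PS condition directly via Brezis--Lieb and the definition of $S_{\al,\be}$ (Lemma \ref{ps1}) rather than through Proposition \ref{CCP}, and your atom-threshold $c^*$ degenerates when $m=sp$ (then $p_s^*(m)=p$ and the exponent $\frac{d-m}{sp-m}$ blows up); in that case compactness must be obtained, as in Lemma \ref{ps1}(iii), from $\gamma<S_{\al,\be}$ alone, with no level restriction.

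The genuine gap is part (ii). Your contradiction argument reduces a semitrivial solution $(\tilde u,0)$ to $(-\Delta_p)^s\tilde u=\eta|\tilde u|^{p-2}\tilde u$ and invokes $\eta<\lambda_1$; this is only correct when $r=p$. For $r>p$ (a case the theorem covers) the reduced equation is $(-\Delta_p)^s\tilde u=\eta|\tilde u|^{r-2}\tilde u$, which has nontrivial solutions for every $\eta>0$, so semitrivial critical points of $I$ do exist and cannot be excluded this way. Your fallback ``quantitative comparison'' also does not work as stated: the pair $(u_1,v_1)$ of Remark \ref{remark-1} is a minimizer of $S_{\al,\be}$, which in the critical case $\al+\be=p_s^*(m)$ is not available on the bounded domain $\Omega$ (Proposition \ref{not_attend} for $m=0$; not established for $m>0$), and the relevant semitrivial level is not that of the critical scalar problem of \cite{chen20183065}: once one component vanishes, the Hardy-critical coupling term disappears entirely (since $\al,\be>0$), so semitrivial solutions solve the purely subcritical scalar equation. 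The missing quantitative input, which is exactly what the paper supplies in Lemma \ref{aux_lem_1-1.1} and Proposition \ref{submain-2}, is that $\theta(\eta)\to\tilde c$ as $\eta\to0$ (proved with the same concentrating truncated extremals as in Lemma \ref{ps2}, not with a minimizer of $S_{\al,\be}$), whence, along the bounded family of Nehari minimizers, $\eta\int_\Omega(|u_\eta|^r+|v_\eta|^r)\,{\rm d}x\to 0$ and therefore $\gamma\int_\Omega|u_\eta|^{\al}|v_\eta|^{\be}|x|^{-m}\,{\rm d}x\to\tilde c\,\bigl(\tfrac1p-\tfrac1{p_s^*(m)}\bigr)^{-1}>0$, which forces both components to be nonzero for small $\eta$. (Alternatively, for $r>p$ one could note that the semitrivial Nehari level blows up like $\eta^{-p/(r-p)}$ while the ground-state level stays below $\tilde c$, but neither this nor the paper's argument appears in your sketch.)
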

We first demonstrate the existence of a nontrivial solution using the Mountain Pass theorem. Subsequently, we introduce the Nehari manifold to show that the solution obtained via the Mountain Pass Theorem attains the least energy. Adopting similar variational arguments for $I_+$ (defined in \eqref{I+}) as used in the preceding theorem, we get the existence of a positive solution for \eqref{system_eqn} (see Remark \ref{pos}).

\medskip
Next, we study the multiplicity of non-trivial solutions for \eqref{system_eqn} employing Ljusternik-Schnirelmann category theory (see \cite{palais}, \cite[Chapter 5]{willem} or \cite[Chapter 9]{AmMa}). Recall that the Ljusternik-Schnirelmann category $\text{cat}_X(A)$ of a closed subset $A$ of a topological space $X$ is the least integer $n$ such that there exists a covering of $A$ by $n$ closed sets contractible in $X$. In the classical paper \cite{BeCe}, Benci and Cerami proved the existence of $\text{cat}_\Om(\Om)$ distinct critical points for a Brezis-Nirenberg type subcritical problem associated with the Laplace operator. Later, this theory has been used in various critical problems. To name a few, Alves and Ding \cite{AlDi} used the LS category theory to study the multiplicity of positive solutions of the following local problem
$$-\Delta_pu=\mu u^{q-1}+u^{p^*-1}\text{ in } \Om,\quad  2\leq p\leq q<p^*=\frac{dp}{d-p}.$$
In \cite{FiMoSe}, Figueiredo et al. used the LS category theory for the following fractional problem 
$$(-\Delta)^su=\mu |u|^{q-2}u+|u|^{2^*_s-2}u \text{ in } \Om,\,u=0 \text{ in } \rd \setminus \Omega,\quad  2\leq q<2_s^*=\frac{2d}{d-2s}.$$
Recently, da Silva et al. \cite{DFV} studied the multiplicity of solutions of a mixed local-nonlocal quasilinear problem:
$$-\Delta_p u+(-\Delta_p)^s u=\la|u|^{q-2}u+|u|^{p^*-2}u \text{ in }\Om,\; u=0\text{ in }\rd \setminus \Omega, \quad 2\leq p<q<p^*=\frac{dp}{d-p},$$ by using the LS category theory.
A similar multiplicity result is recently proved in \cite{EcMoMoRa} for a fractional $p$-Laplacian system with concave nonlinearities. For $m=0$, the following theorem states category many nontrivial solutions for \eqref{system_eqn}.

\begin{theorem}\label{cat_theorem}
    Let $\Omega$ be a bounded open set containing zero and $m=0$. Let $2\leq p\leq r<p_s^*$, and $\al,\be\geq2$ be such that $\al + \be = p^*_s$. Let $\gamma>0$, $r$ be as in Theorem \ref{existence-main} and $c_0$ be as in \eqref{c0}. Then there exists $\eta_*>0$ satisfying 
    $$\eta_*<\min\left\{\frac{\la_1}{2},\frac{S_{\al+\be}}{c_0}\right\}, \text{ when } r=p,$$
    such that for every $\eta\in(0,\eta_*)$, \eqref{system_eqn} has at least $\text{cat}_\Om(\Om)$ nontrivial weak solutions. 
\end{theorem}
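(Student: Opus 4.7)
The plan is to follow the Benci--Cerami strategy as adapted by Alves--Ding and later authors to fractional quasilinear problems, working on the Nehari manifold
\[
\mathcal{N} := \big\{(u,v) \in \mathcal{W}\setminus\{(0,0)\} : \langle I'(u,v),(u,v)\rangle = 0\big\}
\]
associated to the functional $I$ in \eqref{energy-main}. Since $p \le r < p_{s}^{*}$ and $\alpha+\beta=p_s^{*}$, standard fibering arguments show that $\mathcal{N}$ is a natural $C^{1}$ constraint, that every $(u,v)\neq (0,0)$ has a unique projection $t(u,v)(u,v)\in\mathcal{N}$, and that critical points of $I|_{\mathcal{N}}$ coincide with free critical points of $I$. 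The first step is to prove a local Palais--Smale property: any PS sequence for $I|_{\mathcal{N}}$ at level $c$ with
\[
c < c^{*} := \frac{s}{d}\, \gamma^{-\frac{d-sp}{sp}}\, S_{\alpha,\beta}^{\frac{d}{sp}}
\]
is precompact in $\mathcal{W}$. This will use the concentration-compactness principle of Proposition~\ref{CCP} to rule out concentration at the threshold, combined with a Brezis--Lieb lemma for the Gagliardo seminorm.

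Next, for every $y \in \Omega$ I will construct a test pair $\Phi_{\epsilon}(y)\in\mathcal{N}$ by concentrating an optimizer of $S_{\alpha,\beta}$ in $\mathbb{R}^{d}$ at the point $y$. By Remark~\ref{remark-1} and a standard scalar reduction, such an optimizer may be written $(\tau_{\alpha}U,\tau_{\beta}U)$ for positive constants $\tau_{\alpha},\tau_{\beta}$ determined by $\alpha,\beta$, where $U$ is an extremal for $S_{\alpha+\beta}$ on $\mathbb{R}^{d}$. Setting
\[
U_{\epsilon,y}(x):= \epsilon^{-\frac{d-sp}{p}} U\!\big((x-y)/\epsilon\big)\chi(x-y)
\]
with a smooth cutoff $\chi$ supported in a small ball contained in $\Omega$, and projecting $(\tau_{\alpha}U_{\epsilon,y},\tau_{\beta}U_{\epsilon,y})$ onto $\mathcal{N}$, I obtain $\Phi_{\epsilon}(y)$. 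The key energy estimate
\[
\sup_{y \in \Omega} I\!\left(\Phi_{\epsilon}(y)\right) < c^{*}, \qquad \epsilon,\eta \text{ sufficiently small},
\]
is the step I expect to be the principal obstacle: in the absence of a closed-form expression for the fractional $p$-extremal $U$, one must invoke the asymptotic decay estimates of Brasco--Mosconi--Squassina to control the interaction term and the critical coupling $\int|\Phi_{\epsilon}(y)|^{\alpha}|\Phi_{\epsilon}(y)|^{\beta}$ up to leading order. When $r>p$ the correction $-\eta(\|u\|_{r}^{r}+\|v\|_{r}^{r})$ produces the required strict decrease through a Brezis--Nirenberg-type expansion; when $r=p$ one uses $\eta<\lambda_{1}/2$ and $\eta<S_{\alpha+\beta}/c_{0}$ to absorb the $L^{p}$ term into the leading Sobolev energy.

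Finally, I introduce the barycenter map
\[
\mathfrak{b}(u,v) := \frac{\int_{\mathbb{R}^{d}} x\,\big(|u(x)|^{p_{s}^{*}}+|v(x)|^{p_{s}^{*}}\big)\dx}{\int_{\mathbb{R}^{d}} \big(|u(x)|^{p_{s}^{*}}+|v(x)|^{p_{s}^{*}}\big)\dx},
\]
defined on the sublevel set $\mathcal{N}^{c^{*}} := \{(u,v)\in\mathcal{N} : I(u,v)\le c^{*}\}$. Fix $\delta>0$ so that the tubular neighbourhood $\Omega_{\delta}:=\{x\in\mathbb{R}^{d} : \mathrm{dist}(x,\Omega)\le\delta\}$ deformation retracts onto $\Omega$. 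Using Proposition~\ref{CCP}, a contradiction argument shows that any $(u,v)\in\mathcal{N}^{c^{*}}$ concentrates near a single point of $\overline{\Omega}$, so $\mathfrak{b}(\mathcal{N}^{c^{*}})\subset\Omega_{\delta}$, and a direct computation gives $\mathfrak{b}(\Phi_{\epsilon}(y))\to y$ uniformly in $y\in\Omega$ as $\epsilon\to 0$. Hence $\mathfrak{b}\circ\Phi_{\epsilon}$ is homotopic to the inclusion $\Omega\hookrightarrow\Omega_{\delta}$, and the abstract category inequality yields
\[
\mathrm{cat}\!\left(\mathcal{N}^{c^{*}}\right)\ge \mathrm{cat}_{\Omega_{\delta}}(\Omega)=\mathrm{cat}_{\Omega}(\Omega).
\]
Combined with the local PS condition, the Ljusternik--Schnirelmann theorem (e.g. \cite{palais}) produces at least $\mathrm{cat}_{\Omega}(\Omega)$ critical points of $I|_{\mathcal{N}}$, i.e. nontrivial weak solutions of \eqref{system_eqn}. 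The constant $\eta_{*}$ is taken as the minimum of the thresholds arising in the local PS step, in the strict energy estimate, and in the concentration step.
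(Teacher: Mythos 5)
Your proposal follows the same Benci--Cerami ``photography'' strategy that the paper uses (Nehari manifold, local PS condition below the threshold, a map from the domain into a low sublevel set of $I|_{\n_\eta}$, a barycenter map back into a neighbourhood of $\Om$, and Ljusternik--Schnirelmann theory); your threshold $c^*$ coincides with the paper's $\tilde c$. The implementation differs in three respects. First, where you build $\Phi_\ep(y)$ from truncated optimizers concentrated at each $y$ and must prove a uniform-in-$y$ strict energy estimate via decay asymptotics of the fractional $p$-extremal, the paper instead translates a single radial least-energy solution of the system on a small ball $B_\tau\subset\Om$ (the map $\zeta_\tau$ defined on $\Om_\tau^-$), which automatically lies on $\n_\eta$ with energy equal to the ball least energy $\theta(\eta)<\tilde c$; this avoids uniform bubble estimates at the price of the auxiliary limits $\theta(\eta)\to\tilde c$ (Lemma \ref{aux_lem_1-1.1}) and Lemma \ref{aux_lem_2}. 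Second, for the barycenter containment the paper does not invoke Proposition \ref{CCP} but the rescaling compactness of almost-minimizers of $S_{\al,\be}$ (Proposition \ref{par_the_rescale}), and it weights the barycenter by the coupling density $|u|^{\al}|v|^{\be}$ rather than $|u|^{p_s^*}+|v|^{p_s^*}$; both choices can be made to work, but note that the containment statement is only true in the regime $\eta\to0$ (for a fixed $\eta$ the sublevel set contains plenty of non-concentrating elements), so the smallness of $\eta$ must enter precisely at this step, as in Lemma \ref{aux_lem_3}. Third, the reduction of constrained critical points to free ones, which you attribute to ``standard fibering arguments,'' is carried out in the paper via the sign of $\langle\psi_\eta'(u,v),(u,v)\rangle$ on $\n_\eta$ and the vanishing of the Lagrange multipliers (Lemma \ref{ps4} and the final argument); this does need $r\ge p$ and, when $r=p$, the bound $\eta<\la_1$.

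Two points in your write-up need repair, though both are standard. (1) You apply the Ljusternik--Schnirelmann theorem on the closed sublevel set $\{(u,v)\in\n_\eta: I(u,v)\le c^*\}$ at exactly the threshold level, where the PS condition is not available (this is precisely the level at which compactness can fail by concentration); the abstract theorem requires PS at all levels up to the one defining the sublevel set. You should instead run the category argument at a level strictly below $c^*$, e.g. at $\sup_y I(\Phi_\ep(y))$, or, as the paper does, at $\theta(\eta)<\tilde c$. (2) Your bubbles are ``supported in a small ball contained in $\Om$,'' yet you take the supremum and the homotopy over all $y\in\Om$; near $\pa\Om$ no such ball fits, so $\Phi_\ep$ can only be defined on an inner set $\Om_\tau^-$, and one recovers the conclusion through the homotopy equivalences of $\Om_\tau^-$ and $\Om_\tau^+$ with $\Om$, giving $\mathrm{cat}\ge\mathrm{cat}_{\Om_\tau^+}(\Om_\tau^-)=\mathrm{cat}_\Om(\Om)$, exactly as in Lemma \ref{lemma-cat}. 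With these adjustments your outline is a viable alternative proof.
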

\begin{remark}
    If $\Om$ is contractible in itself, then the above theorem doesn't give any extra information about the multiplicity of solutions. However if $\Om$ is an annular region then $\text{cat}_\Om(\Om)=2$. In that case, the above theorem yields at least two distinct nontrivial weak solutions of \eqref{system_eqn}. In short, the above theorem says that if $\Om$ is topologically rich, then we get more information about the multiplicity of weak solutions of \eqref{system_eqn}.
    \end{remark}
The rest of the paper is structured as follows: Section \ref{exst_unq_grnd} is dedicated to proving Proposition \ref{existence&uniqueness}. In this section, we also discuss various properties of the minimizer for $S_{\al,\be}$. Section \ref{conc_comp} contains the study of the concentration compactness principle associated with \eqref{system_eqn_homogenous}. In Section \ref{exst_nontriv}, we verify the Palais-Smale condition, analyze the mountain pass geometry, and leverage these results to prove Theorem \ref{existence-main}. Finally, in the last section, we establish the multiplicity results in Theorem \ref{cat_theorem}.

\medskip
\noi \textbf{Notation:} We use the following notation and convention:
\begin{enumerate}
    \item[(i)] $C$ denotes a generic positive constant.
    \item[(ii)] For $a \in (1, \infty)$, $a' := \frac{a}{a-1}$ is the conjugate exponent of $a$. 
    \item[(iii)] $\omega_d$ represents the measure of a unit ball in $\rd$.
    \item[(iv)] $\W(U):=W_0^{s,p}(U)\times W_0^{s,p}(U)$, for any open set $U\subset\R^d$ and we reserve the notation $\W$ for $\W(\Om)$. Also, $\mathcal{W}(\rd) := \mathcal{D}^{s,p}(\rd) \times \mathcal{D}^{s,p}(\rd)$.
    \item[(v)]  $\dX := \dx \abs{x}^{-m}$.
    \item[(vi)] $B_{r}(x)$ denotes a ball of radius $r$ in $\rd$ with centre at $x$.
\end{enumerate}

\section{Existence and Non-existence of ground state solutions}\label{exst_unq_grnd}
Recall the Hardy-Sobolev inequality (see \cite[Theorem 1.1]{FrSe}):
\begin{align}\label{HS}
    C(d,s,p) \int_{\Omega} \frac{\abs{u(x)}^p}{\abs{x}^{sp}} \dx \le \iint\limits_{\rd \times \rd}\frac{|u(x)-u(y)|^p}{|x-y|^{d+sp}}\,\dx\dy, \; \forall \, u \in \wps.
\end{align}
Next, we recall the following fractional Hardy Sobolev inequality is the interpolation between \eqref{HS} and the critical Sobolev inequality (for $m=0$):
\begin{equation}\label{HS1}
    C(d,s,p,m) \left( \int_{\Omega} \frac{\abs{u(x)}^{p^*_s(m)}}{\abs{x}^m} \dx \right)^{\frac{p}{p^*_s(m)}} \le  \iint\limits_{\rd \times \rd}\frac{|u(x)-u(y)|^p}{|x-y|^{d+sp}}\,\dx\dy, \; \forall \, u \in W_0^{s,p}(\Omega).  
\end{equation}
For $q \in [p, \psm]$, using \eqref{HS1} in \cite[Lemma 2.3]{chen20183065} the authors verified the following inequality: 
\begin{align}\label{HS1.5}
    C(d,s,p,m) \left( \int_{\Omega} \frac{\abs{u(x)}^q}{\abs{x}^m} \dx \right)^{\frac{p}{q}} \le  \iint\limits_{\rd \times \rd}\frac{|u(x)-u(y)|^p}{|x-y|^{d+sp}}\,\dx\dy, \; \forall \, u \in W_0^{s,p}(\Omega). 
\end{align}
For $\al+\be = q$, using the H\"{o}lder's inequality with the conjugate pair $(\frac{q}{\al}, \frac{q}{\be})$ and \eqref{HS1.5} we get the following estimate for all $(u,v) \in \W$:
\begin{align}\label{HS2}
    &\int_{\Omega} \frac{\abs{u(x)}^{\al} \abs{v(x)}^{\be}}{\abs{x}^m} \dx = \int_{\Omega} \frac{\abs{u(x)}^{\al}}{\abs{x}^{\frac{m \al}{q}}}  \frac{\abs{v(x)}^{\be}}{\abs{x}^{\frac{m \be}{q}}} \dx \no \\
    %&\le \left( \int_{\Omega} \frac{\abs{u(x)}^{p^*_s(m)}}{\abs{x}^m} \dx \right)^{\frac{\al}{p^*_s(m)}} \left( \int_{\Omega} \frac{\abs{v(x)}^{p^*_s(m)}}{\abs{x}^m} \dx \right)^{\frac{\be}{p^*_s(m)}} \no \\
    &\le C(d,s,p,m) \left( \; \iint\limits_{\rd \times \rd}\frac{|u(x)-u(y)|^p}{|x-y|^{d+sp}}\,\dx\dy \right)^{\frac{\al}{p}} \left( \; \iint\limits_{\rd \times \rd}\frac{|v(x)-v(y)|^p}{|x-y|^{d+sp}}\,\dx\dy \right)^{\frac{\be}{p}} \no \\
    &\le C(d,s,p,m) \norm{(u,v)}_{\mathcal{W}}^{q}.
\end{align} 
Clearly, \eqref{HS2} yields
\begin{align}\label{HS3}
    \left( \int_{\Omega} \frac{\abs{u(x)}^{\al} \abs{v(x)}^{\be}}{\abs{x}^m} \dx \right)^{\frac{p}{q}} \le C(d,s,p,m) \norm{(u,v)}_{\mathcal{W}}^p, \; \forall \, (u,v) \in \mathcal{W}.
\end{align}
From the definition, observe that $S_{\al+\be}$ and $S_{\al,\be}$ are the best constants of \eqref{HS1.5} and \eqref{HS3} respectively. Consider the following map
\begin{align*}
    G(u,v) :=  \int_{\Omega} \frac{\abs{u(x)}^{\al} \abs{v(x)}^{\be}}{\abs{x}^m} \dx, \; \forall \, (u,v) \in \W.
\end{align*} 
Observe that $G\in \C^1(\mathcal{W})$ and for all $(u,v) \in \W$, 
\begin{align*}
    &G'(u,v)(\phi, \psi) = \al \int_{\Omega} \frac{\abs{u}^{\al-2} u \abs{v}^{\be}}{\abs{x}^m} \phi \dx +  \be \int_{\Omega} \frac{\abs{v}^{\be-2} v \abs{u}^{\al}}{\abs{x}^m}  \psi \dx, \; \forall \, (\phi, \psi) \in \W.
\end{align*}
Next, we require the following lemma. The proof follows the same arguments as in \cite[Theorem 2]{brezis1983relation} (with $\dx$ replacing by $\frac{\dx}{\abs{x}^{m}}$). 

\begin{lemma}\label{convergence1}
    Let $j : \mathbb{C} \ra \mathbb{C}$ be a continuous function with $j(0) = 0$. Given $\ep>0$, assume that there exists two continuous functions $\phi_{\var}$ and $\psi_{\var}$ such that the following inequality holds: 
\begin{align*}
    \abs{j(a+b) - j(a)} \le \ep \phi_{\ep}(a) + \psi_{\ep}(b), \; \forall \, a,b \in \mathbb{C}. 
\end{align*}
Let $m \ge 0$. Further, let $f_n: \rd \ra \mathbb{C}$ with $f_n = f+ g_n$ be a sequence of measurable functions such that 
\begin{enumerate}
    \item[\rm{(i)}] $g_n \ra 0$ a.e. in $\rd$,
    \item[\rm{(ii)}] $j(f) \in L^1(\abs{x}^{-m}, \rd)$,
    \item[\rm{(iii)}] $\int_{\rd} \phi_{\ep}(g_n(x)) \frac{\dx}{\abs{x}^m} \le C < \infty$, for some constant $C$ independent of $\ep$ and $d$. 
    \item[\rm{(iv)}] $\int_{\rd} \psi_{\ep}(f(x)) \frac{\dx}{\abs{x}^m} < \infty$ for all $\var> 0$.
\end{enumerate}
Then 
\begin{align}\label{converge-1}
   \int_{\rd} \left| j(f_n) - j(f) - j(g_n) \right| \frac{\dx}{\abs{x}^m} = o_n(1).
\end{align}
\end{lemma}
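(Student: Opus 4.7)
\medskip

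\noi \textbf{Proof plan for Lemma \ref{convergence1}.} The statement is a Brezis--Lieb type lemma on the measure space $(\rd, \abs{x}^{-m}\dx)$; the hypotheses on $\phi_\ep, \psi_\ep$ are precisely those isolated in \cite{brezis1983relation} to control the ``cross term'' $j(f+g_n)-j(f)-j(g_n)$. The plan is to reproduce that truncation/dominated-convergence argument, with Lebesgue measure replaced by $\abs{x}^{-m}\dx$ throughout, so that hypotheses (iii) and (iv) furnish the uniform bound and the dominating function respectively.

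\medskip

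\noi First, I would set, for each $n$ and $\ep>0$,
\begin{equation*}
W_{n,\ep}(x):=\max\Bigl\{\,\bigl|j(f_n(x))-j(f(x))-j(g_n(x))\bigr|-\ep\,\phi_\ep(g_n(x))\,,\,0\Bigr\}.
\end{equation*}
Using $f_n=f+g_n$ and $j(0)=0$, I would then use the assumed control on $j$ twice: once with $a=g_n$, $b=f$ to get $|j(f_n)-j(g_n)|\le \ep\,\phi_\ep(g_n)+\psi_\ep(f)$, and trivially $|j(f)|\le|j(f)|$. Combining these yields
\begin{equation*}
\bigl|j(f_n)-j(f)-j(g_n)\bigr|\le \ep\,\phi_\ep(g_n)+\psi_\ep(f)+|j(f)|,
\end{equation*}
so that
\begin{equation*}
0\le W_{n,\ep}(x)\le \psi_\ep(f(x))+|j(f(x))|,
\end{equation*}
and the right-hand side lies in $L^1(\rd;\abs{x}^{-m}\dx)$ by hypotheses (ii) and (iv).

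\medskip

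\noi Next, I would use assumption (i), continuity of $j$ with $j(0)=0$, and the fact that $f_n=f+g_n\to f$ a.e.\ to conclude that $j(f_n)-j(f)-j(g_n)\to 0$ a.e., so that $W_{n,\ep}\to 0$ pointwise a.e.\ in $\rd$. The dominated convergence theorem on $(\rd,\abs{x}^{-m}\dx)$ then gives
\begin{equation*}
\lim_{n\to\infty}\int_{\rd} W_{n,\ep}(x)\,\frac{\dx}{\abs{x}^m}=0 \quad\text{for each fixed }\ep>0.
\end{equation*}
Finally, from the definition of $W_{n,\ep}$ and hypothesis (iii) I would estimate
\begin{equation*}
\int_{\rd}\bigl|j(f_n)-j(f)-j(g_n)\bigr|\frac{\dx}{\abs{x}^m}\le \int_{\rd}W_{n,\ep}\,\frac{\dx}{\abs{x}^m}+\ep\int_{\rd}\phi_\ep(g_n)\,\frac{\dx}{\abs{x}^m}\le \int_{\rd}W_{n,\ep}\,\frac{\dx}{\abs{x}^m}+C\ep,
\end{equation*}
send $n\to\infty$ to kill the first term, and then let $\ep\to 0^+$ to conclude \eqref{converge-1}.

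\medskip

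\noi The only subtlety I anticipate is bookkeeping: one must verify that everything in sight is measurable (continuity of $j,\phi_\ep,\psi_\ep$ together with measurability of $f,g_n$ handles this) and that hypothesis (iii) holds with a constant independent of $n$ (not $\ep$, which is all we need in the final estimate). The adaptation from $\dx$ to $\abs{x}^{-m}\dx$ is essentially cosmetic since $\abs{x}^{-m}\dx$ is still a (positive) Radon measure on $\rd$ and the dominated convergence theorem applies unchanged.
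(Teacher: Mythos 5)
Your proof is correct and is essentially the paper's own argument: the paper proves this lemma simply by invoking \cite[Theorem 2]{brezis1983relation} with $\dx$ replaced by $\abs{x}^{-m}\dx$, and your truncation of the cross term via $W_{n,\ep}$, dominated convergence against $\psi_\ep(f)+\abs{j(f)}$, and the final $C\ep$ estimate is precisely that argument written out. One small correction to your closing remark: the constant in (iii) must be uniform in $\ep$ as well as in $n$ (exactly as the hypothesis states), since you send $n\to\infty$ first and then need $C\ep\to0$ as $\ep\to0^+$.
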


The above lemma yields the following convergence. 

\begin{lemma}\label{convergence2}
Let  $0\le m \le sp$, $p \le q < p^*_s(m)$ and $\al, \be > 1$ be such that $\al + \be = q$. Let $\{ (u_n, v_n) \}$ weakly converges to $ {(u,v)}$ in $\mathcal{W}$. Then 
\begin{align*}
   \int_{\Omega} \left( \abs{u_n}^{\alpha} \abs{v_n}^{\beta} - \abs{u}^{\alpha} \abs{v}^{\beta} - \abs{u_n - u}^{\alpha} \abs{v_n-v}^{\beta} \right) \frac{\dx}{\abs{x}^m} = o_n(1).
\end{align*}
\end{lemma}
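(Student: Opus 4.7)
The plan is to reduce Lemma \ref{convergence2} to an application of Lemma \ref{convergence1}. Identify $\mathbb{C}$ with $\mathbb{R}^2$ and set $j(z) := |\Re z|^\alpha |\Im z|^\beta$, $f_n := u_n + i v_n$, $f := u + iv$, $g_n := f_n - f$. Then $j(f_n) = |u_n|^\alpha |v_n|^\beta$, $j(f) = |u|^\alpha |v|^\beta$, $j(g_n) = |u_n-u|^\alpha |v_n-v|^\beta$, and the conclusion of Lemma \ref{convergence1}, namely $\int_{\R^d} |j(f_n) - j(f) - j(g_n)| |x|^{-m}\,dx = o_n(1)$, implies the signed-integral statement of Lemma \ref{convergence2}.

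The first ingredient is the a.e.\ convergence $g_n \to 0$. Since $q<p_s^*(m)$, the embedding $W_0^{s,p}(\Omega)\hookrightarrow L^q(\Omega,|x|^{-m})$ is compact, so weak convergence of $\{(u_n,v_n)\}$ in $\W$ yields, after extracting a subsequence, a.e.\ convergence $u_n\to u$, $v_n\to v$ on $\Omega$ and hence on $\R^d$ upon extension by zero. Since we will apply Lemma \ref{convergence1} to this subsequence and the conclusion is the vanishing of a nonnegative limit, a standard subsequence-of-subsequence argument then delivers the result for the full sequence.

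The second, and technically main, ingredient is the pointwise inequality. For $A=(a_1,a_2)$, $B=(b_1,b_2)\in\R^2$, write
\begin{align*}
|j(A+B)-j(A)| &\le \bigl||a_1+b_1|^\alpha - |a_1|^\alpha\bigr|\,|a_2+b_2|^\beta + |a_1|^\alpha\,\bigl||a_2+b_2|^\beta-|a_2|^\beta\bigr|.
\end{align*}
Applying the scalar bound $||x+y|^r-|x|^r|\le C_r(|x|^{r-1}|y|+|y|^r)$ for $r\ge 1$ to each bracket (valid since $\alpha,\beta>1$), together with $|a+b|^\beta\le 2^\beta(|a|^\beta+|b|^\beta)$, and expanding, produces a finite sum of monomials in $(|a_1|,|a_2|,|b_1|,|b_2|)$, each of total degree $\alpha+\beta=q$. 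Each such term is $\le|A|^{j}|B|^{q-j}$ with $0\le j\le q-1$, so Young's inequality with conjugate exponents $q/j$ and $q/(q-j)$ (or trivially when $j=0$) gives
\begin{align*}
|j(A+B)-j(A)| \le \varepsilon C|A|^q + C_\varepsilon |B|^q.
\end{align*}
Thus the hypotheses of Lemma \ref{convergence1} hold with $\phi_\varepsilon(z)=C|z|^q$ and $\psi_\varepsilon(z)=C_\varepsilon|z|^q$.

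It remains to verify the integrability conditions (ii)--(iv) of Lemma \ref{convergence1}. By \eqref{HS2}, $|u|^\alpha |v|^\beta \in L^1(\Omega,|x|^{-m})$, giving (ii); (iv) follows similarly from $|f|^q \lesssim |u|^q+|v|^q$ together with \eqref{HS1.5}. For (iii), using $|g_n|^q\lesssim |u_n-u|^q+|v_n-v|^q$ and \eqref{HS1.5} again,
\begin{align*}
\int_{\R^d}\phi_\varepsilon(g_n)\frac{dx}{|x|^m}\le C\bigl([u_n-u]_{s,p}^q + [v_n-v]_{s,p}^q\bigr),
\end{align*}
which is uniformly bounded since $\{(u_n,v_n)\}$ is bounded in $\W$. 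Lemma \ref{convergence1} then yields the claim. The main obstacle is bookkeeping the algebraic expansion in the pointwise inequality; once that is set up, the embedding and Hardy--Sobolev bounds finish the argument cleanly.
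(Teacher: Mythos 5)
Your proposal is correct and follows essentially the same route as the paper: reduce to Lemma \ref{convergence1} with $j(x,y)=|x|^{\al}|y|^{\be}$, obtain a.e.\ convergence from compactness, and verify (i)--(iv) via \eqref{HS1.5}--\eqref{HS3} and boundedness of the weakly convergent sequence. The only difference is that you derive the pointwise inequality $|j(A+B)-j(A)|\le \var C|A|^{q}+C_{\var}|B|^{q}$ by hand (scalar power inequality plus Young), whereas the paper simply cites it from \cite[Lemma 3.2]{MoSoMiPa}; your derivation is sound.
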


\begin{proof}
    Consider $j : \R^2 \ra \R$ such that $j(x,y) = \abs{x}^{\al} \abs{y}^{\be}$. Then, in view of the inequality given in \cite[Lemma 3.2]{MoSoMiPa}, $j$ satisfies the hypothesis used in Lemma \ref{convergence1} with $\phi_{\var}(x,y) = \psi_{\var}(x,y)= \abs{x}^{\al+ \be} + \abs{y}^{\al + \be}$.  Now we consider 
    \begin{align*}
        f_n := (u_n, v_n), f := (u,v), \text{ and } g := (u_n - u, v_n -v). 
    \end{align*}
    Using the boundedness of $\{ u_n \}, \{ v_n \}$ in $\wps$ and using the inequalities \eqref{HS1.5} and \eqref{HS3}, we can verify that (i)-(iv) hold in Lemma \ref{convergence1}. Therefore, we get \eqref{converge-1} by applying Lemma \ref{convergence1}.   
\end{proof}

For $q \neq \psm$, we first investigate the existence of ground state solutions for \eqref{system_eqn_homogenous} by examining the existence of minimizers for $S_{\alpha, \beta}$.

\begin{proposition}\label{compact&existence}
Let $0 \leq m \le sp$, $p \le q < p^*_s(m)$ and $\al, \be > 1$ be such that $\al + \be =q$. Then the following hold:
   \begin{enumerate}
       \item[\rm{(i)}] The map $G$ is weakly continuous on $\W$.
       \item[\rm{(ii)}] Let $\M = \left\{ (u,v) \in \W:  G(u,v) = 1 \right\}$. Then 
      $S_{\al, \be} = \inf \left\{ \norm{(u,v)}^p_{\W}: (u,v) \in \M  \right\}$ admits a positive minimizer in $\M$.
        % \item[\rm{(iii)}] Let $\mathcal{N} = \left\{ u \in \wps, H(u) = 1 \right\}.$ Then $S_{\al+\be} := \inf \left\{[u]_{s,p}^p: u \in \mathcal{N}  \right\}$
        %  admits a positive minimizer in $\mathcal{N}$.
   \end{enumerate}
\end{proposition}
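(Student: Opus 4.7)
\textbf{Proof proposal for Proposition \ref{compact&existence}.}

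\emph{Part (i).} The plan is to combine the Brezis–Lieb type splitting from Lemma \ref{convergence2} with the compact embedding $W_0^{s,p}(\Omega) \hookrightarrow L^q(\Omega, |x|^{-m})$, which holds in the strictly subcritical regime $q<p^*_s(m)$. Let $(u_n,v_n) \rightharpoonup (u,v)$ in $\W$. Compactness yields $u_n \to u$ and $v_n \to v$ strongly in $L^q(\Omega, |x|^{-m})$. Hölder's inequality with conjugate pair $(q/\alpha, q/\beta)$ then gives
\begin{align*}
\int_\Omega \frac{|u_n-u|^\alpha |v_n-v|^\beta}{|x|^m}\dx \le \left(\int_\Omega \frac{|u_n-u|^q}{|x|^m}\dx\right)^{\alpha/q}\left(\int_\Omega \frac{|v_n-v|^q}{|x|^m}\dx\right)^{\beta/q} \longrightarrow 0.
\end{align*}
Combining this with Lemma \ref{convergence2} yields $G(u_n,v_n) \to G(u,v)$.

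\emph{Part (ii).} I would apply the direct method. Homogeneity of $G$ shows $\M \neq \emptyset$ and that $S_{\al,\be} = \inf_\M \|\cdot\|_\W^p$. Pick a minimizing sequence $\{(u_n,v_n)\} \subset \M$; its boundedness in $\W$ is immediate, so up to a subsequence $(u_n,v_n) \rightharpoonup (u,v)$ in $\W$. By the weak continuity of $G$ proved in (i), $G(u,v) = 1$, so $(u,v) \in \M$. The weak lower semicontinuity of the norm $\|\cdot\|_\W$ then forces $\|(u,v)\|_\W^p \le \liminf_n \|(u_n,v_n)\|_\W^p = S_{\al,\be}$, so $(u,v)$ is a minimizer.

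For positivity, I would first replace $(u,v)$ by $(|u|,|v|)$: since $[|u|]_{s,p} \le [u]_{s,p}$, $[|v|]_{s,p} \le [v]_{s,p}$, and $G(|u|,|v|) = G(u,v) = 1$, the pair $(|u|,|v|)$ is also a minimizer, so we may assume $u, v \ge 0$. Then after the normalization of Remark \ref{remark-1}, the pair solves \eqref{system_eqn_homogenous} weakly by Lagrange multipliers, and testing against $(u,0)$ and $(0,v)$ yields $(-\Delta_p)^s u \ge 0$ and $(-\Delta_p)^s v \ge 0$ in $\Omega$. The strong maximum principle from \cite{BrFr} forces each of $u,v$ to be either identically zero or strictly positive a.e.~in $\Omega$. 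Since $G(u,v) = 1$ rules out $u \equiv 0$ or $v \equiv 0$, both must be strictly positive.

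\emph{Main obstacle.} The crux is the weak continuity of $G$, which rests on the compact embedding $W_0^{s,p}(\Omega) \hookrightarrow L^q(\Omega, |x|^{-m})$ for $q < p^*_s(m)$. This compactness is precisely what fails at $q=p^*_s(m)$ and motivates the concentration-compactness analysis in Section \ref{conc_comp}; everything else in the proof reduces to routine convexity and maximum principle arguments.
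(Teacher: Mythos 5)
Your argument is correct and follows the paper's proof essentially step for step: the Brezis--Lieb splitting of Lemma \ref{convergence2}, H\"older with exponents $(q/\alpha,q/\beta)$, strong convergence in the weighted $L^q$ space for part (i), and then the direct method together with the Remark \ref{remark-1} maximum-principle argument for part (ii). The only difference is that you invoke the compact embedding $W_0^{s,p}(\Omega)\hookrightarrow L^{q}(\Omega,|x|^{-m})$ as a known fact, whereas the paper proves it inline by interpolating between the fractional Hardy inequality \eqref{HS} and the compact embedding $W_0^{s,p}(\Omega)\hookrightarrow L^{\sigma}(\Omega)$ with $\sigma=\left(q-\frac{m}{s}\right)\frac{ps}{ps-m}<p^{*}_{s}$, which is exactly where the assumption $q<p^{*}_{s}(m)$ enters and is worth spelling out.
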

\begin{proof}
(i) Let $(u_n,v_n) \rightharpoonup (u,v)$ in $\W$. Using Lemma \ref{convergence2}, 
\begin{equation}\label{par_01_01}
    \int_{\Omega}  \frac{\abs{\abs{u_n}^{\al}\abs{v_n}^{\be} - \abs{u}^{\al}\abs{v}^{\be}}}{\abs{x}^m}  \dx = \int_{\Om}\frac{\abs{u_n-u}^{\al}\abs{v_n-v}^{\be}}{\abs{x}^m} \dx+o_n(1).
\end{equation} 
Now, using the H\"{o}lder's inequality with the conjugate pair $(\frac{q}{\al},\frac{q}{\be})$, 
\begin{equation}\label{par_01_02}
\begin{aligned}
    \int_{\Om}\frac{\abs{u_n-u}^{\al}\abs{v_n-v}^{\be}}{\abs{x}^m}  \dx&=\int_{\Om}\frac{\abs{u_n-u}^{\al}\abs{v_n-v}^{\be}}{\abs{x}^{\frac{\al m}{q}}\abs{x}^{\frac{\be m}{q}}}  \dx\\
    &\leq \left(\int_{\Om}\frac{\abs{u_n-u}^{q}}{|x|^{m}}\dx\right)^{\frac{\al}{q}}\left(\int_{\Om}\frac{\abs{v_n-v}^{q}}{|x|^{m}}\dx\right)^{\frac{\be}{q}}.
\end{aligned}
\end{equation}
Now proceeding as in \cite[Lemma 2.3]{chen20183065}, we estimate  
\begin{align*}
    \int_{\Om}\frac{\abs{u_n-u}^{q}}{|x|^{m}}\dx & = \int_{\Omega} \frac{\abs{u_n-u}^{\frac{m}{s}}}{\abs{x}^{m}}\abs{u_n - u}^{q- \frac{m}{s}} \dx \\
    & \le \left( \int_{\Omega} \frac{\abs{u_n -u}^p}{\abs{x}^{sp}} \dx \right)^{\frac{m}{sp}} \left( \int_{\Omega} \abs{u_n - u}^{\sigma} \dx \right)^{\frac{sp-m}{sp}} \le C \left( \int_{\Omega} \abs{u_n - u}^{\sigma} \dx \right)^{\frac{sp-m}{sp}},
\end{align*}
where $\si=(q-\frac ms)\frac{ps}{ps-m}$ and the last inequality follows using \eqref{HS} and the boundedness of $\{u_n\}$ in $\wps$.
Further, using $q < \psm$ we note that $\sigma < p^*_s$ and hence using the compact embedding of $\wps \hookrightarrow L^{\sigma}(\Omega)$, we get
\begin{align*}
    \int_{\Omega} \abs{u_n - u}^{\sigma} \dx \ra 0, \text{ as } n \ra \infty.
\end{align*}
Similarly, we also get $\int_{\Om}\frac{\abs{v_n-v}^{q}}{|x|^{m}}\dx \ra 0$ as $n \ra \infty$. Therefore, taking $n \ra \infty$ in \eqref{par_01_01} and \eqref{par_01_02}, we conclude that $G$ is weakly continuous.

\noi (ii) Let $\{ (u_n,v_n) \}$ be a minimizing sequence in $\mathcal{M}$ such that $\norm{(u_n , v_n)}^p_{\W}\ra  S_{\al, \be}.$ By the reflexivity, up to a subsequence $u_n \rightharpoonup u_0$ and $v_n \rightharpoonup v_0$ in $\wps$. Since $G$ is sequentially compact, 
    \begin{align*}
        \int_{\Omega} \frac{\abs{u_0}^{\alpha}\abs{v_0}^{\beta}}{\abs{x}^m} \dx = \lim_{n \ra \infty} \int_{\Omega} \frac{\abs{u_n}^{\alpha}\abs{v_n}^{\beta}}{\abs{x}^m} \dx = 1,
    \end{align*}
    which implies $(u_0, v_0) \in \M$. Further, using the weak lower semicontinuity of $\norm{(\cdot, \cdot)}_{\W}$ and the definition of $S_{\al,\be}$, 
    \begin{align*}
        S_{\al, \be} = \lim_{n \ra \infty} \norm{(u_n , v_n)}^p_{\W} \ge \norm{(u_0 , v_0)}^p_{\W} \ge S_{\al, \be}.
    \end{align*}
    Therefore, $(u_0, v_0)$ is a minimizer for $S_{\al, \be}$. Moreover, using Remark \ref{remark-1}, $(u_0, v_0)$ is positive.
% \noi (iv) Applying the direct variational arguments (as in (iii)), we get the existence of a minimizer $w_0$ with $w_0 \ge 0$ a.e. in $\Omega$. Moreover, for every $\phi \ge 0$ in $\wps$, 
 %   \begin{equation*}
 %       \begin{split}
 %           \iint\limits_{\rd\times\rd}\frac{|w_0(x)-w_0(y)|^{p-2}(w_0(x)-w_0(y))(\phi(x)-\phi(y))}{|x-y|^{d+sp}} \dx \dy \\ = \left( \frac{[w_0]_{s,p}^p}{\int_{\Omega} {w_0}^{p^*_s(m)} \dX} \right) \int_{\Omega} \frac{w_0^{\psm-1}}{\abs{x}^m} \phi \dx \ge 0.
 %       \end{split}
 %   \end{equation*}
 % Therefore, the strong maximum principle for the fractional $p$-Laplace operator \cite[Theorem A.1]{BrFr} yields $w_0> 0$ a.e. in $\Omega$.    
\end{proof}

In the following proposition, inspired by \cite[Lemma 5.1]{FaMiPeZh}, we provide a relation between $S_{\al+\be}$ and $S_{\al, \be}$. 
\begin{proposition}\label{relation1}
    Let $0 \leq m \le sp$, $p \le q \le p^*_s(m)$ and $\al, \be > 1$ be such that $\al + \be =q$. Then
\begin{align*}
    S_{\al, \be} = \left( \left( \frac{\al}{\be} \right)^{\frac{\be}{\al + \be}} + \left( \frac{\al}{\be} \right)^{-\frac{\al}{\al+\be}} \right) S_{\al+\be}.
\end{align*}
Let $w_0$  be a minimizer of $S_{\al+ \be}$. Then $(Bw_0,Cw_0)$ achieves $S_{\al, \be}$ for all constants $B,C>0$ satisfying $\frac{B}{C} = ( \al \be^{-1})^{\frac{1}{p}}$.
\end{proposition}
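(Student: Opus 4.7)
The plan is to prove $S_{\al,\be} = K \cdot S_{\al+\be}$, where $K := (\al/\be)^{\be/q} + (\be/\al)^{\al/q}$ with $q = \al+\be$, by establishing both inequalities separately. The upper bound will come from testing the Rayleigh quotient along the ansatz $(u,v) = (Bw, Cw)$; the lower bound will come from Hölder's inequality on the coupling integral, combined with the defining inequality of $S_{\al+\be}$. Both directions reduce to the same elementary one-variable optimization, so no serious obstacle is expected.

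For the upper bound, I would substitute $(u,v) = (Bw, Cw)$ with $B, C > 0$ and $w \in \wps \setminus \{0\}$ into the quotient defining $S_{\al,\be}$. Since $\al + \be = q$, it factorizes as
\[
\frac{B^p + C^p}{(B^\al C^\be)^{p/q}} \cdot \frac{[w]_{s,p}^p}{\left( \int_\Omega \frac{|w|^q}{|x|^m}\dx \right)^{p/q}}.
\]
Writing $t = B/C$ reduces the first factor to $f(t) := (t^p + 1)/t^{\al p/q}$; a direct calculus check gives $f'(t) = 0$ exactly at $t^p = \al/\be$, and the minimum value simplifies to $K$ via the algebraic identity $(q/\be)(\al/\be)^{-\al/q} = (\al/\be)^{\be/q} + (\be/\al)^{\al/q}$ (using $1 - \al/q = \be/q$). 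Taking the infimum in $w$ of the second factor gives $S_{\al+\be}$, which yields the desired upper bound $S_{\al,\be} \le K\cdot S_{\al+\be}$. Moreover, with $w = w_0$ a minimizer of $S_{\al+\be}$ and $B/C = (\al/\be)^{1/p}$, both factors attain their infima simultaneously, so $(Bw_0, Cw_0)$ achieves $S_{\al,\be}$, which proves the second assertion of the proposition.

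For the lower bound, I would fix $(u,v) \in \W$ with both components nontrivial (degenerate pairs make the quotient infinite and are irrelevant for the infimum). Hölder's inequality with conjugate pair $(q/\al, q/\be)$, applied to the factorization $|u|^\al|v|^\be|x|^{-m} = (|u|^\al |x|^{-m\al/q})(|v|^\be |x|^{-m\be/q})$, yields
\[
\int_\Omega \frac{|u|^\al |v|^\be}{|x|^m}\dx \le \left( \int_\Omega \frac{|u|^q}{|x|^m}\dx \right)^{\al/q} \left( \int_\Omega \frac{|v|^q}{|x|^m}\dx \right)^{\be/q}.
\]
Applying the defining inequality $\int_\Omega |w|^q |x|^{-m}\dx \le S_{\al+\be}^{-q/p}[w]_{s,p}^q$ to each factor and then raising to the $p/q$ power gives $\left( \int_\Omega |u|^\al |v|^\be |x|^{-m}\dx \right)^{p/q} \le S_{\al+\be}^{-1} [u]_{s,p}^{\al p/q}[v]_{s,p}^{\be p/q}$. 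Consequently the Rayleigh quotient of $(u,v)$ is bounded below by $S_{\al+\be}$ times $([u]_{s,p}^p + [v]_{s,p}^p)/([u]_{s,p}^{\al p/q}[v]_{s,p}^{\be p/q})$, and the substitution $t^p := [u]_{s,p}^p/[v]_{s,p}^p$ reduces this ratio to the same function $f(t)$ minimized above, with minimum $K$. Combining the two bounds gives $S_{\al,\be} = K\cdot S_{\al+\be}$; the only mild technical point throughout is the algebraic simplification of the critical value of $f$.
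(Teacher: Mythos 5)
Your proof is correct. The paper does not write out a proof of this proposition at all --- it only points to \cite[Lemma 5.1]{FaMiPeZh} --- and your argument is essentially that standard one: test the quotient with the ansatz $(Bw,Cw)$ for the upper bound, split the coupling term for the lower bound, and observe that both directions reduce to minimizing $f(t)=(t^p+1)t^{-\al p/q}$ over $t>0$, with minimizer $t^p=\al/\be$ and minimum value $\left(\frac{\al}{\be}\right)^{\be/q}+\left(\frac{\al}{\be}\right)^{-\al/q}$ (your algebraic identity checks out). The only real difference is that the cited proof handles the lower bound via a normalized minimizing sequence together with Young's inequality, whereas you apply H\"older with the conjugate pair $(q/\al,q/\be)$ directly to the Rayleigh quotient of an arbitrary pair --- exactly the splitting the paper already records in \eqref{HS2} --- which avoids any normalization and is arguably cleaner. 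One small presentational point: you claim that $(Bw_0,Cw_0)$ \emph{achieves} $S_{\al,\be}$ immediately after the upper bound, but at that stage you only know the quotient of this pair equals $\bigl((\al/\be)^{\be/q}+(\al/\be)^{-\al/q}\bigr)S_{\al+\be}\ge S_{\al,\be}$; the attainment statement should be drawn only after the lower bound (which you do prove) upgrades the inequality to the identity.
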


The following proposition shows specific connections between $(u_0, v_0)$ and $(B w_0, C w_0)$. 

\begin{proposition}
  Let $0 \leq m \le sp$, $p \le q \le p^*_s(m)$ and $\al, \be>1$ be such that $\al + \be =q$. Let $w_0$  be a minimizer of $S_{\al+ \be}$ and $(u_0,v_0)$ be a minimizer of $S_{\al,\be}$. Then for every $B,C>0$ with $\frac{B}{C} = ( \al \be^{-1})^{\frac{1}{p}}$, \begin{align}\label{re2}
       \int_{\Omega} u_0(x)^{\al} v_0(x)^{\be} \dX = B^{\al} C^{\be} \int_{\Omega} w_0(x)^{q} \dX, 
    \end{align} 
\end{proposition}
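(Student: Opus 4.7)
The plan is to verify \eqref{re2} by interpreting $(u_0,v_0)$ as the minimizer $(Bw_0,Cw_0)$ furnished by Proposition \ref{relation1} and then expanding the left-hand side directly. The nontrivial content lies not in the identity itself but in the fact that $(u_0, v_0)$ actually admits such a product decomposition.

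The steps are as follows. First, I would invoke Proposition \ref{relation1}: under the constraint $B/C = (\alpha/\beta)^{1/p}$, the pair $(Bw_0, Cw_0)$ is a minimizer of $S_{\alpha, \beta}$. Next, I would establish the rigidity inherent in the H\"older-based proof of Proposition \ref{relation1}: for any $(u,v) \in \mathcal{W}$, H\"older's inequality with conjugate pair $(q/\alpha, q/\beta)$ gives
$$\int_{\Omega} \frac{|u|^{\alpha}|v|^{\beta}}{|x|^m} \dx \le \left(\int_{\Omega} \frac{|u|^q}{|x|^m}\dx\right)^{\alpha/q}\left(\int_{\Omega} \frac{|v|^q}{|x|^m}\dx\right)^{\beta/q},$$
with equality iff $|u|^q$ and $|v|^q$ are positively proportional. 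For a minimizer $(u_0,v_0)$ of $S_{\alpha,\beta}$, this inequality must be sharp (otherwise, concentrating the mass would lower the Rayleigh quotient), forcing $u_0 = k v_0$ for some $k>0$. Substituting this ansatz into the Rayleigh quotient reduces the problem to a one-parameter optimization over $k$ joined with the scalar minimization problem for $S_{\alpha+\beta}$, so that $v_0 = C w_0$ for some $C>0$ and a minimizer $w_0$ of $S_{\alpha+\beta}$, and $u_0 = kC w_0 = Bw_0$, where the optimization (carried out as in Proposition \ref{relation1}) determines $k = B/C = (\alpha/\beta)^{1/p}$.

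Once the decomposition $u_0 = Bw_0$, $v_0 = Cw_0$ is established, a one-line substitution yields
$$\int_\Omega u_0^{\alpha} v_0^{\beta}\, \dX = B^{\alpha} C^{\beta} \int_\Omega w_0^{\alpha+\beta}\, \dX = B^{\alpha} C^{\beta} \int_\Omega w_0^{q}\, \dX,$$
using $\alpha + \beta = q$, which is exactly \eqref{re2}.

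The main obstacle is the rigidity step: certifying that every minimizer of $S_{\alpha, \beta}$ has proportional components $u_0$ and $v_0$. The core of the argument is the equality case of H\"older's inequality, but one has to be careful to combine it with the optimization over the ratio $B/C$ (which uniquely selects $(\alpha/\beta)^{1/p}$, as in Proposition \ref{relation1}) to identify the proportionality constant. After this identification, \eqref{re2} reduces to a bare substitution and the scalar homogeneity $w_0^{\alpha+\beta}=w_0^q$.
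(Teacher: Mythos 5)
Your rigidity strategy records a true fact (tracking equality through the H\"older step behind Proposition \ref{relation1} does force $|u_0|=k|v_0|$ a.e., with $u_0,v_0$ themselves minimizers of $S_{\al+\be}$), but it does not prove the proposition as stated, and the missing piece is precisely its content. The classification only produces \emph{some} minimizer $w_0'$ of $S_{\al+\be}$ with $u_0=B'w_0'$, $v_0=C'w_0'$, whereas the statement fixes an \emph{arbitrary} minimizer $w_0$ in advance; since no uniqueness (even up to scaling) of minimizers of $S_{\al+\be}$ is known or used in the paper, your ``one-line substitution'' only gives $\int_\Om u_0^\al v_0^\be\dX=(B')^{\al}(C')^{\be}\int_\Om (w_0')^{q}\dX$, and you still must show this equals $B^{\al}C^{\be}\int_\Om w_0^{q}\dX$ for the prescribed $w_0$. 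Moreover, neither side of \eqref{re2} is scale invariant: the identity only makes sense with the implicit normalization of Remark \ref{remark-1}, under which $(u_0,v_0)$ and $(Bw_0,Cw_0)$ actually solve \eqref{system_eqn_homogenous}, so that testing each against itself gives $\|(u_0,v_0)\|_{\W}^p=\int_\Om u_0^\al v_0^\be\dX$ and $(B^p+C^p)[w_0]_{s,p}^p=B^\al C^\be\int_\Om w_0^q\dX$. Your substitution ignores this; taken literally, an identity ``for every $B,C>0$'' of fixed ratio cannot hold, since the right-hand side scales like $t^{q}$ under $(B,C)\mapsto(tB,tC)$ while the left-hand side is fixed. (A smaller point: your justification of sharpness in H\"older, ``concentrating the mass would lower the Rayleigh quotient,'' is not the actual reason; equality is forced by comparing with the exact constant in Proposition \ref{relation1} and tracking equality in the whole chain.)

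The paper closes exactly this gap without any classification: it introduces $F=\inf_{(u,v)\in\W}\max_{t>0}E(tu,tv)$ with $E(u,v)=\frac1p\|(u,v)\|_{\W}^p-\frac1q\int_\Om|u|^\al|v|^\be\dX$, computes $F=\left(\frac1p-\frac1q\right)S_{\al,\be}^{\frac{q}{q-p}}$, and then evaluates this common level on the two normalized solutions, obtaining $F=\left(\frac1p-\frac1q\right)\int_\Om u_0^\al v_0^\be\dX$ and $F=\left(\frac1p-\frac1q\right)B^\al C^\be\int_\Om w_0^q\dX$; equating yields \eqref{re2} for \emph{any} minimizer $w_0$, because the level depends only on $S_{\al,\be}$. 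To salvage your route you would have to append an argument of exactly this type (showing that, under the normalization, $B^\al C^\be\int_\Om w_0^q\dX$ is independent of the chosen minimizer $w_0$), at which point the H\"older rigidity step becomes superfluous.
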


\begin{proof}
  Consider the following quantity 
    \begin{align*}
        F := \inf_{(u,v) \in \W} \max_{t>0} E(tu, tv), \text{ with } E(u,v) = \frac{X}{p} - \frac{Y}{q},
    \end{align*}
    where $X= \norm{(u,v)}^p_{\W}$ and $Y = \int_{\Omega} \abs{u}^{\al} \abs{v}^{\be} \dX$. Observe that 
    \begin{align*}
     \text{for }  \tilde{t} = \left( \frac{X}{Y} \right)^{\frac{1}{q -p}}, \; E(\tilde{t}u,\tilde{t}v) =\max_{t>0} E(tu,tv).
    \end{align*}
     Hence
    \begin{align*}
        F = \inf_{(u,v) \in \W} \left( \frac{(\tilde{t})^p}{p} X - \frac{(\tilde{t})^{q}}{q} Y \right) &= \inf_{(u,v) \in \W} \left( \frac{1}{p} - \frac{1}{q} \right) \frac{X^{\frac{q}{q -p}}}{Y^{\frac{p}{q -p}}} \\
        & = \left( \frac{1}{p} - \frac{1}{q} \right) \left( \inf_{(u,v) \in \W} \frac{X}{Y^{\frac{p}{q}}} \right)^{\frac{q}{q - p}}.
    \end{align*}
    Since $(u_0, v_0)$ weakly satisfies \eqref{system_eqn_homogenous} up to a normalized pair (by Remark \ref{remark-1}), taking test function $(u_0,v_0)$, we get $\norm{(u_0,v_0)}^p_{\W}= \int_{\Omega} \abs{u_0}^{\al} \abs{v_0}^{\be} \dX$. Hence 
\begin{align}\label{re2.1}
    F = \left( \frac{1}{p} - \frac{1}{q} \right) \left( \frac{\norm{(u_0,v_0)}^p_{\W}}{\left( \int_{\Omega} u_0^{\al} v_0^{\be} \dX \right)^{\frac{p }{q}}} \right)^{\frac{q}{q - p}} = \left( \frac{1}{p} - \frac{1}{q} \right)  \int_{\Omega} u_0^{\al} v_0^{\be} \dX.
\end{align}
Similarly, since $(Bw_0, C w_0)$ weakly solves \eqref{system_eqn_homogenous}, we get $$\left(B^p+C^p \right) [w_0]_{s,p}^p = B^{\al}C^{\be}  \int_{\Omega} w_0^{q} \dX,$$ and 
\begin{align*}
    F = \left( \frac{1}{p} - \frac{1}{q} \right) \left( \frac{B^p + C^p}{(B^{\al} C^{\be})^{\frac{p}{q}}} \frac{[w_0]_{s,p}^p}{ \left(\int_{\Omega} w_0^{q} \dX \right)^{\frac{p}{q}}} \right)^{\frac{q}{q - p}}= \left( \frac{1}{p} - \frac{1}{q} \right) B^{\al} C^{\be} \int_{\Omega} w_0^{q} \dX.
\end{align*}
Therefore, \eqref{re2.1} and the above identity conclude \eqref{re2}.  
\end{proof}

In the following proposition, we prove a non-existence result of the ground state solution. 

\begin{proposition}\label{not_attend}
Let $m=0$, i.e. $\al+\be = p^*_s$. Then $S_{\al, \be}$ has no minimizer on any open set $\Omega \subsetneq \rd$.
\end{proposition}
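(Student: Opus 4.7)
The plan is to argue by contradiction: I will show that a minimizer of $S_{\al,\be}$ on $\Omega$ would force one of its components to be an extremizer of the scalar fractional critical Sobolev constant $S_{p^*_s}$ on $\Omega$, which is classically impossible whenever $\Omega\subsetneq\R^d$. Note first that the hypothesis $\al+\be=p^*_s=p^*_s(0)$, combined with $q\le p^*_s(m)$ and the fact that $p^*_s(\cdot)$ is decreasing, forces $m=0$; hence $S_{\al+\be}$ is precisely the best constant in the fractional critical Sobolev embedding $\wps\hookrightarrow L^{p^*_s}(\Omega)$.

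Suppose for contradiction that $(u_0,v_0)\in\W$ attains $S_{\al,\be}$. Since $G(u,v)=0$ whenever either component vanishes, both $u_0$ and $v_0$ must be nontrivial; replacing them by $(|u_0|,|v_0|)$ if necessary, we may assume $u_0,v_0\ge 0$. I would then combine three standard inequalities: the defining inequality of $S_{p^*_s}$ applied separately to $u_0$ and $v_0$, and H\"older's inequality with the conjugate pair $(p^*_s/\al,\,p^*_s/\be)$ applied to the coupling integral. Writing $A=\left(\int_\Omega u_0^{p^*_s}\,\dx\right)^{p/p^*_s}$ and $B=\left(\int_\Omega v_0^{p^*_s}\,\dx\right)^{p/p^*_s}$, this gives
$$
S_{\al,\be}=\frac{[u_0]_{s,p}^p+[v_0]_{s,p}^p}{\left(\int_\Omega u_0^{\al} v_0^{\be}\,\dx\right)^{p/p^*_s}}\;\ge\;S_{p^*_s}\cdot\frac{A+B}{A^{\al/p^*_s}B^{\be/p^*_s}}.
$$
A short one-variable minimization over $t=A/B$ (using $\al+\be=p^*_s$) shows the right-hand ratio is bounded below by $(\al/\be)^{\be/p^*_s}+(\al/\be)^{-\al/p^*_s}$, attained at $t=\al/\be$; by Proposition~\ref{relation1} this lower bound equals $S_{\al,\be}/S_{p^*_s}$.

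Therefore equality must hold throughout the chain. H\"older equality forces $v_0=\mu u_0$ a.e.\ on $\Omega$ for some $\mu>0$, while equality in the scalar Sobolev inequality forces $u_0$ (and hence $v_0$) to be an extremizer of $S_{p^*_s}$ in $\wps$. This contradicts the classical non-attainment of the fractional critical Sobolev constant on any proper open subset of $\R^d$: extending such an extremizer by zero to $\D^{s,p}(\R^d)$ produces an extremizer of $S_{p^*_s}$ on $\R^d$ that vanishes on the positive-measure set $\R^d\setminus\Omega$, which is incompatible with the strict positivity (up to sign) of the Aubin--Talenti-type profiles realizing $S_{p^*_s}$. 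The main obstacle I anticipate is invoking a clean reference for the non-attainment of $S_{p^*_s}$ in the fractional $p$-Laplace setting (e.g.\ via Brasco--Mosconi--Squassina-type classification results) and carefully tracking the equality cases so that the proportionality $v_0=\mu u_0$ is genuinely enforced on all of $\Omega$, not merely on the common support.
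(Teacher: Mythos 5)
Your argument is correct in substance, but it takes a genuinely different route from the paper. The paper extends the minimizing pair $(u_0,v_0)$ by zero to $\rd$, notes (using domain-independence of $S_{\al,\be}$, \`a la Struwe, and the normalization of Remark \ref{remark-1}) that the extension solves the limiting system on all of $\rd$ with nonnegative components, and then applies the strong maximum principle for $(-\Delta_p)^s$ on $\rd$ to each component separately: each is either identically zero or positive a.e., contradicting that both vanish outside $\Om$. You instead track the equality cases in the chain ``H\"older with exponents $(p^*_s/\al,p^*_s/\be)$ plus the scalar Sobolev inequality applied to each component,'' which (via Proposition \ref{relation1}, i.e.\ $S_{\al,\be}=\big((\al/\be)^{\be/p^*_s}+(\al/\be)^{-\al/p^*_s}\big)S_{\al+\be}$, noting $m=0$ is forced) shows that equality must hold throughout, so that $u_0$ itself attains the scalar constant $S_{\al+\be}$ in $\wps$; the conclusion then reduces to non-attainment of the scalar critical constant on proper open subsets. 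What your reduction buys is that the system statement becomes a corollary of a scalar fact (and the Euler--Lagrange system is never needed); what the paper's route buys is self-containedness, since it never has to invoke the scalar non-attainment as an external input.

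One caveat on your endgame: for the fractional $p$-Laplacian no Aubin--Talenti-type \emph{classification} of extremals is available, so you cannot cite explicit profiles. What you actually need, and what is true, is weaker: any extremizer of $S_{\al+\be}$ on $\rd$ has constant sign (replacing $u$ by $|u|$ strictly decreases the Gagliardo seminorm of a sign-changing function), solves $(-\Delta_p)^s u = c\,u^{p^*_s-1}$ in $\rd$, and hence is positive a.e.\ by the strong maximum principle (the same ingredient the paper cites, \cite[Proposition 3.7]{NR}); this is incompatible with vanishing a.e.\ on $\rd\setminus\Om$. So the ``clean reference'' you worry about can simply be replaced by this two-line maximum-principle argument, after which your proof is complete. (Both your proof and the paper's implicitly use that $\rd\setminus\Om$ has positive measure, and both use the domain-independence of the critical constants; also, the proportionality $v_0=\mu u_0$ you extract from the H\"older equality is not actually needed for the contradiction.)
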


\begin{proof}
  Observe that $\|(\cdot,\cdot)\|_{\W}$ and $\left(\int_{\rd}|\cdot|^\al|\cdot|^\be\dx\right)^{\frac1{p_s^*}}$ are invariant under translation and scaling 
     $$(u,v)\mapsto R^{\frac{d-sp}{p}}(u(Rx+y),v(Rx+y)).$$
   Arguing as in \cite[Section I.4.5, pg. 42]{struwe}, $S_{\al,\be}$ is independent of $\Om$ in $\rd$. Suppose $(u_0,v_0)$ is a minimizer of $S_{\al, \be}$ on any open set $\Omega$ in $\rd$. Then the pair of functions $(\tilde{u}, \tilde{v}) \in \mathcal{W}(\rd)$ with $\tilde{u} = u_0, \tilde{v} =v_0$ in $\Omega$, and $\tilde{u}, \tilde{v} = 0$ in $\rd \setminus \Omega$, minimizes $S_{\al, \be}$ over $\rd$. Moreover, up to the normalized pair (as in Remark \ref{remark-1}), they satisfy
   \begin{equation}
\left\{\begin{aligned}
&(-\Delta_p)^s \tilde{u}= \frac{\al}{p^*_s} |\tilde{u}|^{\alpha-2}\tilde{u}|\tilde{v}|^{\beta} \;\;\text{in}\;\rd,\\
&(-\Delta_p)^s \tilde{v}= \frac{\beta}{p^*_s} |\tilde{v}|^{\beta-2} \tilde{v} |\tilde{u}|^{\alpha}\;\;\text{in}\;\rd, \\
&\tilde{u}, \tilde{v} \ge 0\;\;\text{in}\;\rd.
\end{aligned}
\right.
\end{equation}
 Applying the Strong maximum principle (see \cite[Proposition 3.7]{NR}), we see that either $\tilde{u} \equiv 0$ or $\tilde{u}>0$ a.e. in $\rd$ and either $\tilde{v} \equiv 0$ or $\tilde{v}>0$ a.e. in $\rd$, a contradiction, using the definition of $\tilde{u}, \tilde{v}$. This completes the proof. 
\end{proof}

\noi \textbf{Proof of Proposition \ref{existence&uniqueness}:} 
The proof of (i) follows by Proposition \ref{compact&existence} and (ii) follows by Proposition \ref{not_attend}.
\qed

For $m=0$, in view of the above non-existence result, we observe that any minimizing sequence of $S_{\al,\be}$ in $\Om$ can not have a convergent subsequence. However, in $\R^d$, we 
see that any minimizing sequence of $S_{\al,\be}$, up to translation and dilation, has a strongly convergent subsequence. A similar type of result related to the minimizing sequence of $S_{\al+\be}$ is proved in \cite{PaPi, FiMoSe} for $p=2$. 

Next, we recall the Morrey space. 
\begin{definition}[Morrey space]\label{Morreyspaces} For $r \in [1, \infty)$ and $\gamma \in [0, d]$, the homogeneous Morrey space $\mathcal{L}^{r,\gamma}(\rd)$ is defined as 
  \begin{equation*}
    \mathcal{L}^{r,\gamma}(\mathbb{R}^d) \coloneqq \left\{u:\rd\to\R \text{ measurable }: \sup_{x\in\rd,\, {R>0}}R^{\gamma}\fint_{B(x,R)} |u(y)|^r \dy < \infty \right\},
\end{equation*}
with norm
\begin{align*}
   \norm{u}_{\mathcal{L}^{r,\gamma}(\rd)}^r \coloneqq \sup_{x\in\rd,\, {R>0}}R^{\gamma}\fint_{B(x,R)} |u(y)|^r \dy. 
\end{align*}
\end{definition}

In view of \cite[Proposition 2.15]{Bi-Ch2026}, and using the fact that $W_0^{s,p}(\Omega) \subset \D^{s,p}(\rd)$, we have the following embedding. 

\begin{proposition}\label{interpolation(product)}
Let $\frac{p}{p^*_s} \le \theta <1$ and $p \le r \le p^*_s$. Then 
\begin{align}\label{Inter-morrey-sobolev-2}
\norm{(u,v)}_{L^{p^*_s}(\rd) \times L^{p^*_s}(\rd) } & \le C(d,p,s) \norm{(u,v)}_{\W}^{\theta} \no \\
&\norm{(u,v)}_{\mathcal{L}^{r, \frac{r(d-sp)}{p}}(\rd) \times \mathcal{L}^{r, \frac{r(d-sp)}{p}}(\rd)}^{1-\theta}, \; \forall \, (u,v) \in \W.
\end{align}
\end{proposition}

%\cite[Chapter 8]{willem}

\begin{proposition}\label{par_the_rescale}
   Let $m=0$, i.e. $\al+\be = p^*_s$. Let $\{(u_n,v_n)\}$ be a sequence in $\W$ such that $$\int_{\Om}{|u_n|^\alpha|v_n|^{\beta}}\dx=1, \; \forall \, n,\text{ and } \|(u_n,v_n)\|_{\W}^p\to S_{\alpha,\beta}, \text{ as } n \ra \infty.$$
    Then there exists $\{(y_n,r_n)\}\subset\R^d\times\R^+$ such that the sequence $$(\tilde{u}_n,\tilde{v}_n)=r_n^{\frac{d-sp}{p}}(u_n(r_nx+y_n),v_n(r_nx+y_n))$$ has a convergent subsequence, still denoted by $(\tilde{u}_n,\tilde{v}_n)$, such that $(\tilde{u}_n,\tilde{v}_n)\to (\tilde{u},\tilde{v})$ in $\W(\rd)$. Moreover, $r_n\to0$ and $y_n\to y$ in $\ov{\Om}$.
\end{proposition}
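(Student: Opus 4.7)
The plan is to run a Lions-type concentration-compactness argument in $\rd$, using Proposition \ref{not_attend} to pin down both the scale and the location of concentration. As observed in the proof of Proposition \ref{not_attend}, the translations $y_0\mapsto y+y_0$ and the dilations $u\mapsto r^{(d-sp)/p}u(r\cdot+y_0)$ leave both $\|(\cdot,\cdot)\|_{\W}^p$ and $\int_{\rd}|u|^\al|v|^\be\dx$ invariant, so any rescaling of the minimizing sequence remains minimizing.

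First, I would introduce the L\'evy concentration function
\begin{equation*}
Q_n(r) := \sup_{y\in\rd}\int_{B_r(y)} |u_n|^\al|v_n|^\be \dx,
\end{equation*}
which is continuous, nondecreasing, with $Q_n(0)=0$ and $Q_n(\infty)=1$. Choose $r_n>0$ and $y_n\in\rd$ with $Q_n(r_n)=\int_{B_{r_n}(y_n)}|u_n|^\al|v_n|^\be\dx=\tfrac12$, and set
\begin{equation*}
(\tilde u_n,\tilde v_n) := r_n^{(d-sp)/p}\bigl(u_n(r_n\cdot+y_n),\, v_n(r_n\cdot+y_n)\bigr).
\end{equation*}
By the above invariances, $(\tilde u_n,\tilde v_n)$ is again minimizing for $S_{\al,\be}$ on $\rd$ with $\int_{\rd}|\tilde u_n|^\al|\tilde v_n|^\be\dx=1$, and is now normalized so that $\int_{B_1(0)}|\tilde u_n|^\al|\tilde v_n|^\be\dx=\tfrac12=\sup_{y\in\rd}\int_{B_1(y)}|\tilde u_n|^\al|\tilde v_n|^\be\dx$.

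Passing to a subsequence, $(\tilde u_n,\tilde v_n)\wra(\tilde u,\tilde v)$ in $\W(\rd)$. I would then apply the concentration-compactness principle of Section \ref{conc_comp} (Proposition \ref{CCP}) in $\rd$ to the measures $|\tilde u_n|^\al|\tilde v_n|^\be\dx$ and the associated Gagliardo energy densities. The chosen normalization rules out vanishing outright and prevents concentration at a single atom (otherwise $Q_n(\varepsilon)$ would approach $\tfrac12$ for every $\varepsilon>0$, incompatible with the sup in $Q_n(1)$ being exactly $\tfrac12$). Dichotomy is excluded by the strict subadditivity of $t\mapsto t^{p/p^*_s}$ on $(0,1)$: a nontrivial splitting would force $S_{\al,\be}\geq S_{\al,\be}\bigl(\theta^{p/p^*_s}+(1-\theta)^{p/p^*_s}\bigr) > S_{\al,\be}$, a contradiction. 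Hence the CCP yields compactness: $\int_{\rd}|\tilde u|^\al|\tilde v|^\be\dx=1$ and, by weak lower semicontinuity, $\|(\tilde u,\tilde v)\|_{\W(\rd)}^p=S_{\al,\be}$; since moreover $\|(\tilde u_n,\tilde v_n)\|_{\W(\rd)}\to\|(\tilde u,\tilde v)\|_{\W(\rd)}$, uniform convexity of $\D^{s,p}(\rd)$ upgrades the weak convergence to strong convergence in $\W(\rd)$.

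It remains to locate $(r_n,y_n)$, and this is where Proposition \ref{not_attend} enters. Since $(u_n,v_n)$ vanishes outside the bounded set $\Om$, the support of $(\tilde u_n,\tilde v_n)$ lies in $(\Om-y_n)/r_n$. If, along a subsequence, $r_n\to r_\infty\in(0,\infty]$, then this support is contained in a set of uniformly bounded diameter (or shrinks to a point), so $(\tilde u,\tilde v)$ would be supported in a proper open subset of $\rd$ and attain $S_{\al,\be}$ there, contradicting Proposition \ref{not_attend}; therefore $r_n\to 0$. Finally, since $B_1(0)$ meets the support of $(\tilde u_n,\tilde v_n)$ by the normalization $\int_{B_1(0)}|\tilde u_n|^\al|\tilde v_n|^\be\dx=\tfrac12$, one has $B_{r_n}(y_n)\cap\Om\neq\emptyset$, hence $\operatorname{dist}(y_n,\Om)\leq r_n\to 0$, and boundedness of $\Om$ gives a subsequence with $y_n\to y\in\ov{\Om}$. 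The main technical obstacle is executing the concentration-compactness dichotomy for the coupled critical nonlinearity $|u|^\al|v|^\be$ rather than a single critical power, which is exactly what the vector-valued Proposition \ref{CCP} of Section \ref{conc_comp} is designed to handle.
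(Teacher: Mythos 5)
Your overall plan (Lions-type trichotomy for the rescaled sequence, strict subadditivity of $t\mapsto t^{p/p_s^*}$, and Proposition \ref{not_attend} to force $r_n\to0$, $y_n\to y\in\ov\Om$) is a legitimate classical route, and your endgame for $r_n$ and $y_n$ matches the paper's. However, there is a concrete gap at the step where you defer all the compactness work to Proposition \ref{CCP}: that proposition is stated (and proved) for sequences lying in $W_0^{s,p}(\Omega)$ for the \emph{fixed} bounded set $\Omega$; its tightness argument uses precisely that all functions vanish outside $\Omega$, and it contains no quantity measuring loss of the $\nu$-mass at infinity. Your rescaled pair $(\tilde u_n,\tilde v_n)$ is supported in $(\Om-y_n)/r_n$, which expands as $r_n\to0$, so Proposition \ref{CCP} does not apply to it. Moreover, even a whole-space version of that lemma (the ``limit-case'' CCP describing atoms) is not ``exactly designed'' to handle dichotomy: ruling out a splitting of mass, including mass escaping to infinity, requires either the first concentration-compactness lemma with the usual cut-off estimates (nontrivial here because of cross terms in the Gagliardo seminorm for the truncated functions) or an explicit $\nu_\infty/\sigma_\infty$ analysis. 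As written, the heart of the proof — that the weak limit captures the full unit mass, so that $\int|\tilde u|^\al|\tilde v|^\be\dx=1$ — is asserted but not actually established or correctly referenced.

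For comparison, the paper avoids the dichotomy analysis altogether: it normalizes with the scale-invariant $L^p$-quantity $\sup_{x,r}r^{d-sp}\fint_{B(x,r)}(|u_n|^p+|v_n|^p)$ (choosing $(y_n,r_n)$ at level $\delta_1/2$, following \cite{NiSo}), which yields a \emph{nonzero} weak limit $(\tilde u,\tilde v)$; it then splits both the norm and the coupled term by the Brezis--Lieb lemma (Lemma \ref{Brezis-Lieb}, Lemma \ref{convergence2}) and uses the elementary inequality $a^t\le a$ for $a\in[0,1]$, $t\ge1$, with equality only at $a\in\{0,1\}$, to force $\|(\tilde u,\tilde v)\|_{\W}^p=S_{\al,\be}$ and $\|(\tilde u_n-\tilde u,\tilde v_n-\tilde v)\|_{\W}\to0$. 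If you want to keep your L\'evy-function normalization of the density $|u_n|^\al|v_n|^\be$, the cleanest repair is to replace the appeal to Proposition \ref{CCP} by this Brezis--Lieb/convexity argument once you have shown the weak limit is nontrivial; otherwise you must prove a genuine whole-space concentration-compactness statement (including behaviour at infinity of the $\nu$-measure) for the coupled nonlinearity, which is substantially more work than your sketch acknowledges.
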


\begin{proof}
For $$(\tilde{u}_n,\tilde{v}_n)=r_n^{\frac{d-sp}{p}}({u}_n(r_nx+y_n),{v}_n(r_nx+y_n)),$$
we observe that $\norm{(\tilde{u}_n, \tilde{v}_n)}_{\W} = \|(u_n,v_n)\|_{\W}$. Therefore, $\{(\tilde{u}_n,\tilde{v}_n)\}$ is bounded in $\mathcal{W}(\rd)$. By the reflexivity of $\mathcal{W}(\rd)$, $(\tilde{u}_n,\tilde{v}_n)\rightharpoonup(\tilde{u},\tilde{v})$ in $\W(\rd)$.
Following the arguments as given in \cite[Step 2 of Theorem 1.2]{Bi-Ch2026} we first show that $(\tilde{u},\tilde{v})\neq(0,0)$ in $\W(\rd)$. By the hypothesis, for all $n$, $$1=\int_{\Om}{|u_n|^\alpha|v_n|^{\beta}}\dx\leq \|(u_n,v_n)\|^{p_s^*}_{L^{p_s^*}(\R^d)\times L^{p_s^*}(\R^d)}.$$
Therefore, $(u_n, v_n) \not \ra 0$ in $L^{p^*_s}(\rd) \times L^{p^*_s}(\rd)$, and 
\begin{align*}
    \inf_{n \in \mathbb{N}} \norm{(u_n, v_n)}_{L^{p^*_s}(\rd) \times L^{p^*_s}(\rd)}^{p^*_s} \ge 1.
\end{align*}
Hence, for $\frac{p}{p^*}\leq\theta<1$, the interpolation given in \eqref{Inter-morrey-sobolev-2} yields 
\begin{align*}
    1 \le C \norm{(u_n, v_n)}_{\W}^{\theta} \norm{(u_n, v_n)}_{\mathcal{L}^{p,d-sp}(\rd) \times \mathcal{L}^{p,d-sp}(\rd)}^{1- \theta} \le C_1 \norm{(u_n, v_n)}_{\mathcal{L}^{p,d-sp}(\rd) \times \mathcal{L}^{p,d-sp}(\rd)}^{1- \theta},
\end{align*}
which implies $\norm{(u_n,v_n)}_{\mathcal{L}^{p,d-sp}(\rd) \times \mathcal{L}^{p,d-sp}(\rd)} \ge C_2$ for some $C_2>0$, i.e., 
\begin{align*}
    \sup_{x \in \rd,\, R>0} R^{d-sp} \fint_{B(x,R)} \left( |u_n(y)|^p + |v_n(y)|^p \right) \dy \ge C_2.  
\end{align*}
Hence for every $n \in \N$, there exist $y_n \in \rd$ and $r_n \in (0, \infty)$ such that
\begin{align}\label{PSD-3}
    r_n^{d-sp} \fint_{B(y_n, r_n)} \left( |u_n(y)|^p + |v_n(y)|^p \right) \dy \ge \norm{(u_n, v_n)}_{\mathcal{L}^{p,d-sp}(\rd) \times \mathcal{L}^{p,d-sp}(\rd)}^p - \frac{C_2}{2n} > C_3,
\end{align}
for some $C_3>0$. By the change of variable and using the compact embedding of $\W \hookrightarrow L^p(\Omega) \times L^p(\Omega)$, we get 
\begin{align*}
    \omega_d C_3 \leq r_n^{-sp}  \int_{B(y_n, r_n)} \left( |u_n(y)|^p + |v_n(y)|^p \right) \dy & = \int_{B(0,1)} \left( |\tilde{u}_n(y)|^p + |\tilde{v}_n(y)|^p \right) \dy \\
    & \longrightarrow \int_{B(0,1)} \left( |\tilde{u}(y)|^p + |\tilde{v}(y)|^p \right) \dy, \text{ as } n \ra \infty.
\end{align*}
Therefore, $\tilde{u}, \tilde{v} \neq 0$.

By Brezis Lieb lemma, 
    \begin{align}
        1&=S_{\al,\be}^{-1}\lim_{n\to\infty}\|(\tilde{u}_n,\tilde{v}_n)\|_{\W}^p=S_{\al,\be}^{-1}\varlimsup_{n\to\infty}\|(\tilde{u}_n-\tilde{u},\tilde{v}_n-\tilde{v})\|_{\W}^p+S_{\al,\be}^{-1}\|(\tilde{u},\tilde{v})\|_{\W}^p\label{para1}\\
1&=\lim_{n\to\infty}\int_{\R^d}|\tilde{u}_n|^\al|\tilde{v}_n|^\be\dx=\lim_{n\to\infty}\int_{\R^d}|\tilde{u}_n-\tilde{u}|^\al|\tilde{v}_n-\tilde{v}|^\be\dx+\int_{\R^d}|\tilde{u}|^\al|\tilde{v}|^\be\dx\label{para2}.
    \end{align}
    By \eqref{para1} and the fact that $\tilde{u},\tilde{v}\neq0$,
    $$S_{\al,\be}^{-1}\|(\tilde{u},\tilde{v})\|_{\W}^p\in (0,1],\text{ and }S_{\al,\be}^{-1}\varlimsup_{n\to\infty}\|(\tilde{u}_n-\tilde{u},\tilde{v}_n-\tilde{v})\|_{\W}^p\in[0,1).$$
    Now, by the definition of $S_{\al,\be}$, from \eqref{para2}, 
    \begin{align*}
        1&\leq \left(S_{\al,\be}^{-1}\,\varlimsup_{n\to\infty}\|(\tilde{u}_n-\tilde{u},\tilde{v}_n-\tilde{v})\|_{\W}^p\right)^{\frac{p_s^*}p}+\left(S_{\al,\be}^{-1}\|(\tilde{u},\tilde{v})\|_{\W}^p\right)^{\frac{p_s^*}p}\\
        &\leq S_{\al,\be}^{-1}\varlimsup_{n\to\infty}\|(\tilde{u}_n-\tilde{u},\tilde{v}_n-\tilde{v})\|_{\W}^p+S_{\al,\be}^{-1}\|(\tilde{u},\tilde{v})\|_{\W}^p=1.
    \end{align*}
    Here, we use the fact that for $a\in[0,1]$ and $t\geq1$, $a^t\leq a$.  
    Thus, all the previous inequalities are equal. But the equality in $a^t\leq a$, where $a\in[0,1]$ and $t\geq1$, can happen only if $a=0$ or $1$. Hence, we conclude that $\|(\tilde{u},\tilde{v})\|_{\W}^p=S_{\al,\be}$ and $\varlimsup_{n\to\infty}\|(\tilde{u}_n-\tilde{u},\tilde{v}_n-\tilde{v})\|_{\W}^p=0$. As a result, $(\tilde{u}_n,\tilde{v}_n)\to(\tilde{u},\tilde{v})$ in $\W(\rd)$.
    
Next, we prove the claims about $y_n$ and $r_n$. From \eqref{PSD-3} for large $n$,
$$0<C_3=r_n^{d-sp}\fint_{B(y_n,r_n)} (|{u}_n|^p+|{v}_n|^p)\dx \le Cr_n^{-sp}\int_{\Om} (|{u}_n|^p+|{v}_n|^p)\dx.$$ If $r_n\to\infty$, then using the fact that $\{u_n\}, \{v_n\}$ are bounded in $L^p(\Om)$, we get a  contradiction. Thus, $\{r_n\}$ is a bounded sequence. Since $u_n,v_n$ vanish in $\rd \setminus \Om$, using \eqref{PSD-3}, we have $y_n\in\{x\in\R^d:d(x,\ov{\Om})\leq r_n\}$. Therefore, $\{y_n\}$ is bounded in $\R^d$. Suppose, $r_n\to r_0>0$ and $y_n\to y\in\R^d$. Notice that $\tilde{u}_n$ and $\tilde{v}_n$ are supported on $\frac{\Om-y_n}{r_n}$ and as $n\to\infty$, $$\frac{\Om-y_n}{r_n}\to \frac{\Om-y}{r_0}.$$ Since $(\tilde{u}_n,\tilde{v}_n)\to(\tilde{u},\tilde{v})$ in $\W(\rd)$, $(\tilde{u},\tilde{v})$ satisfies, 
$$\int_{\frac{\Om-y}{r_0}}{|\tilde{u}|^\alpha|\tilde{v}|^{\beta}}\dx=1, \text{ and } \|(\tilde{u},\tilde{v})\|_{\W}^p=S_{\alpha,\beta}.$$
Thus, $(\tilde{u},\tilde{v})$ minimizes $S_{\alpha,\beta}$ on a bounded set, which is absurd. Hence $r_n\to0$ and since $y_n\in\{x\in\R^d:d(x,\ov{\Om})\leq r_n\}$, we must have $y\in\ov{\Om}$.
\end{proof}

\section{Concentration compactness principle associated with the homogeneous system}\label{conc_comp}
For $u \in L_{\text{loc}}^1(\rd)$ and a.e. $x \in \rd$ we consider the $(s,p)$-gradient of $u$ as 
\begin{equation*}
    \abs{D^su}^p(x) := \int_{\rd} \frac{\abs{u(x)-u(y)}^p}{\abs{x-y}^{d+sp}} \, \dy. 
\end{equation*}
In \cite[Theorem 2.5]{SuMa} (for $m=0$) and \cite[Theorem 1.1]{AlPa} (for $m>0$), the authors proved a concentration compactness principle to get the exact behaviour of weakly convergent sequences of $\wps$ in the space of measures. More preciously, they have shown that if $u_n \rightharpoonup u$ in $\wps$, then there exist two non-negative measures $\sigma,\nu$ and at most countable set $\{x_j\}_{j \in \mathcal{J}} \subset \overline{\Omega}$ such that as $n \ra \infty$,
\begin{align*}
    \abs{D^su_n}^p \overset{\ast}{\rightharpoonup} \sigma, \frac{\abs{u_n}^{\psm}}{\abs{x}^m} \dx \overset{\ast}{\rightharpoonup} \nu, 
\end{align*}
where 
\begin{align}\label{cc-recall-1}
    \sigma \ge \abs{D^su}^p + \sum_{j \in \mathcal{J}} \sigma_j \delta_{x_j}, \, \sigma_j = \sigma(\{ x_j \}), \nu = \frac{\abs{u}^{\psm}}{\abs{x}^m} \dx + \sum_{j \in \mathcal{J}} \nu_j \delta_{x_j}, \, \nu_j = \nu(\{ x_j \}),
\end{align}
with $\sigma_j \ge S_{\al+\be} \left( \nu_j \right)^{\frac{p}{\psm}}$, where $\delta_{x_j}(E) = 1$ if $x_j \in E$ and $\delta_{x_j}(E) = 0$ if $x_j \not \in E$ for every measurable set $E \subset \rd$.  Further, in the subcritical case $m>0$, they showed that $\{ x_j \}_{j \in \mathcal{J}} = \{0\}$. Motivated by these results, we provide a concentration compactness principle to study the behaviour of the map $G$. We would like to mention that the boundedness of $\Omega$ plays a vital role. For $m=0$, Lu and Shen studied a similar concentration compactness principle in \cite[Lemma 5.1]{LuShen2020}.

Now, we recall the definition of the tightness of measures.  
\begin{definition}
 Let $\mathcal{S}$ be a $\sigma$-algebra of subsets of $\rd$ and  $\{ \mu_n \}$ be a sequence of measures defined on $\mathcal{S}$. The collection $\{ \mu_n \}$ is called `tight', if for every $\var>0$, there exists $K_{\var} \Subset \rd$ such that
    \begin{equation*}
        \sup_{n} \mu_n (\rd \setminus K_{\var}) < \var. 
    \end{equation*}
\end{definition}

The following lemma states the convergences of some integrals, which are direct consequences of the Brezis-Lieb lemma (see \cite[Theorem 1]{brezis1983relation}).

\begin{lemma}[Brezis-Lieb lemma]\label{Brezis-Lieb} 
Let  $0\le m \le sp < d$ and $\al, \be > 1$ be such that $\al + \be = p^*_s(m)$. Let $\{ (u_n, v_n) \}$ weakly converges to $ {(u,v)}$ in $\mathcal{W}$. Then up to a subsequence, the following identities hold:  
\begin{enumerate}
    % \item[{\rm(i)}] $\norm{u_n}_{\wps}^p = \norm{u_n - u}_{\wps}^p + \norm{u}_{\wps}^p + o_n(1)$, and $\norm{v_n}_{\wps}^p = \norm{v_n - v}_{\wps}^p + \norm{v}_{\wps}^p + o_n(1)$.
    \item[{\rm(i)}] For every $\phi \in \wps$, 
    \begin{align*}
        &\int_{\rd} \abs{D^s u_n}^p \phi \dx = \int_{\rd} \abs{D^s (u_n-u)}^p \phi \dx + \int_{\rd} \abs{D^s u}^p \phi \dx + o_n(1), \text{ and } \\
        &\int_{\rd} \abs{D^s v_n}^p  \phi \dx = \int_{\rd} \abs{D^s (v_n-v)}^p \phi \dx + \int_{\rd} \abs{D^s v}^p \phi \dx + o_n(1).
    \end{align*}
    \item[{\rm(ii)}] For every $\phi \in \wps$, 
    \begin{align*}
        \int_{\Omega} \abs{u_n}^{\al}\abs{v_n}^{\be} \phi \frac{\dx}{\abs{x}^m} =\int_{\Omega} \left|\abs{u_n}^{\al}\abs{v_n}^{\be} - \abs{u}^{\al}\abs{v}^{\be}\right| \phi \frac{\dx}{\abs{x}^m} + \int_{\Omega} \abs{u}^{\al}\abs{v}^{\be} \phi \frac{\dx}{\abs{x}^m} + o_n(1).
    \end{align*}
    \item[{\rm(iii)}] It holds 
    \begin{align*}
        [u_n]_{s,p}^p = [u_n-u]_{s,p}^p + [u]_{s,p}^p + o_n(1), \text{ and } [v_n]_{s,p}^p = [v_n-v]_{s,p}^p + [v]_{s,p}^p + o_n(1).
    \end{align*}
\end{enumerate}
\end{lemma}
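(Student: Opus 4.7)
The plan is to treat all three identities as instances of the classical Brezis-Lieb lemma on carefully chosen sequences, after first extracting an a.e.\ convergent subsequence using the compact embedding $\wps \embd L^p(\Omega)$, valid because $\Omega$ is bounded. Along such a subsequence one has $u_n\to u$ and $v_n\to v$ a.e.\ in $\Omega$, and in particular $u_n(x)-u_n(y)\to u(x)-u(y)$ a.e.\ in $\rd\times\rd$. This pointwise convergence is the only nontrivial input beyond the weak convergence already assumed.

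For part (iii), I would view $U_n(x,y):=\bigl(u_n(x)-u_n(y)\bigr)|x-y|^{-(d+sp)/p}$ as a sequence in $L^p(\rd\times\rd)$. The weak convergence $u_n\wra u$ in $\wps$ makes $\{U_n\}$ bounded in $L^p(\rd\times\rd)$, and the a.e.\ convergence above forces $U_n\to U$ a.e.\ in $\rd\times\rd$, where $U$ is the analogous object associated to $u$. The Brezis-Lieb lemma in $L^p(\rd\times\rd)$ then gives $\|U_n\|_p^p-\|U_n-U\|_p^p-\|U\|_p^p\to0$, which is precisely the Gagliardo decomposition $[u_n]_{s,p}^p=[u_n-u]_{s,p}^p+[u]_{s,p}^p+o_n(1)$, and the same argument applies to $v_n$. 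For part (i), the same trick works after inserting an extra factor $\phi(x)^{1/p}$ (after splitting $\phi=\phi_+-\phi_-$ to reduce to $\phi\ge0$): apply Brezis-Lieb to $F_n(x,y):=U_n(x,y)\phi(x)^{1/p}$ in $L^p(\rd\times\rd,\dxy)$, equivalently to $U_n$ in the weighted space $L^p(\rd\times\rd,\phi(x)\dxy)$. Uniform boundedness of $\|F_n\|_p^p=\int|D^s u_n|^p\phi\dx$ follows from a H\"older-type estimate combined with the Hardy-Sobolev inequalities already recorded, provided $\phi$ has sufficient integrability.

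For part (ii), the claim is the $L^1$-Brezis-Lieb identity for the sequence $|u_n|^\al|v_n|^\be$ weighted by $\phi\,\dx/|x|^m$. I would reduce to Lemma~\ref{convergence2} (the unweighted case $\phi\equiv 1$) by applying the general Lemma~\ref{convergence1} with the measure $\dx/|x|^m$ replaced by $\phi\,\dx/|x|^m$, $j(z_1,z_2):=|z_1|^\al|z_2|^\be$, and majorants $\phi_\ep(z_1,z_2)=\psi_\ep(z_1,z_2):=|z_1|^{\al+\be}+|z_2|^{\al+\be}$. Hypotheses (i)--(iv) of Lemma~\ref{convergence1} remain valid uniformly in $n$ thanks to the Hardy-Sobolev inequalities \eqref{HS1.5}, \eqref{HS3} and the boundedness of $\{u_n\},\{v_n\}$ in $\wps$. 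The main obstacle I anticipate is precisely in verifying the integrability hypothesis (iii) of Lemma~\ref{convergence1} uniformly in $n$ and $\ep$: this is where the required regularity of the test function $\phi$ (e.g.\ $\phi\in L^\infty$) enters, and where one must be careful to control the interaction between the singular weight $|x|^{-m}$ and the test function. Once that is handled, the three decompositions follow simultaneously from the unified Brezis-Lieb philosophy.
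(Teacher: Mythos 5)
Your proposal is correct and follows essentially the same route as the paper, which gives no separate argument for this lemma and simply records items (i)--(iii) as direct consequences of the classical Brezis--Lieb lemma: that is exactly what you carry out, applying Brezis--Lieb in $L^p(\rd\times\rd)$ to the Gagliardo kernel $(u_n(x)-u_n(y))|x-y|^{-(d+sp)/p}$ (with the extra weight $\phi(x)$ for part (i)) and the generalized version (Lemma \ref{convergence1}) with the measure $\phi\,\dx\,\abs{x}^{-m}$ for part (ii). Your caveat that $\phi$ must be bounded (rather than an arbitrary element of $\wps$) is consistent with how the lemma is actually invoked in the paper, where the test functions are taken from $\C_b(\rd)$ (e.g.\ $1-\psi_R$), so it does not constitute a gap in your argument.
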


\begin{proposition}[Concentration compactness principle]\label{CCP}
    Let $\Omega \subset \R^d$ be a bounded open set containing $0$. Let $0\le m \le sp$ and $\al, \be > 1$ be such that $\al + \be = p^*_s(m)$. Let $\{ u_n \}, \{ v_n \}$ be sequences in $\wps$ such that $u_n \rightharpoonup u$ and $v_n \rightharpoonup v$ in $\wps$. Then there exist measures $\sigma_1, \sigma_2,\nu_1, \nu_2, \nu$ and at most countable sets $\{ x_j \}_{j \in \mathcal{J}_1}$, $\{ x_j \}_{j \in \mathcal{J}_2}$, $\{ x_j \}_{j \in \mathcal{J}_3}$, such that up to subsequences,
    \begin{align*}
        & \abs{D^su_n}^p \overset{\ast}{\rightharpoonup} \sigma_1, \abs{D^s v_n}^p \overset{\ast}{\rightharpoonup} \sigma_2, \frac{\abs{u_n}^{\psm}}{\abs{x}^m} \dx \overset{\ast}{\rightharpoonup} \nu_1,  \frac{\abs{v_n}^{\psm}}{\abs{x}^m} \dx \overset{\ast}{\rightharpoonup} \nu_2, \text{ and } \frac{\abs{u_n}^{\al} \abs{v_n}^{\be}}{\abs{x}^m} \dx \overset{\ast} {\rightharpoonup} \nu,
    \end{align*}
    in the dual space of $\C_b(\rd)$, where 
    \begin{align*}
        &\sigma_1 \ge \abs{D^su}^p + \sum_{j \in \mathcal{J}_1} (\sigma_1)_j \delta_{x_j}, \; \sigma_2 \ge \abs{D^sv}^p + \sum_{j \in \mathcal{J}_2} (\sigma_2)_j \delta_{x_j}, \\
        & \nu_1 = \frac{\abs{u}^{\psm}}{\abs{x}^m} \dx + \sum_{j \in \mathcal{J}_1} (\nu_1)_j \delta_{x_j}, \; \nu_2 = \frac{\abs{v}^{\psm}}{\abs{x}^m} \dx + \sum_{j \in \mathcal{J}_2} (\nu_2)_j \delta_{x_j}, \text{ and } \\
        & \nu = \frac{\abs{u}^{\al} \abs{v}^{\be}}{\abs{x}^m} \dx +  \sum_{j \in \mathcal{J}_3} \nu_j \delta_{x_j},
    \end{align*}
    and the following relation holds
    \begin{align*}
       &(\sigma_1)_j \ge S_{\al + \be} ((\nu_1)_j)^{\frac{p}{p_s^*(m)}}, \; \forall \, j \in \mathcal{J}_1, \\
       &(\sigma_2)_j \ge S_{\al + \be} ((\nu_2)_j)^{\frac{p}{p_s^*(m)}},  \; \forall \, j \in \mathcal{J}_2, \text{ and } \\
       &(\sigma_1)_j+ (\sigma_2)_j \ge S_{\al, \be} (\nu_j)^{\frac{p}{p_s^*(m)}},  \; \forall \, j \in \mathcal{J}_3.
    \end{align*}
    Further, if $m>0$ then each set $\{ x_j \}_{j \in \mathcal{J}_1}, \{ x_j \}_{j \in \mathcal{J}_2}$ and $\{ x_j \}_{j \in \mathcal{J}_3}$ is $\{0\}$. Define
    \begin{align*}
        (\sigma_1)_{\infty} := \lim_{R \ra \infty} \lim_{n \ra \infty} \int_{\abs{x}>R} \abs{D^s u_n}^p \dx, \text{ and } (\sigma_2)_{\infty} := \lim_{R \ra \infty} \lim_{n \ra \infty} \int_{\abs{x}>R} \abs{D^s v_n}^p \dx.
    \end{align*}
    If
    $\abs{D^s(u_n-u)}^p \overset{\ast}{\rightharpoonup} \d \tilde{\sigma_1}$, $\abs{D^s(v_n-v)}^p \overset{\ast}{\rightharpoonup} \d \tilde{\sigma_2}$, then the following identities hold:
    \begin{align*}
        &\lim_{n \ra \infty} \int_{\rd} \abs{D^su_n}^p \dx = (\sigma_1)_{\infty} + \int_{\rd} \abs{D^su}^p  \dx  + \norm{\tilde{\sigma_1}}, \text{ and} \\
        &\lim_{n \ra \infty} \int_{\rd} \abs{D^sv_n}^p  \dx = (\sigma_2)_{\infty} + \int_{\rd} \abs{D^s v}^p \dx+ \norm{\tilde{\sigma_2}},
    \end{align*}
    where $\norm{\tilde{\sigma_1}} = \int_{\rd} \d \tilde{\sigma_1}, \text{ and } \norm{\tilde{\sigma_2}} = \int_{\rd} \d \tilde{\sigma_2}.$
\end{proposition}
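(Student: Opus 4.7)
The plan is to reduce to the scalar concentration compactness principles of \cite{SuMa,AlPa} applied separately to $\{u_n\}$ and $\{v_n\}$, and to handle the joint measure $\nu$ by a localisation argument built on the definition of $S_{\alpha,\beta}$.

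\emph{Scalar reductions.} Since $\{u_n\},\{v_n\}$ are bounded in $\wps$, the principles of \cite[Theorem 2.5]{SuMa} (case $m=0$) and \cite[Theorem 1.1]{AlPa} (case $0<m\le sp$) applied to each sequence produce, up to a subsequence, measures $\sigma_1,\nu_1$ and $\sigma_2,\nu_2$ satisfying the stated atomic decompositions and the single-component inequalities $(\sigma_i)_j\ge S_{\alpha+\beta}((\nu_i)_j)^{p/p_s^*(m)}$. The same references furnish $\{x_j\}_{j\in\mathcal J_i}=\{0\}$ when $m>0$, coming from the compactness of $\wps\hookrightarrow L^{p_s^*(m)}(U,|x|^{-m}\dx)$ for every $U\Subset\R^d\setminus\{0\}$.

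\emph{The joint measure and cross inequality.} The Hardy--Sobolev bound \eqref{HS2} gives that $\{|u_n|^\alpha|v_n|^\beta|x|^{-m}\}$ is bounded in $L^1(\Omega)$, so Banach--Alaoglu extracts a positive Radon measure $\nu$ on $\overline{\Omega}$ with $|u_n|^\alpha|v_n|^\beta|x|^{-m}\dx\overset{\ast}{\rightharpoonup}\nu$. Testing against $\phi\in C_b(\R^d)$ and invoking Lemma \ref{convergence2} identifies the defect measure $\tilde\nu:=\nu-|u|^\alpha|v|^\beta|x|^{-m}\dx\ge 0$ as the weak-$\ast$ limit of $|u_n-u|^\alpha|v_n-v|^\beta|x|^{-m}\dx$. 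The main technical step, and the principal obstacle I expect, is to show that $\tilde\nu$ is purely atomic and to establish the pointwise bound $S_{\alpha,\beta}(\tilde\nu(\{x_0\}))^{p/p_s^*(m)}\le\tilde\sigma_1(\{x_0\})+\tilde\sigma_2(\{x_0\})$ at every $x_0\in\overline\Omega$. Fix such an $x_0$ and a cut-off $\phi_\varepsilon\in C_c^\infty(\R^d)$ with $\phi_\varepsilon(x_0)=1$, $\supp\phi_\varepsilon\subset B_\varepsilon(x_0)$, $0\le\phi_\varepsilon\le 1$. Apply the Hardy--Sobolev inequality defining $S_{\alpha,\beta}$ to $(\phi_\varepsilon(u_n-u),\phi_\varepsilon(v_n-v))\in\W$:
\begin{equation*}
S_{\alpha,\beta}\!\left(\int_\Omega\frac{|\phi_\varepsilon(u_n-u)|^\alpha|\phi_\varepsilon(v_n-v)|^\beta}{|x|^m}\,\dx\right)^{\!\frac{p}{p_s^*(m)}}\!\!\le [\phi_\varepsilon(u_n-u)]_{s,p}^p+[\phi_\varepsilon(v_n-v)]_{s,p}^p.
\end{equation*}
A standard commutator estimate for the Gagliardo seminorm combined with Rellich--Kondrachov compactness ($u_n\to u$, $v_n\to v$ in $L^p_{\mathrm{loc}}$) lets us replace the right-hand side in the limit $n\to\infty$ by $\int\phi_\varepsilon^p\,d\tilde\sigma_1+\int\phi_\varepsilon^p\,d\tilde\sigma_2$, while the left-hand side tends to $S_{\alpha,\beta}(\int\phi_\varepsilon^{p_s^*(m)}\,d\tilde\nu)^{p/p_s^*(m)}$. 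Letting $\varepsilon\downarrow 0$ gives the claimed pointwise bound, and a Lions-type reverse-H\"older lemma (since $p_s^*(m)/p>1$) forces $\tilde\nu$ to be concentrated on the at most countable set of atoms of $\tilde\sigma_1+\tilde\sigma_2$, yielding both the decomposition of $\nu$ and the cross inequality. For $m>0$, the same compactness argument away from $0$ forces $\mathcal J_3=\{0\}$.

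\emph{Mass at infinity.} Starting from Brezis--Lieb (Lemma \ref{Brezis-Lieb}(iii)), $[u_n]_{s,p}^p=[u_n-u]_{s,p}^p+[u]_{s,p}^p+o_n(1)$. Introduce a truncation $\chi_R\in C^\infty(\R^d)$ equal to $0$ on $B_R$ and $1$ outside $B_{2R}$; splitting $\int_{\R^d}|D^s(u_n-u)|^p\,\dx$ via $1=(1-\chi_R)+\chi_R$, passing to the limit $n\to\infty$ (the $(1-\chi_R)$-piece converges by weak-$\ast$ convergence to $\tilde\sigma_1$) and then $R\to\infty$ separates the ``local'' mass $\|\tilde\sigma_1\|$ from the mass-at-infinity $(\sigma_1)_\infty$, giving the identity for $u_n$; the argument for $v_n$ is identical.
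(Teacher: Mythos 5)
Your plan follows essentially the same route as the paper: scalar decompositions imported from \cite{SuMa,AlPa}, identification of the defect of the mixed term via Lemma \ref{convergence2}, a localized application of the inequality defining $S_{\al,\be}$ plus Lions' reverse-H\"older lemma to obtain atomicity and the cross inequality, and Brezis--Lieb plus truncation for the mass at infinity. There is, however, one point where the proposal falls short of the statement as written: the weak-$\ast$ convergence is claimed in the dual of $\C_b(\rd)$. For $\nu,\nu_1,\nu_2$ this is immediate because the densities are supported in the compact set $\ov{\Om}$, but $\abs{D^su_n}^p$ and $\abs{D^sv_n}^p$ are strictly positive on all of $\rd$, so passing to a limit against bounded, non-compactly-supported test functions requires tightness, which neither Banach--Alaoglu nor the scalar results you cite (convergence in the space of measures) supply by themselves. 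This is precisely where the paper uses the boundedness of $\Om$: since $u_n=0$ outside $\Om$, one has $\abs{D^su_n}^p(x)\le C\abs{x}^{-(d+sp)}$ uniformly in $n$ for $x$ far from $\ov{\Om}$, hence $\sup_n\int_{\abs{x}>R}\abs{D^su_n}^p\dx\le CR^{-sp}$, and Prokhorov's theorem then gives convergence in the dual of $\C_b(\rd)$. You should add this step (it also shows, incidentally, that $(\sigma_i)_\infty=0$ in the present setting).

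Two smaller points. In the mass-at-infinity identity your tail term is $\lim_{R\ra\infty}\lim_{n\ra\infty}\int\chi_R\abs{D^s(u_n-u)}^p\dx$, while $(\sigma_1)_\infty$ is defined through $\abs{D^su_n}^p$; to identify the two you need the localized Brezis--Lieb identity (Lemma \ref{Brezis-Lieb}(i), tested with the cutoff) together with $\int\chi_R\abs{D^su}^p\dx\ra0$ as $R\ra\infty$ --- this is exactly how the paper's Step 3 is organized --- or, alternatively, the decay bound above. Finally, your cross inequality is obtained for $\tilde\sigma_1(\{x_0\})+\tilde\sigma_2(\{x_0\})$, whereas the proposition involves $(\sigma_1)_j+(\sigma_2)_j$; this is harmless, but you should record that $\sigma_i$ and $\tilde\sigma_i$ differ by the absolutely continuous measures $\abs{D^su}^p\dx$ and $\abs{D^sv}^p\dx$ (again Lemma \ref{Brezis-Lieb}(i)), so their atoms coincide. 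The paper avoids this conversion by running the localized estimate directly on $(\phi_\ep u_n,\phi_\ep v_n)$, keeping the extra term $C_\theta\int\abs{D^s\phi_\ep}^p(\abs{u}^p+\abs{v}^p)\dx$ and letting it vanish as $\ep\ra0$; both routes work.
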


\begin{proof}
  For $\{ u_n \}$ and $\{ v_n \}$ as given in the hypothesis, consider the following measures: 
 \begin{align*}
      &(\sigma_1)_n := \abs{D^su_n}^p \dx, \; (\sigma_2)_n := \abs{D^sv_n}^p \dx, \text{ and } \nu_n := \frac{\abs{u_n}^{\al} \abs{v_n}^{\be}}{\abs{x}^m} \dx. 
 \end{align*}
In view of \eqref{cc-recall-1}, it is enough to analyze the weak star convergence of the sequence of measures $\{ \nu_n \}$.
Since $\{ u_n \}$ and $\{ v_n \}$ are bounded in $\wps$, using \eqref{HS} and \eqref{HS2} it follows that
\begin{align*}
      &(\sigma_1)_n(\rd) = [u_n]_{s,p}^p \le C, \; (\sigma_2)_n(\rd) = [v_n]_{s,p}^p \le C, \text{ and } \nu_n(\Omega) \le C [u_n]^{\al}_{s,p} [v_n]^{\be}_{s,p} \le C.
\end{align*}
Thus the sequences $\{ (\sigma_1)_n \}$, $\{ (\sigma_2)_n \}$, and $\{ \nu_n \}$ are uniformly bounded. Now, we verify the tightness of these sequences. Since $u_n=v_n=0$ in $\rd \setminus \Omega$, 
\begin{align*}
\nu_n(\rd \setminus \Omega) = 0,
\end{align*}
for each $n \in \N$. Let $\ep>0$ and $a>1$. Choose $R_{\ep}>1$ such that  $C(d,s,p) (a')^{d+sp} \omega_d R_{\ep}^{-sp} < \ep$ and $\Omega \subset B_{R_{\ep}}(0)$ and then choose $U_{\ep} \subset \rd$ such that $B_{aR_{\ep}}(0) \subset U_{\ep}$ and
$\text{dist}(\rd \setminus U_{\ep},B_{aR_{\ep}}(0)) > 0$. If $x \in \rd \setminus U_{\ep}$, then $\abs{x} = bR_{\ep}$ where $b>a$. Now for $x \in \rd \setminus U_{\ep}$ and $y \in \Omega$, we have 
\begin{equation*}
    \abs{x-y}^{d+sp} \ge \abs{\abs{x} - \abs{y}}^{d+sp} \ge \abs{ \abs{x} - R_{\ep}}^{d+sp} = \abs{(b-1)R_{\ep}}^{d+sp} = \left( \frac{\abs{x}}{b'} \right)^{d+sp},
\end{equation*}
Hence using the embedding $\wps \hookrightarrow L^p(\Omega)$, 
\begin{equation*}
    \abs{D^su_n}^p(x) = \int_{\rd} \frac{\abs{u_n(y)}^p}{\abs{x-y}^{d+sp}} \, \dy \le (b')^{d+sp} \int_{\Omega} \frac{\abs{u_n(y)}^p}{\abs{x}^{d+sp}} \, \dy \le \left( \frac{b'}{\abs{x}} \right)^{d+sp} C [u_n]_{s,p}^p \le  \left( \frac{b'}{\abs{x}} \right)^{d+sp} C, 
\end{equation*}
where $C=C(d,s,p)$ is the embedding constant. Using the above estimate, we see that for each $n \in \N$,
\begin{align*}
    (\sigma_1)_n(\rd \setminus U_{\ep}) \le C (b')^{d+sp} \int_{\rd \setminus U_{\ep}} \frac{\dx}{\abs{x}^{d+sp}} & \le C (b')^{d+sp} \int_{\rd \setminus B_{R_{\ep}}(0)} \frac{\dx}{\abs{x}^{d+sp}} \\
    & = C (b')^{d+sp} \omega_d {R_{\ep}}^{-sp} \\
    & \le C (a')^{d+sp} \omega_d {R_{\ep}}^{-sp} < \ep. 
\end{align*}
This follows that $\{ (\sigma_1)_n \}$ is tight. Using similar arguments, we also get $\{ (\sigma_2)_n \}$ is tight. Therefore, applying Prokhorov’s theorem, there exist measures $\sigma_1, \sigma_2, \nu$ such that
\begin{align}\label{cc1}
    (\sigma_1)_n \overset{\ast}{\rightharpoonup} \sigma_1, \;(\sigma_2)_n \overset{\ast}{\rightharpoonup} \sigma_2, \text{ and } \frac{\abs{u_n}^{\al} \abs{v_n}^{\be}}{\abs{x}^m} \dx  \overset{\ast}{\rightharpoonup} \nu,
\end{align}
respectively in the dual space of $\C_b(\R^d)$. In view of \cite[Theorem 2.5]{SuMa} (for $m=0$) and \cite[Theorem 1.1]{AlPa} (for $m>0$), we have 
\begin{align*}
    \sigma_1 \ge \abs{D^su}^p + \sum_{j \in \mathcal{J}_1} (\sigma_1)_j \delta_{x_j}, \; \sigma_2 \ge \abs{D^sv}^p + \sum_{j \in \mathcal{J}_2} (\sigma_2)_j \delta_{x_j},
\end{align*}
where if $m>0$ then each set $\{ x_j \}_{j \in \mathcal{J}_1}$ and $\{ x_j \}_{j \in \mathcal{J}_2}$ is $\{0\}$.

\noi \textbf{Step 1:} In this step we show that 
\begin{align}\label{III}
    \nu_n \overset{\ast} {\rightharpoonup} \nu + \frac{\abs{u}^{\al}\abs{v}^{\be}}{\abs{x}^m}, \text{ where } \nu = \sum_{j \in \mathcal{J}_3} \nu_j \delta_{x_j}, \text{ and for $m>0$, $\{ x_j \}_{j \in \mathcal{J}_3} = \{0\}$. }
\end{align}
We first assume $u=v=0$. Take $\phi \in \C_c^{\infty}(\rd)$ and $\phi \ge 0$. Let $\theta>0$. From the definition of  $S_{\al, \be}$ and the inequality given in \cite[(2.1)]{SuMa}, we get 
\begin{align}\label{cc2}
    &S_{\al, \be} \left( \int_{\rd} \frac{\abs{u_n \phi}^{\al} \abs{v_n \phi}^{\beta}}{\abs{x}^m} \dx \right)^{\frac{p}{\psm}} \le [u_n \phi]_{s,p}^p + [v_n \phi]_{s,p}^p \no \\
    & \le (1+\theta)\int_{\rd} \phi^p \left( \abs{D^s u_n}^p + \abs{D^s v_n}^p  \right) \dx + C_{\theta} \int_{\rd} \abs{D^s \phi}^p \left( \abs{u_n}^p + \abs{v_n}^p \right) \dx. 
\end{align}
for some $C_{\theta} > 0$. Further, using \cite[Lemma 2.3]{SaSu}, $\abs{D^s \phi}^p \in L^{\infty}(\rd)$. Moreover, by the compact embeddings of $\wps$ into $L^p(\Omega)$ and the fact that $u_n,v_n \in \wps$, we get $u_n \ra 0$ and $v_n \ra 0$ in $L^p(\rd)$. Thus
 \begin{align*}
     \int_{\rd} \abs{D^s \phi}^p \left( \abs{u_n}^p + \abs{v_n}^p \right) \dx \le \norm{\abs{D^s \phi}^p}_{{L^{\infty}(\rd)}} \left( \norm{u_n}^p_{L^p(\rd)} + \norm{v_n}^p_{L^p(\rd)} \right) \ra 0, \text{ as } n \ra \infty.
 \end{align*}
Therefore, taking the limit as $n \ra \infty$ in \eqref{cc2} and using \eqref{cc1} for every $\theta > 0$ we obtain 
\begin{align*}
   S_{\al, \be} \left( \int_{\rd} \phi^{p^*_s(m)} \, \d \nu \right)^{\frac{p}{\psm}} \le (1+\theta) \int_{\rd} \phi^p (\d \sigma_1 + \d \sigma_2).
\end{align*}
Taking $\theta \ra 0$ in the above estimate, 
\begin{align}\label{cc3}
   S_{\al, \be} \left( \int_{\rd} \phi^{p^*_s(m)} \, \d \nu \right)^{\frac{p}{\psm}} \le \int_{\rd} \phi^p (\d \sigma_1 + \d \sigma_2).
\end{align}
Now we consider the non-negative measure $\sigma = \sigma_1 + \sigma_2$ to get from \eqref{cc3} that 
\begin{align*}
    S_{\al, \be} \left( \int_{\rd} \abs{\phi}^{p^*_s(m)} \, \d \nu \right)^{\frac{p}{\psm}} \le \int_{\rd} \abs{\phi}^p \d \sigma, \; \forall \, \phi \in \C_c^{\infty}(\rd).
\end{align*}
The above estimate holds on $L^p(\sigma)$ by the density argument. Now we apply \cite[Lemma 1.2]{LionsI} to obtain $\nu = \sum_{j \in \mathcal{J}_3} \nu_j \delta_{x_j}$. Further, for $m>0$, we choose $\phi \in \C_c^{\infty}(\rd)$, $\phi \ge 0$ such that $0 \not \in \text{supp}(\phi)$. Further, using \eqref{HS2}, we have 
\begin{align*}
    &\int_{\rd} \phi^{p^*_s(m)} \frac{\abs{u_n}^{\al} \abs{v_n}^{\beta}}{\abs{x}^m} \dx \\
    &\le C(d,s,p) \norm{\phi}^{p^*_s(m)}_{L^{\infty}(\rd)} \left( \int_{\Omega \, \cap \, \text{supp}(\phi)} \frac{\abs{u_n}^{p^*_s(m)}}{\abs{x}^m} \dx \right)^{\frac{\al}{p^*_s(m)}} \left( \int_{\Omega \, \cap \, \text{supp}(\phi)} \frac{\abs{v_n}^{p^*_s(m)}}{\abs{x}^m} \dx \right)^{\frac{\be}{p^*_s(m)}} \\
    &\le C(d,s,p) \norm{\phi}^{p^*_s(m)}_{L^{\infty}(\rd)}  \norm{\abs{x}^{-m}}_{L^{\infty}(\text{supp}(\phi))} \norm{u_n}_{L^{p^*_s(m)}(\Omega)}^{\al} \norm{v_n}_{L^{p^*_s(m)}(\Omega)}^{\be}. 
\end{align*}
By the compact embedding of $\wps \hookrightarrow L^{p^*_s(m)}(\Omega)$, and using the above estimate we see that 
\begin{align*}
  \int_{\rd} \phi^{p^*_s(m)} \, \d \nu = \lim_{n \ra \infty}  \int_{\rd} \phi^{p^*_s(m)} \frac{\abs{u_n}^{\al} \abs{v_n}^{\beta}}{\abs{x}^m} \dx = 0,  \text{ for every } \phi \ge 0 \text{ in } \C_c^{\infty}(\rd) \text{ with } 0 \not \in \text{supp}(\phi),
\end{align*}
which implies that $\{ x_j \}_{j \in \mathcal{J}_3} = \{0\}$. Suppose there exists some $x_j \in \mathcal{J}_3$ with $x_j \neq 0$ and $x_j \in \text{supp}(\nu)$, then choosing $\phi \in \cc(\Omega)$ and $\phi \equiv 1$ in a neighbourhood of $x_j$ with support does not contain $0$, we get a contradiction repeating our previous calculation.  Next, for $u\neq 0$ and $v \neq 0$, taking $\tilde{u}_n = u_n - u$ and $\tilde{v}_n = v_n - v$, we immediately get  
\begin{align}\label{nonzerolimit1}
    \frac{\abs{\tilde{u}_n}^{\al} \abs{\tilde{v}_n}^{\be}}{\abs{x}^m} \dx \overset{\ast}{\rightharpoonup} \nu = \sum_{j \in \mathcal{J}} \nu_j \delta_{x_j}, 
\end{align}
where for $m>0$, $\{ x_j \}_{j \in \mathcal{J}_3} = \{0\}$. Take $\phi \in \C_b(\rd)$. Applying Lemma \ref{convergence2},
\begin{align*}
    &\int_{\rd} \left( \abs{u_n}^{\alpha} \abs{v_n}^{\beta} - \abs{u}^{\alpha} \abs{v}^{\beta} - \abs{\tilde{u}_n}^{\alpha} \abs{\tilde{v}_n}^{\beta} \right) \frac{\dx}{\abs{x}^m} = o_n(1).
\end{align*}
The above identity yields 
\begin{align*}
    &\left| \int_{\rd} \left|\abs{u_n}^{\al}\abs{v_n}^{\be} - \abs{u}^{\al}\abs{v}^{\be}\right| \phi \frac{\dx}{\abs{x}^m}- \int_{\rd} \abs{\tilde{u}_n}^{\al} \abs{\tilde{v}_n}^{\be} \phi \frac{\dx}{\abs{x}^m} \right| \\
    &\le \norm{\phi}_{L^{\infty}(\rd)} \int_{\rd} \left| \abs{u_n}^{\al} \abs{v_n}^{\be}- \abs{u}^{\al}\abs{v}^{\be} - \abs{\tilde{u}_n}^{\al} \abs{\tilde{v}_n}^{\be}\right| \frac{\dx}{\abs{x}^m}  = o_n(1).
\end{align*}
Therefore, using \eqref{nonzerolimit1} we obtain 
\begin{align}\label{id-2}
 \lim_{n \ra \infty} \int_{\rd} \left|\abs{u_n}^{\al}\abs{v_n}^{\be} - \abs{u}^{\al}\abs{v}^{\be}\right| \phi \frac{\dx}{\abs{x}^m} = \lim_{n \ra \infty} \int_{\rd} \abs{\tilde{u}_n}^{\al} \abs{\tilde{v}_n}^{\be} \phi \frac{\dx}{\abs{x}^m} = \int_{\rd} \phi \d \nu, 
\end{align}
where $\nu = \sum_{j \in \mathcal{J}_3} \nu_j \delta_{x_j}$, and for $m>0$, $\{ x_j \}_{j \in \mathcal{J}_3} = \{0\}$. Further, by Lemma \ref{Brezis-Lieb}, 
\begin{align*}
    \int_{\rd} \abs{u_n}^{\al}\abs{v_n}^{\be} \phi \frac{\dx}{\abs{x}^m} =\int_{\rd} \left|\abs{u_n}^{\al}\abs{v_n}^{\be} - \abs{u}^{\al}\abs{v}^{\be}\right| \phi \frac{\dx}{\abs{x}^m} + \int_{\rd} \abs{u}^{\al}\abs{v}^{\be} \phi \frac{\dx}{\abs{x}^m} + o_n(1).
\end{align*}
Clearly, the above identity and \eqref{id-2} immediately give \eqref{III}.

\noi \textbf{Step 2:} Now for a sequence $(u_n, v_n) \rightharpoonup (u,v)$ in $\mathcal{W}$, the compact embeddings of $\wps \hookrightarrow L^p(\Omega)$ yields $u_n \ra u$ and $v_n \ra v$ in $L^p(\Omega)$. Further, using $\abs{D^s \phi}^p \in L^{\infty}(\rd)$ (\cite[Lemma 2.3]{SaSu}), we get 
\begin{align*}
    \lim_{n \ra \infty} \int_{\Omega} \abs{D^s \phi }^p \abs{u_n}^p \dx = \int_{\Omega} \abs{D^s \phi }^p \abs{u}^p \dx \Longrightarrow \lim_{n \ra \infty} \int_{\rd} \abs{D^s \phi }^p \abs{u_n}^p \dx = \int_{\rd} \abs{D^s \phi }^p \abs{u}^p \dx.
\end{align*}
Similarly, we also have $\lim_{n \ra \infty} \int_{\rd} \abs{D^s \phi }^p |v_n|^p \dx = \int_{\rd} \abs{D^s \phi }^p |v|^p \dx.$ Therefore, taking the limit as $n \ra \infty$ in \eqref{cc2}, we obtain the following estimate for every $\phi \in \C_c^{\infty}(\rd)$,
\begin{align}\label{cc6}
    S_{\al, \be} \left( \int_{\rd} \phi^{p^*_s(m)} \, \d \nu \right)^{\frac{p}{\psm}} \le  (1+\theta)\int_{\rd} \phi^p (\d \sigma_1 + \d \sigma_2) + C_{\theta} \int_{\rd} \abs{D^s \phi}^p \left( \abs{u}^p + \abs{v}^p \right) \dx. 
\end{align}
For $j \in \mathcal{J}_3$, choose $\{ x_j \}$. Then choose $\phi \in \C_c^{\infty}(\rd)$ such that $0 \le \phi \le 1$, $\phi(x_j) = 1$ and $\text{supp}(\phi) = B_{1}(x_j)$.  For $\ep >0$, consider $\phi_{\ep}(x) = \phi(\frac{x}{\ep})$. From the fact that $\nu \ge \nu_j \delta_{x_j}$ we obtain from \eqref{cc6} that 
\begin{align}\label{cc7}
    S_{\al, \be} \left(\nu_j\right)^{\frac{p}{\psm}} \le  (1+ \theta) \sigma(B_{\ep}(x_j)) + C_{\theta} \int_{\rd} \abs{D^s \phi_{\ep}}^p \left( \abs{u}^p + \abs{v}^p \right) \dx,
\end{align}
where $\nu_j = \nu (\{ x_j\})$. Further, using a similar set of arguments as given in the proof of \cite[Theorem 1.1 (pg. 436)]{AlPa}, we obtain 
\begin{align*}
    \lim_{\ep \ra 0} \int_{\rd} \abs{D^s \phi_{\ep}}^p \left( \abs{u}^p + \abs{v}^p \right) \dx = 0.
\end{align*}
Hence taking the limit as $\ep \ra 0$ in \eqref{cc7} we get 
\begin{align*}
    S_{\al, \be} \left(\nu_j\right)^{\frac{p}{\psm}} \le  (1+ \theta) \sigma(\{ x_j \}) = (1+ \theta) \left( \sigma_1(\{ x_j \}) + \sigma_2(\{ x_j \}) \right), \; \forall \, \theta >0. 
\end{align*}
Now taking $\theta \ra 0$ we obtain $(\sigma_1)_j+ (\sigma_2)_j \ge S_{\al, \be} (\nu_j)^{\frac{p}{p_s^*(m)}}$. 

\noi \textbf{Step 3:} For $R>0$, set $\psi_R \in \mathcal{C}_b(\rd)$, defined as $\psi_R(x) = 1$ if $\abs{x}>R$ and $\psi_R(x) = 0$ if $\abs{x} \le \frac{R}{2}$. We write 
\begin{align}\label{cc8}
    \lim_{n \ra \infty} \int_{\rd} \abs{D^s u_n}^p \dx = \lim_{n \ra \infty} \left( \int_{\rd} \psi_R(x) \abs{D^s u_n}^p  \dx + \int_{\rd} \left( 1-\psi_R(x) \right) \abs{D^s u_n}^p \dx \right),
\end{align}
where using Lemma \ref{Brezis-Lieb}, 
\begin{align*}
   \lim_{n \ra \infty} \int_{\rd} \left( 1-\psi_R(x) \right) \abs{D^s u_n}^p \dx & = \lim_{n \ra \infty} \int_{\rd}  \left( 1-\psi_R(x) \right) \abs{D^s (u_n-u)}^p \dx  \\ & + \int_{\rd} \left( 1-\psi_R(x) \right) \abs{D^s u}^p \dx.
   \end{align*}
Therefore, \eqref{cc8} yields  
\begin{align}\label{cc9}
    \lim_{n \ra \infty} \int_{\rd} & \abs{D^s u_n}^p \dx  = \lim_{n \ra \infty} \int_{\{ \abs{x} > R\}} \abs{D^s u_n}^p \dx \no \\
    &+ \lim_{n \ra \infty} \int_{\rd} \left( 1-\psi_R(x) \right) \abs{D^s (u_n-u)}^p \dx + \int_{\rd} \left( 1-\psi_R(x) \right) \abs{D^s u}^p \dx.
\end{align}
Further, using the dominated convergence theorem, 
\begin{align*}
    \lim_{R \ra \infty} \int_{\rd} \left( 1-\psi_R(x) \right) \abs{D^s u}^p \dx = \int_{\rd} \abs{D^s u}^p \dx.
\end{align*}
Moreover, since $1-\psi_R \in \C_b(\rd)$, we get
\begin{align*}
    \lim_{n \ra \infty} \int_{\rd} \left( 1-\psi_R(x) \right) \abs{D^s (u_n-u)}^p \dx = \int_{\rd} \left( 1-\psi_R(x) \right) \d \tilde{\sigma}_1.
\end{align*}
Again, using the dominated convergence theorem,
\begin{align*}
    \lim_{R \ra \infty} \lim_{n \ra \infty} \int_{\rd} \left( 1-\psi_R(x) \right) \abs{D^s (u_n-u)}^p \dx = \int_{\rd} \d \tilde{\sigma}_1 
\end{align*}
Therefore, taking the limit as $R \ra \infty$ in \eqref{cc9} we obtain 
\begin{align*}
    \lim_{n \ra \infty} \int_{\rd} \abs{D^su_n}^p \dx = (\sigma_1)_{\infty} + \int_{\rd} \abs{D^su}^p  \dx  + \int_{\rd} \d \tilde{\sigma_1}.
\end{align*}
The above identity for the sequence $\{ v_n \}$ similarly holds. This completes the proof.    
\end{proof}

\section{Existence of a nontrivial solution with least energy}\label{exst_nontriv}
For brevity, for $\al+\be = \psm$, we set the following quantity: 
\begin{align*}
    \tilde{c} = \tilde{c}(\al, \be,d,s,m,\gamma):= \left(\frac{1}{p}-\frac{1}{p_{s}^{*}(m)}\right) \frac{S_{\al,\be}^{\frac{d- m}{sp-m}}}{\gamma^{\frac{p}{p_{s}^{*}(m)-p}}}.
\end{align*}  
We also recall $c_0$ defined in \eqref{c0}. Observe that $c_0<\infty$ since $p_{s}^{*}(m)\ge p$. 
\begin{definition}
A sequence $\{(u_n,v_n)\}$ in $\W$ is said to be a PS sequence for $I$ 
 at the level $c$ if $$I(u_n,v_n) \ra c\text{ in }\R,\quad I'(u_n,v_n) \ra 0\text{ in }\mathcal{W}^*\text{ as }n\to\infty.$$We say that $I$ satisfies PS condition at the level $c$ (in short, $\text{(PS)}_c$ condition), if $\{(u_n,v_n)\}$ is a PS sequence for $I$ at the level $c$ then $\{(u_n,v_n)\}$ has a convergent subsequence in $\W$.
\end{definition}

In the following lemma, depending on the values of $r$ and $m$, we see that the energy functional $I$ (defined in \eqref{energy-main}) satisfies the $\text{(PS)}_c$ condition under a certain range of $c$. 
\begin{lemma}[$\text{(PS)}_c$ condition]\label{ps1}
Let $0 \leq m \leq sp, p\leq r <p_{s}^{*}$ and $\max \left\{r, p_{s}^{*}(m)\right\}>p$. Let $\al, \be > 1$ be such that $\al+\be = \psm$. Then the following are true:
\begin{enumerate}
	\item[{\rm(i)}] If $p_{s}^{*}(m)>p$ and $r \in\left(p, p_{s}^{*}\right)$, then for $\gamma,\eta>0$, $I$ satisfies $\text{(PS)}_c$ condition for every $c<\tilde{c}$.
	\item[{\rm(ii)}] If $p_{s}^{*}(m)>p$ and $r=p$ then for $\gamma>0$ and $\eta < \frac{S_{\al+\be}}{c_0}$, $I$ satisfies $\text{(PS)}_c$ condition
    for every $c<\tilde{c}$.
    \item[{\rm(iii)}]  If $p_{s}^{*}(m)=p$ and $r \in\left(p, p_{s}^{*}\right)$, then for $\gamma<S_{\al,\be}$ and $\eta>0$, $I$ satisfies $\text{(PS)}_c$ condition for every $c \in \mathbb{R}$.
\end{enumerate}
\end{lemma}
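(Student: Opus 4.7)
The plan is to follow the standard concentration--compactness strategy: first bound the PS sequence in $\W$, extract a weak limit, invoke Proposition \ref{CCP} in cases (i) and (ii) to capture possible concentration, and show that if $c<\tilde{c}$ no concentration can actually occur. Strong convergence in $\W$ then follows from the monotonicity of the fractional $p$-Laplacian.

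\textbf{Step 1 (Boundedness).} I would combine $I(u_n,v_n)$ with $\frac{1}{\theta}I'(u_n,v_n)(u_n,v_n)$ for a well chosen $\theta$. In case (i), taking $\theta=\min\{r,\psm\}>p$, a direct computation gives
\[
    I(u_n,v_n) - \tfrac{1}{\theta}I'(u_n,v_n)(u_n,v_n) \;\ge\; \left(\tfrac{1}{p}-\tfrac{1}{\theta}\right)\|(u_n,v_n)\|_{\W}^p,
\]
after discarding terms with nonnegative coefficients. In case (ii) ($r=p$), the choice $\theta=\psm$ leaves the critical term with a zero coefficient but produces $\eta(\tfrac{1}{\psm}-\tfrac{1}{p})\int(|u_n|^p+|v_n|^p)\dx$, which I would control by H\"older with weight $|x|^{-m}$ together with \eqref{HS1.5}:
\[
    \int_\Omega|u_n|^p\dx \;\le\; c_0 \left(\int_\Omega\frac{|u_n|^{\psm}}{|x|^m}\dx\right)^{p/\psm} \;\le\; \frac{c_0}{S_{\al+\be}}[u_n]_{s,p}^p,
\]
so that the hypothesis $\eta<S_{\al+\be}/c_0$ keeps the coefficient of $\|(u_n,v_n)\|_{\W}^p$ strictly positive. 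In case (iii), $\psm=p$ forces $m=sp$ and the critical part is itself $p$-homogeneous; picking $\theta=r>p$ and using $\int|u|^\al|v|^\be/|x|^m\dx \le S_{\al,\be}^{-1}\|(u,v)\|_{\W}^p$ together with $\gamma<S_{\al,\be}$ gives boundedness at once.

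\textbf{Step 2 (Weak limit and CCP).} Up to a subsequence $(u_n,v_n)\rightharpoonup(u,v)$ in $\W$, strongly in $L^q(\Omega)\times L^q(\Omega)$ for every $q<p_s^*$, and pointwise a.e. In cases (i) and (ii), where $\psm>p$, Proposition \ref{CCP} provides measures $\sigma_1,\sigma_2,\nu$ and an at most countable set $\{x_j\}\subset\overline{\Omega}$ (equal to $\{0\}$ when $m>0$) with $(\sigma_1)_j+(\sigma_2)_j \ge S_{\al,\be}\,\nu_j^{p/\psm}$. Fix such a $j$ and a smooth cutoff $\phi_\epsilon$ with $\phi_\epsilon(x_j)=1$ and $\mathrm{supp}\,\phi_\epsilon\subset B_\epsilon(x_j)$. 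Testing $I'(u_n,v_n)\to0$ against $(u_n\phi_\epsilon, v_n\phi_\epsilon)$ and letting first $n\to\infty$ and then $\epsilon\to 0$, the subcritical term drops by $L^r$-compactness and the fractional commutator terms produced by $\phi_\epsilon$ vanish using $|D^s\phi_\epsilon|^p\in L^\infty(\rd)$ (as in the proof of Proposition \ref{CCP}); what survives is
\[
    (\sigma_1)_j + (\sigma_2)_j \;\le\; \gamma\,\nu_j.
\]
Coupled with the CCP bound this forces $\nu_j\ge(S_{\al,\be}/\gamma)^{(d-m)/(sp-m)}$ whenever $\nu_j>0$. On the other hand, evaluating
\[
    c = \lim_{n\to\infty}\!\left[I(u_n,v_n) - \tfrac{1}{\psm}I'(u_n,v_n)(u_n,v_n)\right] \;\ge\; \left(\tfrac{1}{p}-\tfrac{1}{\psm}\right)\gamma\,\nu_j \;\ge\; \tilde{c}
\]
(with the residual $\eta(\tfrac{1}{\psm}-\tfrac{1}{r})\int(|u|^r+|v|^r)\dx$ absorbed with the correct sign, using in case (ii) the Hardy--Sobolev bound from Step 1 once more) contradicts $c<\tilde{c}$. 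Hence every $\nu_j=0$, and the coupling functional $G(u_n,v_n)\to G(u,v)$.

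\textbf{Step 3 (Strong convergence; case (iii)).} With the critical nonlinearity now sequentially continuous, $I'(u_n,v_n)(u_n-u,v_n-v)\to 0$ combined with the Simon-type monotonicity inequalities for the fractional $p$-Laplacian upgrades weak to strong convergence in $\W$. For case (iii), $\psm=p$ means that a concentration at some $x_j$ would immediately force $\gamma\ge S_{\al,\be}$, excluded by hypothesis; the $L^r$ compactness ($r>p$) and Step 1 then yield $(\text{PS})_c$ for every $c\in\R$. The principal obstacle is the localization in Step 2: one must estimate the fractional commutator between $|u_n(x)-u_n(y)|^{p-2}(u_n(x)-u_n(y))$ and the cutoff $\phi_\epsilon$, simultaneously for both components, and verify that the joint term $\int|u_n|^\al|v_n|^\be/|x|^m\dx$ concentrates with the coupled constant $S_{\al,\be}$ (rather than with $S_{\al+\be}$), which is precisely the content of the third inequality in Proposition \ref{CCP} and is what distinguishes the system from the scalar case treated in \cite{chen20183065}.
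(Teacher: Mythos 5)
Your Step 1 is essentially the paper's boundedness argument (same use of $c_0$, $S_{\al+\be}$, $S_{\al,\be}$ in the three cases), but for the convergence part you take a genuinely different and heavier route: the paper never invokes Proposition \ref{CCP} in this lemma. It shows instead that the weak limit $(u,v)$ is a critical point of $I$ (so $I(u,v)\ge0$), applies Lemma \ref{Brezis-Lieb} and Lemma \ref{convergence2} to get $o_n(1)=\|(u_n-u,v_n-v)\|_{\W}^p-\gamma\int_{\Omega}\frac{|u_n-u|^{\al}|v_n-v|^{\be}}{|x|^m}\dx$ together with $\left(\frac1p-\frac1{\psm}\right)\|(u_n-u,v_n-v)\|_{\W}^p\le c+o_n(1)<\tilde c$, and then the definition of $S_{\al,\be}$ alone yields $\kappa\|(u_n-u,v_n-v)\|_{\W}^p\le o_n(1)$. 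Your CCP-plus-cutoff localization can be made to work, but it imports all the commutator machinery that the paper only needs inside the proof of Proposition \ref{CCP} itself, and in case (iii) it is far more roundabout than the one-line estimate $o_n(1)\ge\|(u_n-u,v_n-v)\|_{\W}^p(1-\gamma S_{\al,\be}^{-1})$.

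Beyond the comparison, two steps as written do not go through. First, the displayed chain $c=\lim\big[I(u_n,v_n)-\frac1{\psm}I'(u_n,v_n)(u_n,v_n)\big]\ge\left(\frac1p-\frac1{\psm}\right)\gamma\nu_j$ is not justified: with the $\frac1{\psm}$-combination the critical term cancels identically, so $\gamma\nu_j$ could only enter through $\liminf\|(u_n,v_n)\|_{\W}^p\ge(\sigma_1)_j+(\sigma_2)_j$, and your localization gives $(\sigma_1)_j+(\sigma_2)_j\le\gamma\nu_j$, i.e.\ the wrong direction (indeed $S_{\al,\be}\nu_j^{p/\psm}<\gamma\nu_j$ once $\nu_j$ exceeds the threshold). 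The correct finish is either $(\sigma_1)_j+(\sigma_2)_j\ge S_{\al,\be}\nu_j^{p/\psm}\ge S_{\al,\be}^{\frac{d-m}{sp-m}}\gamma^{-\frac{p}{\psm-p}}$, or the $\frac1p$-combination, where the critical term survives as $\gamma\left(\frac1p-\frac1{\psm}\right)\int_\Omega\frac{|u_n|^{\al}|v_n|^{\be}}{|x|^m}\dx\to\gamma\left(\frac1p-\frac1{\psm}\right)\nu(\ov\Omega)\ge\gamma\left(\frac1p-\frac1{\psm}\right)\nu_j$ and the subcritical term carries the nonnegative coefficient $\eta\left(\frac1p-\frac1r\right)$, so no "absorption" is needed at all (your absorption remark also silently uses $I'(u,v)=0$, which Step 2 never establishes, and in case (i) with $r<\psm$ it is not addressed). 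Second, Step 3 has a real gap: atom-freeness of the mixed measure $\nu$ (equivalently $G(u_n,v_n)\to G(u,v)$) does not give $\int_\Omega\frac{|u_n|^{\al-2}u_n|v_n|^{\be}(u_n-u)}{|x|^m}\dx\to0$, which your monotonicity/$S_+$ argument needs; H\"older inevitably produces the factor $\big(\int_\Omega\frac{|u_n-u|^{\psm}}{|x|^m}\dx\big)^{1/\psm}$, i.e.\ you must also kill the atoms of $\nu_1,\nu_2$ (e.g.\ test with $(u_n\phi_\epsilon,0)$ and $(0,v_n\phi_\epsilon)$), or bypass monotonicity altogether: once $G(u_n,v_n)\to G(u,v)$, Lemma \ref{convergence2} gives $\int_\Omega\frac{|u_n-u|^{\al}|v_n-v|^{\be}}{|x|^m}\dx\to0$ and the identity $o_n(1)=\|(u_n-u,v_n-v)\|_{\W}^p-\gamma\int_\Omega\frac{|u_n-u|^{\al}|v_n-v|^{\be}}{|x|^m}\dx$ concludes, exactly as in the paper.
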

\begin{proof}
Let $\left\{\left(u_{n}, v_{n}\right)\right\}$ be a PS sequence for  $I$ at the level $c$, i.e.,
\begin{align}\label{p1}
	I\left(u_{n}, v_{n}\right)=\frac{1}{p}\left\|\left(u_{n}, v_{n}\right)\right\|_{\W}^{p}-\frac{\gamma}{p_{s}^{*}(m)} \int_{\Omega} \frac{\left|u_{n}\right|^{\alpha}\left|v_{n}\right|^{\beta}}{|x|^{m}} \dx -\frac{\eta}{r} \int_{\Omega}\left( | u_{n}|^{r}+|v_{n}|^{r}\right)   \dx =  c+o_{n}(1)
\end{align}
and
$I^{\prime}\left(u_{n}, v_{n}\right)(\phi, \psi) \rightarrow 0$ for all $\phi, \psi\in \wps$. In particular, taking $(\phi, \psi)=\left(u_{n}, v_{n}\right)$, 
\begin{equation}\label{p2}
	I^{\prime}\left(u_{n}, v_{n}\right)\left(u_{n}, v_{n}\right)=\|(u_{n}, v_{n})\|_{\W} ^{p}-\gamma \int_{\Omega} \frac{\left|u_{n}\right|^{\alpha}\left|v_{n}\right|^{\beta}}{|x|^{m}}  \dx-\eta\int_{\Omega}\left( |u_{n}|^{r}+|v_{n}|^{r}\right)   \dx.
\end{equation}
By \eqref{p1} and \eqref{p2}, we get
\begin{align}\label{p3}
\notag	& C+o_{n}(1)\left(\left[u_{n}\right]_{s, p}+\left[v_{n}\right]_{s, p}\right) \geq p 
    I\left(u_{n}, v_{n}\right)-I^{\prime}\left(u_{n}, v_{n}\right)\left(u_{n},  v_{n}\right) \\
	& =\gamma\left(1-\frac{p}{p_{s}^{*}(m)}\right) \int_{\Omega} \frac{\left|u_{n}\right|^{\alpha}\left|v_{n}\right|^{\beta}}{|x|^{m}}  \dx+\eta\left(1-\frac{p}{r}\right)\int_{\Omega}(\left|u_{n}\right|^{r}+\left|v_{n}\right|^{r})  \dx.
\end{align}
\noi \textbf{Boundedess of $\text{(PS)}_c$ sequence:} If $p_{s}^{*}(m)>p$ and $r>p$ then by \eqref{p3}, we obtain
\begin{align}\label{identities-1}
\begin{split}
&\int_{\Omega} \frac{\left|u_{n}\right|^{\alpha}\left|v_{n}\right|^{\beta}}{|x|^{m}}  \dx \leq C\left(1+\left[u_{n}\right]_{s, p}+\left[v_{n}\right]_{s, p}\right), \text{ and }  \\
& \int_{\Omega}\left(\left|u_{n}\right|^{r}+\left|v_{n}\right|^{r}\right)  \dx \leq C\left(1+\left[u_{n}\right]_{s, p}+\left[v_{n}\right]_{s, p}\right).
\end{split}
\end{align}
By using above estimates and \eqref{p1}, we have
$$
\frac{1}{p}\|(u_{n}, v_{n})\|_{\W} ^{p}\leq C\left(1+\left[u_{n}\right]_{s, p}+\left[v_{n}\right]_{s, p}\right),
$$
which implies that $\left\{\left(u_{n}, v_{n}\right)\right\}$ is bounded in $\W$.

\noindent If $r=p$ then by the assumption that $\max \{p_{s}^{*}(m), r\}>p$, we have $p_{s}^{*}(m)>p$. Now by using  \eqref{p3}, we have
$$
\int_{\Omega} \frac{\left|u_{n}\right|^{\alpha}\left|v_{n}\right|^{\beta}}{|x|^{m}}  \dx \leq C\left(1+\left[u_{n}\right]_{s, p}+\left[v_{n}\right]_{s, p}\right).
$$
By the H\"{o}lder's inequality and using the definition of $S_{\al+\be}$,
\begin{align*}
   \int_{\Omega}|u_n|^p \dx\leq \int_{\Omega}\frac{|u_n|^p}{|x|^{\frac{pm}{p_{s}^{*}(m)}}}|x|^{\frac{pm}{p_{s}^{*}(m)}} \dx & \leq \left(\int_{\Omega} \frac{\left|u_{n}\right|^{p_{s}^{*}(m)}}{|x|^{m}} \dx\right)^{\frac{p}{p_s^{*}(m)}}\left(\int_{\Omega}|x|^{\frac{pm}{p_{s}^{*}(m)-p}} \dx\right)^{\frac{p_{s}^{*}(m)-p}{p_{s}^{*}(m)}} \\
   &\leq c_0S_{\al+\be}^{-1} \left[u_{n}\right]_{s,p}^{p}. 
\end{align*}
Similarly, we also have
$
\int_{\Omega}|v_n|^p  \dx\leq c_0 S_{\al+\be}^{-1}\left[v_{n}\right]_{s,p}^{p}.
$
By using the above information and \eqref{p1}, we have
$$
	\left(\frac{1}{p}-\frac{c_{0} S_{\al+\be}^{-1}\eta}{p}\right)\left\|\left(u_{n}, v_{n}\right)\right\|_{\W}^{p}\leq  C\left(1+\left[u_{n}\right]_{s, p}+\left[v_{n}\right]_{s, p}\right)
$$
which implies the boundedness of  $\left\{\left(u_{n}, v_{n}\right)\right\}$  in $\W$ as $\eta < \frac{S_{\al+\be}}{c_{0}}$.

\noindent  If $p_{s}^{*}(m)=p$ (i.e., $m=sp$) then we have $r>p$. From the definition of $S_{\al, \be}$, 
\begin{align}\label{def-use-2}
    \int_{\Omega} \frac{\left|u_{n}\right|^{\alpha}\left|v_{n}\right|^{\beta}}{|x|^{m}} \dx \leq S_{\al,\be}^{-1}\|\left(u_{n}, v_{n}\right)\|_{\W}^{p}.
\end{align}
By \eqref{def-use-2}, \eqref{identities-1}, and \eqref{p1}, we get
\begin{align*}
    \left(\frac{1}{p}-\frac{\gamma S_{\al,\be}^{-1}}{p}\right)\left\|\left(u_{n}, v_{n}\right)\right\|_{\W}^{p} \leq C\left(1+\left[u_{n}\right]_{s, p}+\left[v_{n}\right]_{s, p}\right),
\end{align*}
which implies $\left\{\left(u_{n}, v_{n}\right)\right\}$ is bounded in $\W$ as $\gamma <S_{\al,\be}$.

\noindent \textbf{Convergence of $\text{(PS)}_c$ sequence:} Since $\mathcal{W}$ is reflexive, upto a subsequence, $\{ (u_n,v_n)\}$ weakly converges to $(u,v)$ in $\mathcal{W}$. Hence $u_n \ra u$ and $v_n \ra v$ a.e. in $\Omega$. By \cite[Lemma 2.5]{Bi-Ch2026}, we have the following convergence for every $(\phi, \psi) \in \W$:
\begin{align*}
    &\lim_{n \ra \infty} \int_{\Omega} \abs{u_n(x)}^{\al -2} u_n(x) \abs{v_n(x)}^{\beta} \phi(x) \frac{\dx}{\abs{x}^m} = \int_{\Omega} \abs{u(x)}^{\al -2} u(x) \abs{v(x)}^{\beta} \phi(x) \frac{\dx}{\abs{x}^m}, \\ & \lim_{n \ra \infty} \int_{\Omega} \abs{u_n(x)}^{\al} \abs{v_n(x)}^{\be -2} v_n(x) \psi(x) \frac{\dx}{\abs{x}^m} = \int_{\Omega} \abs{u(x)}^{\al} \abs{v(x)}^{\be -2} v(x) \psi(x) \frac{\dx}{\abs{x}^m},\\
    &\lim_{n \ra \infty} \iint\limits_{\rd\times\rd}\frac{|u_n(x)-u_n(y)|^{p-2}(u_n(x)-u_n(y))(\phi(x)-\phi(y))}{|x-y|^{d+sp}} \dxy  \\
    &\quad\quad=\iint\limits_{\rd\times\rd}\frac{|u(x)-u(y)|^{p-2}(u(x)-u(y))(\phi(x)-\phi(y))}{|x-y|^{d+sp}} \dxy, \text{ and } \\
    &\lim_{n \ra \infty} \iint\limits_{\rd\times\rd}\frac{|v_n(x)-v_n(y)|^{p-2}(v_n(x)-v_n(y))(\psi(x)-\psi(y))}{|x-y|^{d+sp}} \dxy  \\
    &\quad\quad=\iint\limits_{\rd\times\rd}\frac{|v(x)-v(y)|^{p-2}(v(x)-v(y))(\psi(x)-\psi(y))}{|x-y|^{d+sp}} \dxy.
    \end{align*}
Also, by the compact embeddings of $\W \hookrightarrow L^r(\Omega) \times L^r(\Omega)$, we get $u_n \ra u$ and $v_n \ra v$ in $L^r(\Omega)$, which further give
\begin{align*}
    \lim_{n \ra \infty} \int_{\Omega} \abs{u_n}^{r-2} u_n \phi = \int_{\Omega} \abs{u}^{r-2} u \phi, \text{ and } \lim_{n \ra \infty} \int_{\Omega} \abs{v_n}^{r-2} v_n \psi = \int_{\Omega} \abs{v}^{r-2} v \psi,
\end{align*}
for every $(\phi, \psi) \in \W$. Therefore, using $I'(u_n,v_n)(\phi, \psi) = o_n(1)$ and above convergences, we conclude $I'(u,v)(\phi, \psi) = 0$ for every $(\phi, \psi) \in \W$. In particular, we now choose $\phi=u, \psi=v$. Therefore,
$$ I(u, v)=\gamma\left(\frac{1}{p} - \frac{1}{p_{s}^{*}(m)}\right) \int_{\Omega} \frac{|u|^{\alpha}|v|^{\beta}}{|x|^{m}}  \dx+ \eta\left( \frac{1}{p}-\frac{1}{r}\right) \int_{\Omega}\left(|u|^{r}+|v|^{r}\right) \dx \ge 0.$$
Further, applying  the Brezis-Lieb lemma (Lemma \ref{Brezis-Lieb}) and Lemma \ref{convergence2}, we get
\begin{align*}
    \begin{split}
    o_{n}(1)&=I^{\prime}\left(u_{n}, v_{n}\right)\left(u_{n}, v_{n}\right)-I^{\prime}(u, v)(u, v) \\
	& =\left\|\left(u_{n}, v_{n}\right)\right\|_{\W}^{p}-\|(u, v)\|_{\W}^{p}-\gamma \int_{\Omega} \frac{\left|u_{n}\right|^{\alpha}\left|v_{n}\right|^{\beta} - \left|u\right|^{\alpha}\left|v\right|^{\beta}}{|x|^{m}} \dx - \eta \int_{\Omega} \left( \abs{u_n}^r + \abs{v_n}^r \right) \dx\\
    &\quad \quad +\eta \int_{\Omega} \left( \abs{u}^r + \abs{v}^r \right) \dx \\
    & = \left\|\left(u_{n}-u, v_{n}-v\right)\right\|_{\W}^{p}-\gamma \int_{\Omega} \frac{\left|u_{n}-u\right|^{\alpha}\left|v_{n}-v\right|^{\beta}}{|x|^{m}} \dx - \eta \int_{\Omega} \left( \abs{u_n-u}^r + \abs{v_n-v}^r \right) \dx.
    \end{split} 
\end{align*}
From the compact embeddings of $\W \hookrightarrow L^r(\Omega) \times L^r(\Omega)$, the above identities yield 
\begin{align}\label{p4}
    o_{n}(1) = \left\|\left(u_{n}-u, v_{n}-v\right)\right\|_{\W}^{p}-\gamma \int_{\Omega} \frac{\left|u_{n}-u\right|^{\alpha}\left|v_{n}-v\right|^{\beta}}{|x|^{m}} \dx.
\end{align}
Again using Lemma \ref{Brezis-Lieb}, Lemma \ref{convergence2}, and compact embeddings of $\W \hookrightarrow L^r(\Omega) \times L^r(\Omega)$, we get
\begin{equation}\label{p5}
I\left(u_{n}, v_{n}\right)-I(u, v)=\frac{1}{p}\left\|\left(u_{n}-u, v_{n}-v\right)\right\|_{\W}^{p}-\frac{\gamma}{p_{s}^{*}(m)} \int_{\Omega} \frac{\left|u_{n}-u\right|^{\alpha}\left|v_{n}-v\right|^{\beta}}{|x|^{m}}  \dx+o_{n}(1),
\end{equation}
where using \eqref{p4} we write
\begin{align}\label{p6}
\begin{split}
&\frac{1}{p}\left\|\left(u_{n}-u, v_{n}-v\right)\right\|_{\W}^{p}-\frac{\gamma}{p_{s}^{*}(m)} \int_{\Omega} \frac{\left|u_{n}-u\right|^{\alpha}\left|v_{n}-v\right|^{\beta}}{|x|^{m}}  \dx \\
&=\left(\frac{1}{p}-\frac{1}{p_{s}^{*}(m)}\right)\left\|\left(u_{n}-u, v_{n}-v\right)\right\|_{\W}^{p}+o_{n}(1).
\end{split}
\end{align}
Further, using \eqref{p5},
\begin{align}\label{p7}
	\frac{1}{p}\left\|\left(u_{n}-u, v_{n}-v\right)\right\|_{\W}^{p}  -\frac{\gamma}{p_{s}^{*}(m)} \int_{\Omega} \frac{\left|u_{n}-u\right|^{\alpha}\left|v_{n}-v\right|^{\beta}}{|x|^{m}} \dx \leq I\left(u_{n}, v_{n}\right) \leq c+o_{n}(1).
\end{align}
If $p_{s}^{*}(m)>p$ and $c<\tilde{c}$, then \eqref{p6} and \eqref{p7} yield
\begin{equation}\label{p8}
	\left(\frac{1}{p}-\frac{1}{p_{s}^{*}(m)}\right)\left\|\left(u_{n}-u, v_{n}-v\right)\right\|_{\W}^{p}< \left(\frac{1}{p}-\frac{1}{p_{s}^{*}(m)}\right) \frac{S_{\al,\be}^{\frac{d - m}{sp-m}}}{\gamma^{\frac{p}{p_{s}^{*}(m)-p}}}.
\end{equation}
By \eqref{p4}, using the definition of $S_{\al,\be}$ and then using \eqref{p8}, we obtain
$$
\begin{aligned}
	o_{n}(1)& \ge \left\|\left(u_{n}-u, v_{n}-v\right)\right\|_{\W}^{p}-\gamma S_{\al,\be}^{\frac{-p_{s}^{*}(m)}{p}}\left\|\left(u_{n}-u, v_{n}-v\right)\right\|_{\W}^{p_{s}^{*}(m)} \\
	& =\left\|\left(u_{n}-u, v_{n}-v\right)\right\|_{\W}^{p}\left(1-\gamma S_{\al,\be}^{\frac{-p_{s}^{*}(m)}{p}}\left\|\left(u_{n}-u, v_{n}-v\right)\right\|_{\W}^{p_{s}^{*}(m)-p}\right) \\
    &\ge \kappa\left\|\left(u_{n}-u, v_{n}-v\right)\right\|_{\W}^{p},
\end{aligned}
$$
for some $\kappa>0$ which implies $u_{n} \rightarrow u$ and $v_{n} \rightarrow v$ in $\wps$.

\noi If $p_{s}^{*}(m)=p$ i.e., $m=sp$, by \eqref{p4} and using the definition of $S_{\al,\be}$ we have
$$
\begin{aligned}
		o_{n}(1) =\left\|\left(u_{n}-u, v_{n}-v\right)\right\|_{\W}^{p}-\gamma \int_{\Omega}\frac{\left|u_{n}-u\right|^{\alpha}\left|v_{n}-v\right|^{\beta}}{|x|^{sp}} \dx \geq\left\|\left(u_{n}-u, v_{n}-v\right)\right\|_{\W}^{p}\left(1-\gamma S_{\al, \be}^{-1} \right).
\end{aligned}
$$
As $S_{\al,\be}>\gamma$, we immediately get $u_{n} \rightarrow u$ and $v_{n} \rightarrow v$ in $\wps$. This completes the proof. 
\end{proof}
\begin{remark}\label{radial-function}
    Assume $m<sp$, i.e. $p_s^*(m)>p$. By \cite[Theorem 1.1]{SaSu}, we choose a positive radially symmetric decreasing minimizer $U_m=U_m(r)$ for $S_{\al+\be}$ which weakly solves
    \begin{equation}\label{bubble_eqn}
        (-\Delta_p)^sU_m=\frac{U_m^{p_s^*(m)-1}}{|x|^m}\text{ in }\R^d.
    \end{equation}
    By \cite[Lemma 2.9]{chen20183065}, there exist $l>1$ such that $$U_m(l r)\leq \frac12 U_m(r),\; \forall \, r\geq1.$$ For $\var>0$, the function $U_{m,\var}(x)=\var^{-\frac{N-ps}{p}}U_m\left(\frac{x}{\var}\right)$ is also a minimizer of $S_{\al+\be}$ satisfying \eqref{bubble_eqn}. Now, for $\delta > 0$, we define the radially symmetric non-negative decreasing function 
    \begin{equation}\label{trunc_bubble}
        u_{m,\var,\delta}(r):=\begin{cases}
            U_{m,\var}(r),&r\leq\delta;\\
            \displaystyle \frac{U_{m,\var}(\delta)(U_{m,\var}(r)-U_{m,\var}(l\delta))}{U_{m,\var}(\delta)-U_{m,\var}(l\delta)},&\delta\leq r\leq l\delta;\\
            0,&r\geq l\delta.
        \end{cases}
    \end{equation}
    Observe that $u_{m,\var,\delta}\in W_0^{s,p}(\Om)$ for every $\delta<l^{-1}\text{dist}(0,\pa\Om)$.
\end{remark}
Next, we define the following min-max level associated with the energy functional:
$$c_{1}=\inf_{\gamma\in \varGamma}\max_{t\in [0,1]}I(\gamma(t)), \text{ where } \varGamma=\{\gamma\in \mathcal{C}([0,1],\W):\gamma(0)=(0,0),\,I(\gamma(1))<0\}.$$ 

\begin{lemma}[Mountain pass level]\label{ps2} Let $0 \leq m <sp$ and $\al, \be > 1$ be such that $\al+\be = \psm$. Then $c_1 < \tilde{c}$ holds for any $r$ satisfying
\begin{align}\label{mp1_1}
\begin{cases}
r>p^{*}_s-p^{\prime}, & \text { if } d<p^{2} s;\\
r \geq p, & \text { if } d \geq p^2s.
\end{cases}
\end{align}
\end{lemma}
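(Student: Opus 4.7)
\textbf{Proof plan for Lemma \ref{ps2}.} The plan is to construct an explicit admissible path in $\varGamma$ using a near-optimizer of $S_{\al,\be}$, built from the truncated bubble $w_\var := u_{m,\var,\delta}$ of Remark \ref{radial-function}. By Proposition \ref{relation1}, for constants $B,C>0$ with $B/C = (\al/\be)^{1/p}$, the pair $(Bw_\var, Cw_\var)$ realizes the optimal ratio in the Rayleigh quotient defining $S_{\al,\be}$, making it the natural test pair.

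First I would fix $T_0$ large enough so that $I(T_0 B w_\var, T_0 C w_\var) < 0$ (possible since the critical term dominates as $t\to\infty$ because $\psm > p$), and consider the path $\gamma_\var(t) := (t T_0 B w_\var, t T_0 C w_\var)$ for $t\in[0,1]$. This path is in $\varGamma$, so
\begin{align*}
c_1 \le \max_{t \in [0,1]} I(\gamma_\var(t)) \le \sup_{t\ge 0} f_\var(t),
\end{align*}
where
\begin{align*}
f_\var(t) := \tfrac{t^p}{p}(B^p+C^p)[w_\var]_{s,p}^p - \tfrac{\gamma\, t^{\psm}}{\psm}\, B^\al C^\be \int_\Om \tfrac{w_\var^{\psm}}{|x|^m}\dx - \tfrac{\eta\, t^r (B^r+C^r)}{r}\int_\Om w_\var^r \dx.
\end{align*}

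Next I would analyze $\sup_{t\ge 0} f_\var(t)$. Dropping the negative $\eta$-term produces an upper bound of the form $\bigl(\tfrac{1}{p}-\tfrac{1}{\psm}\bigr)\Lambda_\var^{\psm/(\psm-p)}$, where $\Lambda_\var$ is a ratio of the two leading coefficients. Using Proposition \ref{relation1} to identify $S_{\al,\be}$ with $S_{\al+\be}$ via the factor in $B,C$, and then inserting the standard asymptotic estimates for $w_\var$ as $\var \to 0$ (namely
$[w_\var]_{s,p}^p = S_{\al+\be}^{(d-m)/(sp-m)} + O\bigl(\var^{(d-sp)/(p-1)}\bigr)$ and $\int_\Om w_\var^{\psm}|x|^{-m}\dx = S_{\al+\be}^{(d-m)/(sp-m)} + O\bigl(\var^{(d-m)/(p-1)}\bigr)$, as recorded in \cite{chen20183065,SaSu}) yields $\Lambda_\var^{\psm/(\psm-p)} \le \tilde c + O\bigl(\var^{(d-sp)/(p-1)}\bigr)$. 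The problem therefore reduces to showing that the term $-\tfrac{\eta t_\var^r(B^r+C^r)}{r}\int_\Om w_\var^r \dx$, evaluated at the maximizer $t_\var$ (which stays bounded away from $0$ and $\infty$ as $\var \to 0$), strictly beats this error.

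The main obstacle, and the precise source of the hypothesis \eqref{mp1_1}, is the delicate comparison between the error $O(\var^{(d-sp)/(p-1)})$ above and the lower bound for the subcritical mass $\int_\Om w_\var^r \dx$. Using the asymptotics of $U_m$ one has, for small $\var$,
\begin{align*}
\int_\Om w_\var^r \dx \;\gtrsim\;
\begin{cases}
\var^{d - \frac{(d-sp)r}{p}}, & r > \tfrac{d(p-1)}{d-sp},\\[2pt]
\var^{d - \frac{(d-sp)r}{p}}|\log\var|, & r = \tfrac{d(p-1)}{d-sp},\\[2pt]
\var^{\frac{(d-sp)r}{p(p-1)}}, & r < \tfrac{d(p-1)}{d-sp}.
\end{cases}
\end{align*}
An elementary check shows that, when $d \ge p^2 s$, the exponent in the lower bound is strictly less than $\tfrac{d-sp}{p-1}$ as soon as $r\ge p$, so the $\eta$-gain dominates the error. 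When $d < p^2 s$ this dominance fails for $r$ small, and a direct computation shows the relevant threshold is exactly $r = p^*_s - p'$; the strict inequality $r > p^*_s - p'$ is precisely what forces the correct sign. Combining these estimates gives $\sup_{t\ge 0} f_\var(t) < \tilde c$ for all sufficiently small $\var$, which completes the proof of $c_1 < \tilde c$.
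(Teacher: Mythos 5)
Your proposal is essentially the paper's own argument: the paper also tests $I$ along the ray through $(\al^{1/p}u_{m,\var,1},\be^{1/p}u_{m,\var,1})$ (the optimally coupled pair from Proposition \ref{relation1}), uses the asymptotics of \cite[Lemmas 2.10 and 3.1]{chen20183065} for $[u_{m,\var,1}]_{s,p}^p$, the critical Hardy term and $\int_\Om u_{m,\var,1}^r\dx$, and concludes by comparing the gain $h_r(\var)$ with the errors $O(\var^{(d-sp)/(p-1)})+O(\var^{(d-m)/(p-1)})$, exactly as you outline. The only point to phrase carefully is the borderline case $d=p^2s$, $r=p$ (equivalently $r=p_s^*/p'$), where the exponents coincide and it is the logarithmic factor in your displayed lower bound, not a strict exponent inequality, that makes the subcritical term dominate.
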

\begin{proof}
First, observe that \eqref{mp1_1} implies 
$r \ge \frac{p^*_s}{p'}$ for every $d,p,s$. So it is enough to prove $c_1 < \tilde{c}$ whenever $r \ge \frac{p^*_s}{p'}$. For simplicity, we assume $\delta=1$ but the same proof works for any $\delta>0$. For sufficiently small $\varepsilon \in (0,1)$, we consider the radially symmetric non-negative decreasing function $u_{m, \varepsilon,1}$ given in \eqref{trunc_bubble}. In view of Proposition \ref{relation1}, we consider the following two functions in $\W$:
\begin{align}\label{def}
 u_{0}=\alpha^{\frac{1}{p}} u_{m, \varepsilon, 1}, \text{ and } v_{0}=\beta^{\frac{1}{p}} u_{m, \varepsilon, 1}.   
\end{align}
Using the norm estimates of $u_{m, \varepsilon, 1}$ in \cite[Lemma 2.10]{chen20183065}, we obtain
$$
\begin{aligned}
	 I\left(t u_{0}, t v_{0}\right):=g_{\varepsilon}(t)&=\frac{1}{p} \left(\left[t u_{0}\right]_{s, p}^{p}+\left[t v_{0}\right]_{s, p}^{p} \right) - \frac{\gamma}{p_{s}^{*}(m)} \int_{\Omega} \frac{\left(t u_{0}\right)^{\alpha}\left(t v_{0}\right)^{\beta}}{|x|^{m}}\dx\\
    &\quad \quad-\frac{\eta}{r}\int_{\Omega}\left(\left(t u_{0}\right)^{r}+\left(t v_{0}\right)^{r}\right)\dx \\
	& =\frac{t^{p}}{p}(\alpha+\beta)\left[u_{m,\ep,1}\right]_{s, p}^{p}-\gamma\frac{ t^{p_{s}^{*}(m)}}{p_{s}^{*}(m)}\left(\alpha^{\frac{\alpha}{p}}\beta^{\frac{\beta}{p}}\right) \int_{\Omega}\frac{u_{m, \varepsilon,1}^{p_{s}^{*}(m)}}{|x|^m}\dx\\
    &\quad \quad-\frac{\eta t^{r}}{r}\left(\alpha^{\frac{r}{p}}+\beta^{\frac{r}{p}}\right)\int_{\Omega} u_{m, \varepsilon,1}^{r}\dx\\
	& \leq \frac{t^{p}}{p}(\alpha+\beta)\left(S_{\al+\be}^{\frac{d-m}{sp-m}}+ C \varepsilon^{\frac{d-p s}{p-1}}\right) - \gamma \frac{t^{p_{s}^{*}(m)}}{p_{s}^{*}(m)}\left(\alpha^{\frac{\alpha}{p}} \beta^{\frac{\beta}{p}}\right)\left(S_{\al+\be}^{\frac{d-m}{p s-m}}-C\varepsilon^{\frac{d-m}{p-1}}\right) \\
	&\quad \quad-\frac{\eta t^{r}}{r}\left(\alpha^{\frac{r}{p}}+\beta^{\frac{r}{p}}\right) h_{r}(\varepsilon),
\end{aligned}$$
where $C>0$ is a constant and
\begin{align*}
h_{r}(\varepsilon) = \begin{cases} \varepsilon^{\frac{d}{p}}|\log \varepsilon|, & \text { if } r= \frac{p^{*}_s}{p'};\\ 
\varepsilon^{d-\frac{d-sp}{p}r}, & \text { if } r>\frac{p^{*}_s}{p'}.\end{cases}
\end{align*}
Observe that, $g_{\ep}(0)= 0$, $g_{\ep}(t)>0$ for $t>0$ small enough. Further, there exists $C>0$ (large enough) such that
\begin{align}\label{g_ep_1}
    g_{\varepsilon}(t) \leq \left(\frac{t^{p}}{p}(\alpha+\beta) -\gamma\frac{ t^{p_{s}^{*}(m)}}{p_{s}^{*}(m)}\left(\alpha^{\frac{\alpha}{p}}\beta^{\frac{\beta}{p}}\right)\right)S_{\al+\be}^{\frac{d-m}{p s-m}} + C t^p\varepsilon^{\frac{d-p s}{p-1}}+ Ct^{p_{s}^{*}(m)} \varepsilon^{\frac{d-m}{p-1}}-\frac{t^{r}}{C} h_{r}(\varepsilon).
\end{align}
Hence, there exists $\varepsilon_{0}>0$ (small enough) such that for every $\ep \in [0, \ep_0]$, $g_{\varepsilon}(t) \rightarrow-\infty$ as $t \rightarrow \infty$. 
Therefore, by the continuity of $g_{\ep}$, there exist $m_1>m_2>0$ such that
$$\sup\limits_{t \geq 0} g_{\varepsilon}(t)=\sup\limits_{t \in[m_1,m_2]} g_{\varepsilon}(t) \quad \forall \varepsilon \in\left[0, \varepsilon_{0}\right].$$
Consider the function
$$ h(t)=\frac{t^{p}}{p}(\alpha+\beta)-\gamma \frac{t^{p_{s}^{*}(m)}}{p_{s}^{*}(m)} \alpha^{\frac{\alpha}{p}}\beta^{\frac{\beta}{p}}, \; t>0.$$
Note that $h$ attains its maximum at
$$t^{*}=\frac{1}{\gamma^{\frac{1}{p_{s}^{*}(m)-p}}}\left(\frac{\alpha+\beta}{\alpha^{\frac{\alpha}{p}}\beta^{\frac{\beta}{p}}}\right)^{\frac{1}{p_{s}^{*}(m)-p}},$$
which concludes that 
$$
\begin{aligned}
\sup\limits_{t \geq 0} h(t)&=\frac{(t^{*})^{p}}{p}(\alpha+\beta)-\gamma\frac{ (t^{*})^{p_{s}^{*}(m)}}{p_{s}^{*}(m)} \alpha^{\frac{\alpha}{p}}\beta^{\frac{\beta}{p}} \\
	& =\frac{\gamma^{\frac{p}{p-p_{s}^{*}(m)}}}{p}\left(\frac{\alpha+\beta}{\alpha^{\frac{\alpha}{p}}\beta^{\frac{\beta}{p}}}\right)^{\frac{p}{p_{s}^{*}(m)-p}}(\alpha+\beta)-\frac{\gamma}{p_{s}^{*}(m)}\gamma^{\frac{p_{s}^{*}(m)}{p-p_{s}^{*}(m)}}\left(\frac{\alpha+\beta}{\alpha^{\frac{\alpha}{p}}\beta^{\frac{\beta}{b}}}\right)^{\frac{p_{s}^{*}(m)}{p_{s}^{*}(m)-p}}\alpha^{\frac{\alpha}{p}}\beta^{\frac{\beta}{b}}\\
	& =\gamma^{\frac{p}{p-p_{s}^{*}(m)}} \frac{(\alpha+\beta)^{\frac{p_{s}^{*}(m)}{p_{s}^{*}(m)-p}}}{\alpha^{\frac{\alpha}{p_{s}^{*}(m)-p}} \beta^{\frac{\beta}{p_{s}^{*}(m)-p}}}\left(\frac{1}{p}-\frac{1}{p_{s}^{*}(m)}\right) \\
	& =\gamma^{\frac{p}{p-p_{s}^{*}(m)}}\left(\left(\frac{\alpha}{\beta}\right)^{\frac{\beta}{\alpha+\beta}}+\left(\frac{\beta}{\alpha}\right)^{\frac{\alpha}{\alpha+\beta}}\right)^{\frac{d-m}{p s-m}} \left(\frac{1}{p}-\frac{1}{p_{s}^{*}(m)}\right).
\end{aligned}
$$
Therefore, using \eqref{g_ep_1} and the relation between $S_{\al,\be}$ and $S_{\al+\be}$ (Proposition \ref{relation1}), we obtain
$$
\begin{aligned}
	&\sup_{t \in[m_1,m_2]} g_{\varepsilon}(t) \leq S_{\al+\be}^{\frac{d-m}{ps-m}}\sup_{t\geq 0} h(t) +\sup_{t \in[m_1,m_2]}\left(Ct^{p} \varepsilon^{\frac{d-ps}{p-1}}+C t^{p_{s}^{*}(m)} \varepsilon^{\frac{d-m}{p-1}}-\frac{1}{C} t^{r} h_{r}(\varepsilon)\right) \\
	& \leq \gamma^{\frac{p}{p-p_{s}^{*}(m)}}\left(\left(\frac{\alpha}{\beta}\right)^{\frac{\beta}{\alpha+\beta}}+\left(\frac{\beta}{\alpha}\right)^{\frac{\alpha}{\alpha+\beta}}\right)^{\frac{d-m}{p s-m}} \left(\frac{1}{p}-\frac{1}{p_{s}^{*}(m)}\right)S_{\al+\be}^{\frac{d-m}{p s-m}} +C \varepsilon^{\frac{d-p s}{p-1}}+C \varepsilon^{\frac{d-m}{p-1}}-\frac{1}{C} h_{r}(\varepsilon) \\
	& =\tilde{c}+C \varepsilon^{\frac{d-p s}{p-1}}+C \varepsilon^{\frac{d-m}{p-1}}-\frac{1}{C} h_{r}(\varepsilon).
\end{aligned}
$$
Next, using \cite[Lemma 3.1]{chen20183065}, we have 
$$C\varepsilon^{\frac{d-p s}{p-1}}+C \varepsilon^{\frac{d-m}{p-1}}-\frac{1}{C} h_{r}(\varepsilon)<0,$$
for sufficiently small $\varepsilon$, so that
\begin{align}\label{en-1}
    \sup_{t\geq 0}I\left(t u_{0}, t v_{0}\right)=\sup_{t\geq 0}g_{\varepsilon}(t) =\sup_{t \in[m_1,m_2]} g_{\varepsilon}(t) <\tilde{c}.
\end{align}
Finally, consider the path $\gamma_0 \in \mathcal{C}([0,1],\W)$ defined as $\gamma_0(t)= (tu_0,tv_0)$. Therefore, by \eqref{en-1}, we conclude $c_1 < \tilde{c}$.
\end{proof}

\begin{lemma} [Mountain pass geometry]\label{ps3}
Let $0 \leq m \leq sp, p\leq r <p_{s}^{*}$, $\max \left\{r, p_{s}^{*}(m)\right\}>p$ and 
\begin{equation*}
    \begin{cases}
        0<\eta<\lambda_1,&\text{ if }r=p;\\
        \eta>0,&\text{ if }r \in \left(p, p^{*}_s\right).
    \end{cases}
\end{equation*}
Then the following are true:

\begin{enumerate}
\item[{\rm{(a)}}] There exist $\rho,\Theta>0$ such that
$I(u, v)>\Theta $  on the sphere $ S= \left\{ (u,v) \in \mathcal{W} : \norm{(u,v)}_{\W} = \rho\right\}$.
\item [{\rm{(b)}}] There exists $\left(u_{1}, v_{1}\right) \in \W$  with $\norm{(u_{1}, v_{1})}_{\W} > \rho$ such that $I\left(u_{1}, v_{1}\right)<0.$ 
\end{enumerate}
\end{lemma}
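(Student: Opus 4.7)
The plan is to prove part \textbf{(a)} by producing a lower bound for $I$ valid on a small sphere, using the Hardy--Sobolev inequality \eqref{HS3} for the coupling term together with either the eigenvalue estimate \eqref{lm_1} or the Sobolev embedding for the $L^r$-term, and to prove part \textbf{(b)} by exhibiting a single ray along which $I$ tends to $-\infty$.

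For \textbf{(a)}, the estimate \eqref{HS3} gives
\begin{equation*}
\frac{\gamma}{\psm}\int_{\Omega}\frac{|u|^{\alpha}|v|^{\beta}}{|x|^{m}}\dx \;\le\; C\gamma\,\|(u,v)\|_{\mathcal{W}}^{\psm}.
\end{equation*}
For the concave part I split into cases. If $r=p$, then by hypothesis $\psm>p$, and \eqref{lm_1} yields $\int_{\Omega}(|u|^{p}+|v|^{p})\dx\le \lambda_{1}^{-1}\|(u,v)\|_{\mathcal{W}}^{p}$, so
\begin{equation*}
I(u,v)\;\ge\;\frac{1}{p}\Bigl(1-\tfrac{\eta}{\lambda_{1}}\Bigr)\|(u,v)\|_{\mathcal{W}}^{p}\;-\;C\gamma\,\|(u,v)\|_{\mathcal{W}}^{\psm},
\end{equation*}
where the prefactor is positive since $\eta<\lambda_{1}$. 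If $r\in(p,p_{s}^{*})$, the compact embedding $W_{0}^{s,p}(\Omega)\hookrightarrow L^{r}(\Omega)$ gives $\int_{\Omega}(|u|^{r}+|v|^{r})\dx\le C\|(u,v)\|_{\mathcal{W}}^{r}$, and I obtain
\begin{equation*}
I(u,v)\;\ge\;\tfrac{1}{p}\|(u,v)\|_{\mathcal{W}}^{p}\;-\;C\gamma\,\|(u,v)\|_{\mathcal{W}}^{\psm}\;-\;C\eta\,\|(u,v)\|_{\mathcal{W}}^{r}.
\end{equation*}
Because $\max\{r,\psm\}>p$, each negative exponent dominates (or is absorbed using $\eta<\lambda_{1}$ in the $r=p$ case), so for a sufficiently small $\rho>0$ the right-hand side is bounded below by some $\Theta>0$ uniformly on the sphere $\|(u,v)\|_{\mathcal{W}}=\rho$.

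For \textbf{(b)}, I pick any fixed $(u_{0},v_{0})\in\mathcal{W}$ with $u_{0},v_{0}$ non-negative and nontrivial (for instance, the test pair \eqref{def} constructed from the truncated bubble of Remark \ref{radial-function}, which guarantees that all three integrals below are strictly positive). Along the ray $t\mapsto(tu_{0},tv_{0})$,
\begin{equation*}
I(tu_{0},tv_{0}) \;=\; \frac{t^{p}}{p}\|(u_{0},v_{0})\|_{\mathcal{W}}^{p}\;-\;\frac{\gamma\,t^{\psm}}{\psm}\int_{\Omega}\frac{u_{0}^{\alpha}v_{0}^{\beta}}{|x|^{m}}\dx\;-\;\frac{\eta\,t^{r}}{r}\int_{\Omega}(u_{0}^{r}+v_{0}^{r})\dx.
\end{equation*}
Since $\max\{r,\psm\}>p$, one of the two negative terms has exponent strictly larger than $p$, so $I(tu_{0},tv_{0})\to-\infty$ as $t\to\infty$. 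Choosing $t_{*}$ sufficiently large that $\|(t_{*}u_{0},t_{*}v_{0})\|_{\mathcal{W}}>\rho$ and $I(t_{*}u_{0},t_{*}v_{0})<0$ yields the desired pair $(u_{1},v_{1})$.

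The only subtle point is the borderline case $m=sp$ (so $\psm=p$) coupled with $r>p$: there the Hardy--Sobolev term competes on equal footing with the norm term, and to keep the leading coefficient positive in (a) one must invoke the definition of $S_{\alpha,\beta}$ together with the standing restriction $\gamma<S_{\alpha,\beta}$ from Theorem \ref{existence-main}, replacing $\tfrac{1}{p}$ by $\tfrac{1}{p}(1-\gamma/S_{\alpha,\beta})$. Everything else reduces to elementary manipulation of the inequalities gathered in Section \ref{exst_unq_grnd}.
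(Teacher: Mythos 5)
Your proof is correct and follows essentially the same route as the paper: the small-sphere lower bound obtained from \eqref{HS2}--\eqref{HS3} together with \eqref{lm_1} (splitting the cases $r=p$ and $r\in(p,p_s^*)$), and for (b) the ray argument $I(tu_0,tv_0)\to-\infty$ as $t\to\infty$ using $\max\{r,p_s^*(m)\}>p$. Your two extra touches — invoking $\gamma<S_{\al,\be}$ in the borderline case $m=sp$, $r>p$, and picking in (b) a pair whose coupling integral is strictly positive — are in fact slightly more careful than the paper's own write-up, which relies tacitly on the same standing assumptions from Theorem \ref{existence-main}.
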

\begin{proof}
\noi (a) Using \eqref{HS2} and \eqref{lm_1} notice that 
\begin{equation*}
    I(u, v) \geq \begin{cases}
       \displaystyle \left(\frac{1}{p}-\frac{\eta}{p\lambda_{1}}-C\|(u, v)\|_{\W}^{p_{s}^{*}(m)-p}\right)\|(u, v)\|_{\W}^{p},&\text{ if }r=p; \\
        \left(\frac{1}{p}-C\|(u, v)\|_{\W}^{p_{s}^{*}(m)-p}-C\|(u, v)\|_{\W}^{r-p}\right)\|(u, v)\|_{\W}^{p},&\text{ if }r \in \left(p, p^{*}_s\right).
    \end{cases}
\end{equation*}
Therefore, there exist $\rho,\Theta>0$ such that
$I(u, v)>\Theta$, if $(u,v) \in S$.

\noi (b) For $\left(u, v\right) \in \W\setminus\{(0,0)\}$,
$$
\begin{aligned}
& I\left(t u, t v\right)=\frac{t^{p}}{p}\left(\left[u\right]_{s, p}^{p}+\left[v\right]_{s, p}^{p}\right)- \frac{\gamma t^{p_{s}^{*}(m)}}{p_{s}^{*}(m)} \int_{\Omega} \frac{\left|u\right|^{\alpha}\left|v\right|^{\beta}}{|x|^{m}}\dx-\frac{\eta t^{r}}{r} \int_{\Omega}\left(\left|u\right|^{r}+\left|v\right|^{r}\right)\dx.
\end{aligned}
$$
By using the fact that $\max \left\{p_{s}^{*}(m), r\right\}>p$ we have $I\left(t u, tv\right) \rightarrow-\infty$ as $t \rightarrow \infty$, which proves the existence of $(u_1,v_1)$. 
\end{proof}
Now, we are ready to prove the existence of the least energy solution. A weak solution $(\tilde{u},\tilde{v})\in \W$ of \eqref{system_eqn} has the least energy if
$$l_{S}:=\displaystyle\inf_{(u,v)\in S}I(u,v)=I(\tilde{u},\tilde{v}),$$ where $S$ is the set of all critical points of the functional $I$. We define the Nehari manifold
\begin{align}\label{nehari}
 \mathcal{N}=\left\{(u,v)\in \W\setminus\{(0,0)\}: I^{\prime}(u,v)(u,v)=0\right\}.    \end{align}
Also, denote $$l_{\mathcal{N}}:=\displaystyle\inf_{(u,v)\in \mathcal{N}}I(u,v).$$

\begin{proposition}\label{subexistence&uniqueness}
    Let $0 \leq m \le sp$, $p \le r < p^*_s$, and $\max \left\{r, p_{s}^{*}(m)\right\}>p$. Let $\al,\be>1$ be such that $\al + \be = p^*_s(m)$.  Choose $\eta, \gamma>0$ satisfying
\begin{align*}
    &\eta < \min\left\{\lambda_1, \frac{S_{\al+\be}}{c_{0}}\right\}, \text{ when } m<sp, r=p, \text{ and } \gamma < S_{\al, \be}, \text{ when } m=sp, r>p.
\end{align*}
Then \eqref{system_eqn} admits a nontrivial solution $(\tilde{u}, \tilde{v})$ with the least energy provided
\begin{align*}
\begin{cases}r \geq p, & \text { if } d \geq p^{2} s; \\ r>p^{*}_s-p^{\prime}, & \text { if } d<p^{2} s.\end{cases}
\end{align*}
\end{proposition}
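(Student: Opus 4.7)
The plan is to combine the Mountain Pass Theorem with a fibering analysis over the Nehari manifold $\mathcal{N}$ defined in \eqref{nehari}. First, Lemma~\ref{ps3} provides the mountain pass geometry for $I$, Lemma~\ref{ps2} supplies the strict estimate $c_1<\tilde c$ for the min--max level, and Lemma~\ref{ps1} yields the $(\mathrm{PS})_{c_1}$ condition in each of the admissible subcases determined by the hypotheses on $\eta,\gamma,r,m$. The Mountain Pass Theorem therefore produces a pair $(\tilde u,\tilde v)\in\mathcal{W}$ with $I'(\tilde u,\tilde v)=0$ and $I(\tilde u,\tilde v)=c_1\geq\Theta>0$; in particular $(\tilde u,\tilde v)\neq(0,0)$, so it is a nontrivial weak solution of \eqref{system_eqn}.

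To upgrade $(\tilde u,\tilde v)$ to a least-energy solution, I will identify $c_1$ with the Nehari infimum $l_{\mathcal N}$. For each $(u,v)\in\mathcal{W}\setminus\{(0,0)\}$ consider the fibering map $\phi_{u,v}(t):=I(tu,tv)$, which satisfies
\[
\phi_{u,v}'(t)=t^{p-1}\Big[\|(u,v)\|_{\mathcal{W}}^p-\eta t^{r-p}\big(\|u\|_{L^r(\Om)}^r+\|v\|_{L^r(\Om)}^r\big)-\gamma t^{p_s^*(m)-p}G(u,v)\Big].
\]
Using $\eta<\lambda_1$ when $r=p$ (so that the bracket at $t=0^+$ is at least $(1-\eta/\lambda_1)\|(u,v)\|_{\mathcal W}^p>0$) and $\gamma<S_{\al,\be}$ when $m=sp$ (so that the constant piece $\|(u,v)\|_{\mathcal W}^p-\gamma G(u,v)$ is strictly positive), together with $\max\{r,p_s^*(m)\}>p$, the bracket is strictly positive at $t=0^+$, strictly decreasing in $t$, and tends to $-\infty$. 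Consequently $\phi_{u,v}$ possesses a unique critical point $t^*(u,v)>0$, which is its global maximum, and $(t^*u,t^*v)$ is the unique element of the open ray through $(u,v)$ that lies on $\mathcal{N}$. In particular, $I=\max_{t\geq 0}\phi_{u,v}(t)>0$ on $\mathcal{N}$.

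The identification then proceeds in two steps. To show $c_1\le l_{\mathcal N}$, fix $(u,v)\in\mathcal N$ and choose $T\geq 1$ so large that $I(Tu,Tv)<0$ (possible because $\phi_{u,v}(\tau)\to-\infty$ as $\tau\to\infty$); then the admissible path $\sigma(t):=(tTu,tTv)\in\varGamma$ satisfies
\[
\max_{t\in[0,1]}I(\sigma(t))=\max_{s\in[0,T]}\phi_{u,v}(s)=\phi_{u,v}(1)=I(u,v),
\]
because $t^*(u,v)=1\in[0,T]$ by $(u,v)\in\mathcal N$. Hence $c_1\le I(u,v)$ and, taking the infimum, $c_1\le l_{\mathcal N}$. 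Conversely, $(\tilde u,\tilde v)$ being a nonzero critical point of $I$ forces $(\tilde u,\tilde v)\in\mathcal N$, so $l_{\mathcal N}\le I(\tilde u,\tilde v)=c_1$. Combining, $c_1=l_{\mathcal N}=I(\tilde u,\tilde v)$. Any other nontrivial weak solution $(u,v)$ belongs to $\mathcal N$, so $I(u,v)\geq l_{\mathcal N}=I(\tilde u,\tilde v)$, and $(\tilde u,\tilde v)$ is a least-energy solution.

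The main technical obstacle lies in guaranteeing the uniqueness of the fibering critical point $t^*(u,v)$ for \emph{every} admissible $(u,v)$; precisely this is what forces the smallness conditions $\eta<\lambda_1$ (in the case $r=p$) and $\gamma<S_{\al,\be}$ (in the case $m=sp$) to enter the hypothesis, since they keep the bracket in $\phi_{u,v}'$ strictly positive at $t=0^+$ and thus ensure it crosses zero exactly once. Once this is in place, the projection of any ray onto $\mathcal N$ is unambiguous, and both inequalities $c_1\lessgtr l_{\mathcal N}$ follow cleanly.
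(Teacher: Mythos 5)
Your proposal is correct and follows essentially the same route as the paper: mountain pass existence via Lemmas \ref{ps1}--\ref{ps3}, then the fibering/Nehari analysis showing each ray through a point of $\mathcal N$ has a unique maximum at $t=1$, yielding $c_1\le l_{\mathcal N}$ and, since the mountain pass solution lies in $\mathcal N$, the identification $c_1=l_{\mathcal N}=I(\tilde u,\tilde v)$. The only cosmetic caveat is that the unique fibering critical point $t^*(u,v)$ need not exist for \emph{every} nonzero $(u,v)$ (e.g.\ semitrivial pairs with $G(u,v)=0$ when $r=p$), but, exactly as in the paper, you only use it on rays through elements of $\mathcal N$, where the argument is sound.
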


\begin{proof}
  By Lemmata \ref{ps1}, \ref{ps2} and \ref{ps3}, and applying the mountain pass theorem \cite[Theorem 2.1]{Ambrosetti1973}, there exists a critical point $(\tilde{u},\tilde{v})\in \W$ for the functional $I$ which implies $(\tilde{u},\tilde{v})$ is a solution for \eqref{system_eqn}. Further, $I(\tilde{u},\tilde{v})=c_1>0$ implies that $(\tilde{u},\tilde{v})$ is a nontrivial solution. Next, we prove that $(\tilde{u},\tilde{v})$ has the least energy, i.e., $I(\tilde{u},\tilde{v})=l_{S}$. By the definition of $l_\mathcal{N}$ and $l_S$, we have 
\begin{equation}\label{min_1}
l_\mathcal{N}\leq l_{S}\leq c_1=I(\tilde{u},\tilde{v}).
\end{equation}
Define the function $k:\R^{+}\rightarrow\R$ as $$k(t)=I(tu,tv)=\frac{t^{p}}{p}\left(\left[u\right]_{s, p}^{p}+\left[v\right]_{s, p}^{p}\right)- \frac{\gamma t^{p_{s}^{*}(m)}}{p_{s}^{*}(m)} \int_{\Omega} \frac{\left|u\right|^{\alpha}\left|v\right|^{\beta}}{|x|^{m}}\dx-\frac{\eta t^{r}}{r} \int_{\Omega}\left(\left|u\right|^{r}+\left|v\right|^{r}\right)\dx.$$ We observe that $k'(t)=0$ if and only if $(tu,tv)\in\mathcal{N}$. Consequently, 
$k(t)>0$ for sufficiently small $t$ and $k(t)<0$ for sufficiently large $t$. Further observe that $k'(t)=0$ if and only if $$[u]_{s,p}^p+[v]_{s,p}^p=t^{p_s^\ast(m)-p}\gamma\int_{\Omega} \frac{\left|u\right|^{\alpha}\left|v\right|^{\beta}}{|x|^{m}}\dx+t^{r-p}\eta \int_{\Omega}\left(\left|u\right|^{r}+\left|v\right|^{r}\right)\dx.$$ Since $p_s^\ast(m)-p,\, r-p\geq0$, the right-hand side in the above is a strictly increasing function. Thus, there exists a unique $t_{0}\in(0,\infty)$ such that $$\max\limits_{t\in[0,\infty]}k(t)=k(t_{0})=I(t_{0}u,t_{0}v).$$ 
% From the above analysis, we have 
% \begin{equation*}
% k^{\prime}(t)>0  \text{ for } t<t_{0} \text{ and } k^{\prime}(t)<0  \text{ for } t>t_{0}. 
% \end{equation*}
Consequently, $k^{\prime}(t_{0})=0$ and hence $(t_0u,t_0v)\in\mathcal{N}$. In particular, if $(u,v)\in\mathcal{N},$ then $k^{\prime}(1)=0$. Therefore,  $$\max\limits_{t\in[0,\infty]}k(t)=k(1)=I(u,v).$$
For any $(u,v)\in\mathcal{N}$, consider the map $\gamma:[0,1]\rightarrow  \W$ defined as $\gamma(t)=(tt_{1}u,tt_{1}v),$ where $t_{1}>0$ such that $I(t_{1}u,t_{1}v)<0$. This implies that $\gamma\in\varGamma$. 
Therefore, we have $$c_1\leq\max\limits_{t\in[0,1]}I(\gamma(t))=\max\limits_{t\in[0,1]}I(tt_{1}u,tt_{1}v)\leq\max\limits_{t\geq 0}I(tu,tv)=I(u,v).$$ 
Hence,  
\begin{equation}\label{min_2}
    I(\tilde{u},\tilde{v})  =c_1\leq \inf_{(u,v)\in\mathcal{N}}I(u,v)=l_\mathcal{N}.
\end{equation}
 From \eqref{min_1} and \eqref{min_2}, we have 
 \begin{equation*}
l_\mathcal{N}= l_{S}=c_1=I(\tilde{u},\tilde{v}),
\end{equation*}
which proves that $(\tilde{u},\tilde{v})$ has the least energy.
\end{proof}

Next, we show that $(\tilde{u},\tilde{v})$ is non semi-trivial. Take $\tilde{\eta}\ge 0$. To show the dependency on $\tilde{\eta}$, we denote $I, \n, \tilde{u},\tilde{v}$ by $I_{\tilde{\eta}}, \n_{\tilde{\eta}}, u_{\tilde{\eta}},v_{\tilde{\eta}}$ respectively. In view of this,
$$\theta(\tilde{\eta}):=\inf_{\n_{\tilde{\eta}}} 
 I_{\tilde{\eta}}.$$
Recall that for every $(u,v)\in \W\setminus\{(0,0)\}$,
\begin{enumerate}[label=(\roman*)]
    \item\label{1} $\frac{d}{dt}I_{\tilde{\eta}}(tu,tv)=0$ if and only if  $(tu,tv)\in\n_{\tilde{\eta}}$.
    \medskip
    \item\label{2} there exists a unique $t_{u,v}\in(0,\infty)$ such that $I_{\tilde{\eta}}(t_{u,v}u,t_{u,v}v) = \max\{ I_{\tilde{\eta}}(tu,tv) : t \in  [0,\infty)\}$.
\end{enumerate}
We also recall $(u_0, v_0) \in \W$ defined in \eqref{def}. Then there exists $t_{u_0,v_0} \in (0, \infty)$ such that $$(t_{u_0,v_0} u_0, t_{u_0,v_0} v_0) \in \mathcal{N}_{\tilde{\eta}}.$$ In view of \eqref{en-1}, for $\tilde{\eta} = \eta>0$ we have
$$\theta(\eta) \leq I_\eta(t_{u_0,v_0}u_0,t_{u_0,v_0}v_0)= \sup_{t\geq0}I_\eta(tu_0,tv_0)<\tilde{c}.$$

\begin{remark}\label{lower-bound-1}
For $0\leq\eta_2\leq\eta_1$ and $(u,v)\in\n_{\eta_2}$, 
    \begin{equation*}
        \theta(\eta_1)\leq \max_{t\geq0}I_{\eta_1}(tu,tv)\leq  \max_{t\geq0}I_{\eta_2}(tu,tv)=I_{\eta_2}(u,v).
    \end{equation*}
    This implies that $\theta(\eta_2)\geq\theta(\eta_1)$. Hence, $\theta$ is non-increasing. Observing the proof of Lemma \ref{ps3}-(a), $\Theta$ is independent of $\eta$ for every $\eta\in(0,\frac{\la_1}{2})$. Thus, $\theta(\eta)\geq\Theta$ uniformly for all $\eta\in[0,\frac{\la_1}{2}).$ 
\end{remark}
\begin{lemma}\label{aux_lem_1-1.1}
    If $\eta\to0$, then $\theta(\eta)\to \tilde{c}$.
\end{lemma}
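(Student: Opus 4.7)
The plan is as follows. By Lemma \ref{ps2} we already have $\theta(\eta)<\tilde c$ for every admissible $\eta$, and by Remark \ref{lower-bound-1} the map $\eta\mapsto\theta(\eta)$ is non-increasing with uniform floor $\theta(\eta)\geq\Theta$ on $[0,\la_1/2)$. Hence $L:=\lim_{\eta\to 0^+}\theta(\eta)$ exists and satisfies $\Theta\leq L\leq\tilde c$; the task is therefore to show $L\geq\tilde c$.

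The key observation is that $\tilde c$ coincides with the Nehari level of the limit homogeneous functional $I_0:=I|_{\eta=0}$. Indeed, on
$$\n_0:=\left\{(u,v)\in\W\setminus\{(0,0)\}:\|(u,v)\|_\W^p=\gamma\int_\Omega\frac{|u|^\alpha|v|^\beta}{|x|^m}\dx\right\},$$
one has $I_0(u,v)=\bigl(\tfrac1p-\tfrac1{p_s^*(m)}\bigr)\|(u,v)\|_\W^p$, and a direct computation using the definition of $S_{\al,\be}$ together with the Nehari constraint yields $\inf_{\n_0}I_0=\tilde c$ (the exponent $(d-m)/(sp-m)$ appears via the identity $p_s^*(m)/(p_s^*(m)-p)=(d-m)/(sp-m)$). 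Importantly this infimum is \emph{not} attained on bounded $\Omega$ by Proposition \ref{not_attend}, so only minimizing sequences are available.

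To exploit this I will take $\eta_n\downarrow 0$ and pick $(u_n,v_n)\in\n_{\eta_n}$ with $I_{\eta_n}(u_n,v_n)=\theta(\eta_n)$, which is available by Proposition \ref{subexistence&uniqueness}. Mimicking the boundedness step of Lemma \ref{ps1} and using $\theta(\eta_n)\leq\tilde c$, I will show that $\{(u_n,v_n)\}$ is bounded in $\W$ uniformly in $n$; the compact embedding $\W\hookrightarrow L^r(\Omega)\times L^r(\Omega)$ then gives $\eta_n\int_\Omega(|u_n|^r+|v_n|^r)\dx\to 0$. For each $n$, let $t_n>0$ be the unique scaling with $(t_nu_n,t_nv_n)\in\n_0$; combining the Nehari identities for $\n_{\eta_n}$ and $\n_0$ yields
$$t_n^{p_s^*(m)-p}=1+\frac{\eta_n\int_\Omega(|u_n|^r+|v_n|^r)\dx}{\gamma\int_\Omega|u_n|^\alpha|v_n|^\beta|x|^{-m}\dx}.$$
Given $t_n\to 1$, the bound $I_0(t_nu_n,t_nv_n)\geq\tilde c$ together with continuity in $t$ and the uniform boundedness from Step 3 give
$$\theta(\eta_n)=I_{\eta_n}(u_n,v_n)=I_0(u_n,v_n)+o(1)=I_0(t_nu_n,t_nv_n)+o(1)\geq\tilde c+o(1),$$
so $\liminf_n\theta(\eta_n)\geq\tilde c$, which combined with the upper bound forces $\theta(\eta_n)\to\tilde c$.

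The main difficulty I anticipate is precisely the step $t_n\to 1$: its proof requires a uniform \emph{lower} bound on the quantity $\int_\Omega|u_n|^\alpha|v_n|^\beta|x|^{-m}\dx$, which is not automatic from the mere boundedness of $(u_n,v_n)$. Here the uniform mountain-pass floor $\theta(\eta_n)\geq\Theta$ from Remark \ref{lower-bound-1} is crucial: it gives $\|(u_n,v_n)\|_\W^p\geq p\Theta$, and the Nehari identity $\|(u_n,v_n)\|_\W^p=\gamma\int|u_n|^\alpha|v_n|^\beta|x|^{-m}\dx+\eta_n\int(|u_n|^r+|v_n|^r)\dx$ then forces $\gamma\int|u_n|^\alpha|v_n|^\beta|x|^{-m}\dx\geq p\Theta-o(1)$, supplying exactly the lower bound needed. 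Everything else reduces to continuity and bookkeeping.
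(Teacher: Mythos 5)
Your proposal is correct and, in its core mechanism, matches the paper: take $\eta_n\downarrow 0$, pick Nehari minimizers $(u_n,v_n)\in\n_{\eta_n}$ at level $\theta(\eta_n)$, show uniform boundedness, project onto the limit manifold $\n_0$ by a factor $t_n$, and use the uniform floor $\theta(\eta_n)\ge\Theta$ from Remark \ref{lower-bound-1} together with $\eta_n\int_\Omega(|u_n|^r+|v_n|^r)\dx=o_n(1)$ to control $t_n$ and conclude $\theta(\eta_n)\ge\tilde c+o_n(1)$. Where you genuinely deviate is in the identification $\theta(0)=\inf_{\n_0}I_0=\tilde c$: the paper proves the upper bound by evaluating the fibering maximum along the truncated bubbles $(u_0,v_0)=(\al^{1/p}u_{m,\var,1},\be^{1/p}u_{m,\var,1})$ and letting $\var\to0$, and the lower bound by an almost-critical sequence computation, whereas you obtain the identity in one stroke from the exact scaling relation: on rays, $\max_{t>0}I_0(tu,tv)=\left(\frac1p-\frac1{\psm}\right)\gamma^{-\frac{p}{\psm-p}}\left(\|(u,v)\|_{\W}^p\big/\big(\int_\Omega|u|^\al|v|^\be|x|^{-m}\dx\big)^{p/\psm}\right)^{\frac{\psm}{\psm-p}}$, the quotient is scale-invariant and every pair with nonvanishing cross term projects into $\n_0$, so the Nehari level equals $\left(\frac1p-\frac1{\psm}\right)\gamma^{-\frac{p}{\psm-p}}S_{\al,\be}^{\frac{d-m}{sp-m}}=\tilde c$. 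This is legitimate and arguably cleaner, since it avoids both the bubble estimates and the PS-sequence step (the paper's route has the advantage of reusing the Lemma \ref{ps2} machinery it needs anyway). Your handling of $t_n$ is also slightly sharper: the identity $t_n^{\psm-p}=1+\eta_n\int(|u_n|^r+|v_n|^r)\dx\big/\gamma\int|u_n|^\al|v_n|^\be|x|^{-m}\dx$, combined with $\gamma\int|u_n|^\al|v_n|^\be|x|^{-m}\dx\ge\|(u_n,v_n)\|_{\W}^p-o_n(1)\ge p\Theta-o_n(1)$, gives $t_n\to1$ directly (and, as a byproduct, guarantees the cross term is positive so $t_n$ is well defined for large $n$), whereas the paper only shows $t_n$ is bounded by a contradiction argument; both rest on the same Remark \ref{lower-bound-1}. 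Two cosmetic points: the vanishing of $\eta_n\int(|u_n|^r+|v_n|^r)\dx$ needs only the continuous embedding $\W\hookrightarrow L^r(\Omega)\times L^r(\Omega)$, not compactness, and your final chain should be stated for an arbitrary sequence $\eta_n\to0^+$ (or invoke the monotonicity of $\theta$) to pass from sequences to the limit $\eta\to0$.
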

\begin{proof}
    First, we claim that $\theta(0)=\tilde{c}$. Observe that
    $$\max_{t\in[0,\infty)}I_0(tu_0,tv_0)=\left(\frac1p-\frac{1}{\psm}\right)\gamma^{-\frac{p}{p_s^*(m)-p}}\left(\frac{\|(u_0,v_0)\|_{\W}^p}{(\int_{\Om}\frac{|u_0|^\al|v_0|^\be}{|x|^m}\dx)^{\frac{p}{\psm}}}\right)^{\frac{\psm}{\psm-p}},$$
    where $u_0=\al^{\frac1p}u_{m,\var,1},v_0=\be^{\frac1p}u_{m,\var,1}$.
    By the definition of $\theta(0)$ and following the proof of Lemma \ref{ps2}, 
    $$\theta(0)\leq \max_{t\in[0,\infty)}I_0(tu_0,tv_0)\leq \tilde{c}+O(\var^{\frac{d-ps}{p-1}})+O(\var^{\frac{d-m}{p-1}}).$$
    Taking $\var\to0$, we get $\theta(0) \le \tilde{c}$. Let $(u_n,v_n)\in\W$ be a sequence such that $I_0(u_n,v_n)\to\theta(0)$ and $I_0'(u_n,v_n)\to0$. As a result
    \begin{align}\label{par8-1}
        &\frac1p\|(u_n,v_n)\|_{\W}^p-\frac{\gamma}{\psm}\int_{\Om}\frac{|u_n|^\al|v_n|^\be}{|x|^m}\dx=\theta(0)+o_n(1), \no\\
        & \|(u_n,v_n)\|_{\W}^p=\gamma\int_{\Om}\frac{|u_n|^\al|v_n|^\be}{|x|^m}\dx+o_n(1).
    \end{align}
    Using \eqref{par8-1}, 
    \begin{align}\label{par10-1}
        \left(\frac1p-\frac1{\psm}\right)\|(u_n,v_n)\|_{\W}^p=\theta(0)+o_n(1).
    \end{align}
    From Remark \ref{lower-bound-1} we get $\theta(0)>0$ and hence using \eqref{par10-1}, $u_n,v_n \neq 0$ for large $n$. Moreover, using the definition of $S_{\al,\be}$ and \eqref{par8-1},
    \begin{align}\label{par10.5-1}
    S_{\al,\be}\leq \frac{\|(u_n,v_n)\|_{\W}^p}{(\int_{\Om}\frac{|u_n|^\al|v_n|^\be}{|x|^m}\dx)^{\frac{p}{\psm}}}=\gamma^{\frac p{\psm}}\|(u_n,v_n)\|_{\W}^{p(1-\frac{p}{\psm})}+o_n(1).
    \end{align}
   Hence \eqref{par10-1} and \eqref{par10.5-1} yield $\theta(0)\geq \tilde{c}$ and the claim holds. Next, let $\{\eta_n\}$ be a sequence satisfying $\eta_n\downarrow0$. For every $n$, we have $\theta(\eta_n)<\tilde{c}$ and there exists $(u_{\eta_n},v_{\eta_n})\in \n_{\eta_n}$ (for brevity, we write $(u_n,v_n)$ instead of $(u_{\eta_n},v_{\eta_n})$) such that  $I_{\eta_n}(u_n,v_n)=\theta(\eta_n)$. Furthermore, there exists $t_n>0$ such that 
    \begin{align*}
        \theta(0)&\leq \max_{t\geq0}I_0(tu_n,tv_n)=I_0(t_nu_n,t_nv_n)=I_{\eta_n}(t_nu_n,t_nv_n)+\frac{\eta_n}{r} t_n^r\int_{\Om}(|u_n|^r+|v_n|^r)\dx\\
        &\leq \max_{t\geq0}I_{\eta_n}(tu_n,tv_n)+\frac{\eta_n}{r}  t_n^r\int_{\Om}(|u_n|^r+|v_n|^r)\dx\\
        &=I_{\eta_n}(u_n,v_n)+\frac{\eta_n}{r}t_n^r\int_{\Om}(|u_n|^r+|v_n|^r)\dx=\theta(\eta_n)+\frac{\eta_n}{r}t_n^r\int_{\Om}(|u_n|^r+|v_n|^r)\dx.
    \end{align*}
    Observe that the sequence $\{(u_n,v_n)\}$ is bounded in $\W$. Consequently, we get the convergence if $\{t_n\}$ is bounded. On the contrary, suppose $t_n\to\infty$. Since $(t_nu_n, t_n v_n)\in\n_{0}$, 
    \begin{equation}\label{para3-1}
        \|(u_n,v_n)\|_{\W}^p=t_n^{\psm-p}{\gamma}\int_{\Om}\frac{|u_n|^\al|v_n|^\be}{|x|^m}\dx.
    \end{equation}
    Since $\|(u_n,v_n)\|_{\W}$ is bounded, \eqref{para3-1} implies $$\int_{\Om}\frac{|u_n|^\al|v_n|^\be}{|x|^m}\dx=o_n(1).$$
    Since $(u_n,v_n)\in\n_{\eta_n}$, $$\|(u_n,v_n)\|_{\W}^p=\gamma\int_{\Om}\frac{|u_n|^\al|v_n|^\be}{|x|^m}\dx+\eta_n \int_{\Om}(|u_n|^{r}+|v_n|^{r})\dx.$$
Further, $\eta_n\int_{\Om}(|u_n|^r+|v_n|^r)\dx=o_n(1)$.
Thus, $(u_n,v_n)\to(0,0)$ in $\W$. Since $I_{\eta_n}(u_n,v_n)=\theta(\eta_n)$, we get $\theta(\eta_n)= o_n(1).$ This contradicts the fact that $\theta(\eta_n)>\Theta$ for every $n$ large enough (see Remark \ref{lower-bound-1}). This completes the proof.
\end{proof}

\begin{proposition}\label{submain-2}
Let $0 \leq m \le sp$ and $p \le r < p^*_s$. Let $\al,\be>1$ be such that $\al + \be = p^*_s(m)$. Let  $\gamma, r$ be as given in Proposition \ref{subexistence&uniqueness}. Then there exists $0<\eta^* < \min\{\lambda_1, \frac{S_{\al+\be}}{c_{0}}\}$ such that for every $\eta \in (0, \eta^*)$, $(u_{\eta}, v_{\eta}) \neq (u_{\eta},0)$ or $(u_{\eta}, v_{\eta}) \neq (0,v_{\eta})$ a.e. in $\Omega$.
\end{proposition}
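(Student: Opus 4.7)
I would argue by contradiction, exploiting the limit $\theta(\eta) \to \tilde{c}$ established in Lemma \ref{aux_lem_1-1.1}. Suppose the conclusion fails; then there is a sequence $\eta_n \downarrow 0$ with $\eta_n < \min\{\lambda_1, S_{\al+\be}/c_0\}$ along which the least-energy solution $(u_{\eta_n}, v_{\eta_n}) \in \mathcal{N}_{\eta_n}$ obtained in Proposition \ref{subexistence&uniqueness} is semi-trivial. After relabelling, assume $v_{\eta_n} \equiv 0$ for every $n$ (the other case follows by symmetry). Since $\be > 1$, the coupling term $|u|^{\al-2}u|v|^{\be}|x|^{-m}$ vanishes identically when $v\equiv 0$, so the first component of \eqref{system_eqn} reduces to the scalar Dirichlet problem
\begin{equation*}
    (-\Delta_p)^s u_{\eta_n} = \eta_n |u_{\eta_n}|^{r-2} u_{\eta_n} \text{ in } \Omega, \qquad u_{\eta_n} = 0 \text{ in } \rd \setminus \Omega,
\end{equation*}
and testing this equation with $u_{\eta_n}$ (equivalently, using $(u_{\eta_n}, 0)\in \mathcal{N}_{\eta_n}$) gives
\begin{equation*}
    [u_{\eta_n}]_{s,p}^p = \eta_n \int_\Omega |u_{\eta_n}|^r \dx, \qquad \theta(\eta_n) = I_{\eta_n}(u_{\eta_n}, 0) = \Big(\tfrac1p - \tfrac1r\Big)[u_{\eta_n}]_{s,p}^p.
\end{equation*}

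I would then split into two cases according to $r$. If $r = p$, the Nehari identity combined with the variational characterisation \eqref{lm_1} forces $[u_{\eta_n}]_{s,p}^p \le (\eta_n/\la_1)\, [u_{\eta_n}]_{s,p}^p$, which is incompatible with $u_{\eta_n}\not\equiv 0$ and $\eta_n < \la_1$; hence no nontrivial semi-trivial pair can lie in $\mathcal{N}_{\eta_n}$ and the proposition is immediate. If $r > p$, the continuous embedding $\wps \hookrightarrow L^r(\Omega)$, valid because $r < p_s^*$, yields a constant $C>0$ independent of $n$ with $\int_\Omega |u_{\eta_n}|^r \dx \le C\, [u_{\eta_n}]_{s,p}^r$. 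Substituting into the Nehari identity and solving for $[u_{\eta_n}]_{s,p}^p$ produces the quantitative blow-up
\begin{equation*}
    [u_{\eta_n}]_{s,p}^p \ge (C\eta_n)^{-\frac{p}{r-p}}, \qquad \theta(\eta_n) \ge \Big(\tfrac1p-\tfrac1r\Big)(C\eta_n)^{-\frac{p}{r-p}} \longrightarrow +\infty.
\end{equation*}

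This contradicts Lemma \ref{aux_lem_1-1.1}, which asserts $\theta(\eta_n) \to \tilde{c} < \infty$. Therefore there exists $\eta^*\in (0, \min\{\la_1, S_{\al+\be}/c_0\})$ such that for every $\eta\in (0,\eta^*)$ both components $u_\eta, v_\eta$ are nontrivial, which is exactly the claim. The main obstacle I anticipate is producing the quantitative lower bound $[u_{\eta_n}]_{s,p}^p \gtrsim \eta_n^{-p/(r-p)}$ from the Nehari identity; this rests squarely on the subcritical Sobolev embedding $\wps \hookrightarrow L^r(\Omega)$. Once the blow-up is in hand, the remaining step is a comparison of two scalars, one divergent and the other bounded by $\tilde c$.
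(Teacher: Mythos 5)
Your proof is correct, but it runs along a genuinely different track from the paper's. The paper argues directly: combining the two Nehari-type identities with Lemma \ref{aux_lem_1-1.1} ($\theta(\eta)\to\tilde c$), it shows that $\gamma\int_{\Omega}|u_{\eta}|^{\al}|v_{\eta}|^{\be}|x|^{-m}\dx=\bigl(\tfrac1p-\tfrac1{p^*_s(m)}\bigr)^{-1}\tilde c+o_\eta(1)$, and since this integral vanishes on any semi-trivial pair, both components must be nonzero for small $\eta$. You instead assume semi-triviality, say $v_{\eta_n}\equiv 0$ along $\eta_n\downarrow 0$, note that the coupling then drops out of the Nehari identity, and derive a contradiction: for $r=p$ the inequality $[u]_{s,p}^p=\eta_n\|u\|_{L^p}^p\le(\eta_n/\la_1)[u]_{s,p}^p$ is incompatible with $u\neq0$ and $\eta_n<\la_1$ (so here you need neither smallness of $\eta$ beyond the standing hypothesis nor Lemma \ref{aux_lem_1-1.1}), while for $r>p$ the subcritical embedding forces $[u_{\eta_n}]_{s,p}^p\gtrsim\eta_n^{-p/(r-p)}$ and hence $\theta(\eta_n)=(\tfrac1p-\tfrac1r)[u_{\eta_n}]_{s,p}^p\to\infty$, contradicting the uniform upper bound on $\theta$. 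The trade-off: your route only uses the \emph{upper} bound $\theta(\eta)<\tilde c$ (indeed any uniform bound, e.g.\ from the monotonicity of $\theta$ in Remark \ref{lower-bound-1}, which also covers the degenerate case $m=sp$ where $\tilde c$ is not meaningful), i.e.\ the easy direction, and is more elementary when $r=p$; the paper's route uses the full limit $\theta(\eta)\to\tilde c$, which is heavier, but it yields the quantitative convergence of the coupling integral that is reused later (Lemma \ref{aux_lem_2}) in the category argument. One small point of care in your write-up: in the case $r>p$ you cite Lemma \ref{aux_lem_1-1.1} for finiteness of the limit, but all you actually need (and all that is available when $m=sp$) is a uniform upper bound for $\theta(\eta_n)$; it is worth phrasing it that way.
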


\begin{proof}
   Using $\theta(\eta)=I_{\eta}(u_{\eta},v_{\eta})$ and $(u_{\eta},v_{\eta})\in\n_{\eta}$, we get
    $$\theta(\eta)=\left(\frac1p-\frac1r\right)\|(u_{\eta},v_{\eta})\|_{\W}^p-\left(\frac1{\psm}-\frac1r\right)\gamma\int_{\Om}\frac{|u_{\eta}|^\al|v_{\eta}|^\be}{|x|^m}\dx.$$
    Since $\theta(\eta)<\tilde{c}$, $\{(u_{\eta},v_{\eta})\}_{\eta}$ is bounded in $\W$. Again since $(u_{\eta},v_{\eta})\in\n_{\eta}$, we have $$\|(u_{\eta},v_{\eta})\|_{\W}^p=\gamma\int_{\Om}\frac{|u_{\eta}|^\al|v_{\eta}|^\be}{|x|^m}\dx+o_\eta(1).$$ Using Lemma \ref{aux_lem_1-1.1}, $\theta(\eta)\to\tilde{c}$ as $\eta\to0$. Thus, 
    $$0<\tilde{c}=\left(\frac1p-\frac1{\psm}\right)\gamma\int_{\Om}\frac{|u_{\eta}|^\al|v_{\eta}|^\be}{|x|^m}\dx+o_\eta(1).$$
Hence, $(u_{\eta}, v_{\eta}) \neq (u_{\eta},0)$ or $(u_{\eta}, v_{\eta}) \neq (0,v_{\eta})$ a.e. in $\Omega$.   
\end{proof}

\noi \textbf{Proof of Theorem \ref{existence-main}:} Proof follows using Proposition \ref{subexistence&uniqueness} and Proposition \ref{submain-2}. \qed

\begin{remark}[Positive solution]\label{pos}
 For $(u,v)\in\W$, we define
\begin{equation}\label{I+}
I_+(u,v)=\frac{1}{p}\|(u,v)\|^p_{\mathcal{W}}
-\frac{\gamma}{p^*_s(m)}\int_{\Omega} \frac{(u^+)^{\alpha}(v^+)^{\beta}}{\abs{x}^m} \dx - \frac{\eta}{r} \int_{\Omega} \left( (u^+)^r + (v^+)^r \right) \dx.
\end{equation} Observe that $I_+\in C^1(\W,\R)$ and every critical point of $I_+$ corresponds to a non-negative weak solution of \eqref{system_eqn}. We remark that Lemmas \ref{ps1}--\ref{ps3} hold for $I_+$ as well. As a consequence, we get a non-negative, non semi-trivial solution $(\hat{u},\hat{v})$ of \eqref{system_eqn}, provided $\gamma,\eta$ and $r$ as in Theorem \ref{existence-main}. Further, taking $(\hat{u},0)$ and $(0,\hat{v})$ as test functions in $\W$ and using the fact that $(\hat{u},\hat{v})$ weakly solves \eqref{system_eqn}, we see that the following hold weakly: 
\begin{align*}
    (-\Delta_p)^s \hat{u} \ge 0, \text{ and } (-\Delta_p)^s \hat{v} \ge 0 \text{ a.e. in } \Omega.
\end{align*}
Moreover, since $\hat{u}, \hat{v} \neq 0$, we apply the strong maximum principle (see \cite[Lemma A.1]{BrFr}) to get $\hat{u}, \hat{v} >0$ a.e. in $\Omega$.
\end{remark}

\section{Existence of category many nontrivial solutions}

In this section, we assume that $m=0, p, \al, \be \ge 2, \al+\be=p^*_s$ and consider the following condition on $\eta,r$:
\begin{align*}
     \eta < \min\left\{\frac{\la_1}{2},\frac{S_{\al+\be}}{c_0}\right\} \text{ when } r=p, \text{ and } \begin{cases}
r>p^{*}_s-p^{\prime}, & \text { if } d<p^{2} s;\\
r \geq p, & \text { if } d \geq p^2s,
\end{cases}
\end{align*}
where $c_0$ is given in \eqref{c0}.
We define $\psi_\eta:\W\to\R$ by $$\psi_\eta(u,v)=\langle I_\eta'(u,v),(u,v)\rangle=\|(u,v)\|_{\W}^p-\gamma\int_{\Omega} {\left|u\right|^{\alpha}\left|v\right|^{\beta}}\dx-\eta \int_{\Omega}\left(\left|u\right|^{r}+\left|v\right|^{r}\right)\dx.$$
Observe that $\psi_\eta$ is of class $\C^2$ and
$$\langle\psi_\eta'(u,v),(u,v)\rangle=p\|(u,v)\|_{\W}^p-\gamma\, p_s^*\int_{\Omega} {\left|u\right|^{\alpha}\left|v\right|^{\beta}}\dx-\eta r \int_{\Omega}\left(\left|u\right|^{r}+\left|v\right|^{r}\right)\dx,$$
and for every $(u,v)\in\n_\eta$ (defined in \eqref{nehari}),
\begin{equation}\label{aaa}
    \langle\psi_\eta'(u,v),(u,v)\rangle=\gamma(p- p_s^*)\int_{\Omega} {\left|u\right|^{\alpha}\left|v\right|^{\beta}}\dx+\eta(p- r) \int_{\Omega}\left(\left|u\right|^{r}+\left|v\right|^{r}\right)\dx<0.
\end{equation}
Thus, $I_\eta,\psi_\eta,\n_\eta$ satisfy the structural assumptions given in \cite[Section 5.3]{willem}.

\begin{definition}
 We say that $I_\eta$ restricted on $\n_\eta$ satisfies PS condition at level $c$, $\text{(PS)}_c^{\n_\eta}$ in short, if for $\{(u_n,v_n)\}\subset\n_\eta$ such that
\begin{equation}\label{ps_rest}
I_\eta(u_n,v_n)\to c,\;\min_{\la\in\R}\|I_\eta'(u_n,v_n)-\la\psi_\eta'(u_n,v_n)\|_{\W^*}\to0,\; \text{ as }n\to\infty,  
\end{equation}
has a convergent sequence in $\W$.   
\end{definition}

\begin{lemma}\label{ps4}
$I_\eta$ satisfies (PS$)_c^{\n_\eta}$ for $c<\tilde{c}$.
\end{lemma}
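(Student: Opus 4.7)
The plan is to reduce the restricted Palais--Smale condition to the standard $(PS)_c$ condition already furnished by Lemma \ref{ps1}. Fix a sequence $\{(u_n,v_n)\}\subset\n_\eta$ satisfying \eqref{ps_rest} with $c<\tilde{c}$, and let $\la_n\in\R$ essentially realise the minimum, so that $I_\eta'(u_n,v_n)-\la_n\psi_\eta'(u_n,v_n)\to0$ in $\W^*$. The strategy is to show $\la_n\to 0$; once this is done, the decomposition
\begin{align*}
I_\eta'(u_n,v_n)=\bigl[I_\eta'(u_n,v_n)-\la_n\psi_\eta'(u_n,v_n)\bigr]+\la_n\psi_\eta'(u_n,v_n)
\end{align*}
goes to $0$ in $\W^*$ (after checking that $\{\psi_\eta'(u_n,v_n)\}$ is bounded in $\W^*$), turning $\{(u_n,v_n)\}$ into an ordinary $(PS)_c$ sequence for $I_\eta$, and Lemma \ref{ps1} delivers the required strong convergence.

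First I would establish that $\{(u_n,v_n)\}$ is bounded in $\W$. Using $\psi_\eta(u_n,v_n)=0$ to eliminate the norm term from $I_\eta(u_n,v_n)=c+o_n(1)$ yields
\begin{align*}
\gamma\Bigl(\tfrac{1}{p}-\tfrac{1}{p_s^*}\Bigr)\int_\Om|u_n|^\al|v_n|^\be\dx+\eta\Bigl(\tfrac{1}{p}-\tfrac{1}{r}\Bigr)\int_\Om(|u_n|^r+|v_n|^r)\dx=c+o_n(1).
\end{align*}
For $r>p$ both integrals are thereby bounded, and the Nehari identity $\|(u_n,v_n)\|_\W^p=\gamma\int_\Om|u_n|^\al|v_n|^\be\dx+\eta\int_\Om(|u_n|^r+|v_n|^r)\dx$ yields boundedness in $\W$. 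For $r=p$ I would absorb the $\eta$-term via the bound $\eta<\la_1/2$ together with \eqref{lm_1}. Next I would derive a uniform lower bound $\|(u_n,v_n)\|_\W\ge\de>0$: from the Nehari identity combined with \eqref{HS3} (and the continuous embedding $\W\hookrightarrow L^r(\Om)\times L^r(\Om)$), one obtains $\|(u_n,v_n)\|_\W^p\le C_1\|(u_n,v_n)\|_\W^{p_s^*}+C_2\|(u_n,v_n)\|_\W^r$ (or its $r=p$ analogue after using \eqref{lm_1} and $\eta<\la_1$), and since the remaining exponents on the right strictly exceed $p$, the lower bound follows.

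The key step is then to show $\la_n\to0$. Testing the relation $I_\eta'(u_n,v_n)-\la_n\psi_\eta'(u_n,v_n)=o_n(1)$ in $\W^*$ against $(u_n,v_n)$ and using $\langle I_\eta'(u_n,v_n),(u_n,v_n)\rangle=\psi_\eta(u_n,v_n)=0$ gives
\begin{align*}
-\la_n\,\langle\psi_\eta'(u_n,v_n),(u_n,v_n)\rangle=o_n(1)\,\|(u_n,v_n)\|_\W=o_n(1).
\end{align*}
By \eqref{aaa} and the Nehari identity, $-\langle\psi_\eta'(u_n,v_n),(u_n,v_n)\rangle$ is a positive linear combination of $\int_\Om|u_n|^\al|v_n|^\be\dx$ and $\int_\Om(|u_n|^r+|v_n|^r)\dx$, whose sum is comparable to $\|(u_n,v_n)\|_\W^p\ge\de^p$. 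Hence this quantity stays uniformly bounded below by a positive constant, forcing $\la_n\to0$ and completing the reduction.

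The principal obstacle is securing this uniform strict positivity of $|\langle\psi_\eta'(u_n,v_n),(u_n,v_n)\rangle|$ in the critical case $r=p$: here the $L^p$ term contributes the factor $(p-r)=0$ in \eqref{aaa}, so the positivity must come entirely from $\gamma(p_s^*-p)\int_\Om|u_n|^\al|v_n|^\be\dx$, and it is precisely the stronger condition $\eta<\la_1/2$ (through $(1-\eta/\la_1)\|(u_n,v_n)\|_\W^p\le\gamma\int_\Om|u_n|^\al|v_n|^\be\dx$) that prevents this integral from degenerating along the sequence.
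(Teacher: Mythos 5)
Your proposal is correct, and its skeleton is the one the paper uses: test $I_\eta'(u_n,v_n)-\la_n\psi_\eta'(u_n,v_n)\to0$ against $(u_n,v_n)$, use $(u_n,v_n)\in\n_\eta$ to kill the $I_\eta'$ term, deduce $\la_n\to0$, conclude that $\{(u_n,v_n)\}$ is an ordinary PS sequence at level $c<\tilde{c}$, and invoke Lemma \ref{ps1}. The difference lies in how the possible degeneracy $\langle\psi_\eta'(u_n,v_n),(u_n,v_n)\rangle\to0$ is treated. The paper keeps a case split: it passes to a subsequence with $\langle\psi_\eta'(u_n,v_n),(u_n,v_n)\rangle\to l\le0$ by \eqref{aaa}; if $l<0$ it gets $\la_n\to0$ as you do, while if $l=0$ it concludes directly that $(u_n,v_n)\to(0,0)$ in $\W$, which already yields a convergent subsequence and ends the proof (as written this step is slightly loose for $r=p$, since the $L^r$ term in \eqref{aaa} then carries the factor $p-r=0$; one has to add exactly the $\eta<\la_1$ absorption through \eqref{lm_1} and the Nehari identity that you employ). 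You instead preclude the degenerate case altogether by proving a uniform lower bound $\|(u_n,v_n)\|_{\W}\ge\de>0$ on $\n_\eta$ from \eqref{nehari}, \eqref{HS3} and the embedding into $L^r(\Om)\times L^r(\Om)$ (with the $\la_1$-absorption when $r=p$), which makes $-\langle\psi_\eta'(u_n,v_n),(u_n,v_n)\rangle$ uniformly positive and the argument case-free; the small price is the extra (routine) check that $\psi_\eta'(u_n,v_n)$ stays bounded in $\W^*$, which you only assert parenthetically but which follows from H\"older and the Sobolev/Hardy embeddings on a bounded sequence. Both routes are sound; yours buys a cleaner, uniform treatment (and implicitly repairs the $r=p$ subtlety), the paper's buys brevity by accepting convergence to the origin as an admissible outcome in the degenerate branch.
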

\begin{proof}
   Suppose $\{(u_n,v_n)\}\subset\n_\eta$ satisfies \eqref{ps_rest} with $c<\tilde{c}$. As in the proof of Lemma \ref{ps1}, using $\langle I_\eta'(u_n,v_n),(u_n,v_n)\rangle=0$ we see that $\{(u_n,v_n)\}$ is a bounded sequence in $\W$. Thus, the sequence $\{\langle\psi_\eta'(u_n,v_n),(u_n,v_n)\rangle\}_{n}\subset\R$ is bounded. As a result, by \eqref{aaa} there exist $l\in(-\infty,0]$ such that $$\langle\psi_\eta'(u_n,v_n),(u_n,v_n)\rangle\to l, \; \text{ as } n\to\infty.$$ 
   By \eqref{ps_rest}, there exists $\la_n\in\R$ such that $$\|I_\eta'(u_n,v_n)-\la_n\psi_\eta'(u_n,v_n)\|_{\W^*}\to0$$
   as $n\to\infty$. Suppose $l=0$. By \eqref{aaa}, 
   $$\int_{\Omega} {\left|u_n\right|^{\alpha}\left|v_n\right|^{\beta}}\dx\to0,\; \int_{\Omega}\left(\left|u_n\right|^{r}+\left|v_n\right|^{r}\right)\dx\to0.$$
   Since, $(u_n,v_n)\in\n_{\eta}$, we immediately get $\|(u_n,v_n)\|_{\W}\to0$. Thus, $(u_n,v_n)\to0$ in $\W$. Suppose $l<0$. Since $\|I_\eta'(u_n,v_n)-\la_n\psi_\eta'(u_n,v_n)\|_{\W^*}\to0$ as $n\to\infty$, $$\langle I_\eta'(u_n,v_n),(u_n,v_n)\rangle-\la_n\langle\psi_\eta'(u_n,v_n),(u_n,v_n)\rangle\to0.$$
   Further, using $(u_n,v_n)\in\n_\eta$, we get $\la_n\to0.$ Thus $\{(u_n,v_n)\}$ is a PS sequence for $I_\eta$ in $\W$ at the level $c$. Now, using Lemma \ref{ps1}, we conclude the proof.
\end{proof}

\begin{remark}\label{remark_cat}
Observe that $I_{\eta}$ is bounded below on $\n_\eta$ and by the above lemma $I_{\eta}$ satisfies (PS$)_c^{\n_\eta}$ for $c<\tilde{c}$. Therefore, applying \cite[Theorem 5.20]{willem}, we get at least $\text{cat}_{I_\eta^k}(I_\eta^k)$ critical points of $I_\eta$ restricted to $\n_\eta$ for $k\in[\theta(\eta),\tilde{c})$, where the sublevel set $I_\eta^k$ is defined as $$I_\eta^{k}:=\left\{ (u,v)\in\n_\eta:I_\eta(u,v)\leq k \right\}.$$
\end{remark}
We set $B_\tau:=B_\tau(0)\subset\Om$, define $\W^{\text{rad}}=\{(u,v)\in\W(B_\tau):u,v\text{ are radial}\}$ and the functional $$I_{\eta,\tau}(u,v)=\frac{1}{p}\|(u,v)\|^p_{\mathcal{W}}-\frac{\gamma}{p_s^*}\int_{B_\tau}{|u|^\alpha|v|^\be}\dx-\frac{\eta}{r}\int_{B_\tau}(|u|^r+|v|^r)\dx,\; \forall \, (u,v) \in \W^{\text{rad}}.$$ Then we consider the following set
\begin{align*}
    \n_{\eta,\tau}^{\text{rad}}=\left\{(u,v)\in \W^{\text{rad}}:\langle I_{\eta,\tau}'(u,v),(u,v)\rangle=0\right\}.
\end{align*}
%By Theorem \ref{existence-main}, for $\Om=B_\tau$, \eqref{system_eqn} admits a solution  $(u_{\eta,\tau},v_{\eta,\tau})$ with the least energy. 
The discussion after Proposition \ref{subexistence&uniqueness}, particularly  \ref{1} and \ref{2}, also hold for $I_{\eta,\tau}$. Since $u_{0,\varepsilon,1}$ is radial (see Remark \ref{radial-function}) and for $(u_0,v_0)$ defined as in \eqref{def}, since $(t_{u_0,v_0}u_0,t_{u_0,v_0}v_0)\in\n_{\eta}$, we infer that $\n_{\eta,\tau}^{\text{rad}}$ is nonempty and $$\theta(\eta)\leq\theta_1(\eta):=\inf_{\n_{\eta,\tau}^{\text{rad}}}I_{\eta,\t}<\tilde{c}.$$
Applying Lemma \ref{ps4} and the Ekeland variational principle, there exists $(u_{\eta,\tau},v_{\eta,\tau})\in\n_{\eta,\tau}^{\text{rad}}$ such that $I_{\eta,\tau}(u_{\eta,\tau},v_{\eta,\tau})=\theta_1(\eta)$. Now our aim is to show that for $\eta$ small enough, $\text{cat}_{I_\eta^{\theta_1(\eta)}}(I_\eta^{\theta_1(\eta)})$ is bigger than $\text{cat}_{\Om}(\Om).$
For $\tau>0$, we define $$\Om_\tau^+:=\left\{x\in\R^d:d(x,\Om)<\tau\right\},\;\Om_\tau^-:=\left\{x\in\Om:d(x,\pa\Om)>\tau\right\}.$$
Clearly, $\Om_\tau^+$ and $\Om_\tau^-$ are homotopically equivalent to $\Om$. Now, we define the barycenter map $\Phi_\eta:\n_\eta\to\R^d$ by
\begin{equation*}
    \Phi_\eta(u,v)=\frac{\gamma}{\gamma^{-\frac{p}{p_s^*-p}}S_{\al,\be}^{\frac{d}{sp}}}\int_{\Om}x|u|^\al|v|^\be\dx,\quad\forall (u,v)\in\n_\eta.
\end{equation*}
Observe that $\Phi_\eta$ is continuous.
\begin{lemma}\label{aux_lem_3}
    There exists $\eta_{\ast\ast}>0$ such that for $\eta\in(0,\eta_{\ast\ast})$, $\Phi_{\eta}(u,v)\in\Om_\tau^+$ for every $(u,v)\in I_{\eta}^{\theta_1(\eta)}$.
\end{lemma}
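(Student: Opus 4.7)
The plan is to argue by contradiction: suppose no such $\eta_{\ast\ast}$ exists, so there are sequences $\eta_n \downarrow 0$ and $(u_n, v_n) \in I_{\eta_n}^{\theta(\eta_n)} \subset \n_{\eta_n}$ with $\Phi_{\eta_n}(u_n, v_n) \notin \Om_\tau^+$ for every $n$. The strategy is to show that in fact $\Phi_{\eta_n}(u_n, v_n)$ converges to some $y \in \overline{\Om} \subset \Om_\tau^+$, contradicting the assumption for $n$ large since $\Om_\tau^+$ is open.

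First I would extract the asymptotics of $(u_n,v_n)$. Since $\theta(\eta_n) = \inf_{\n_{\eta_n}} I_{\eta_n}$ and $(u_n, v_n) \in I_{\eta_n}^{\theta(\eta_n)}$, one has $I_{\eta_n}(u_n, v_n) = \theta(\eta_n) \to \tilde{c}$ by Lemma \ref{aux_lem_1-1.1}. Combining the Nehari identity with boundedness of the sequence in $\W$ (as in the proof of Lemma \ref{ps1}) and $\eta_n \to 0$, the $L^r$-contribution is killed in the limit, yielding the explicit values
\begin{align*}
    \lambda_n := \int_\Om |u_n|^\al |v_n|^\be \dx \to \lambda_* := \gamma^{-\frac{p^*_s}{p^*_s - p}} S_{\al,\be}^{\frac{d}{sp}}, \qquad \|(u_n, v_n)\|_{\W}^p \to \gamma \lambda_*.
\end{align*}
Renormalizing by $(\hat{u}_n, \hat{v}_n) := \lambda_n^{-1/p^*_s}(u_n, v_n)$ gives $\int_\Om |\hat{u}_n|^\al |\hat{v}_n|^\be \dx = 1$ (since $\al + \be = p^*_s$) and, using the identity $p^*_s = dp/(d-sp)$, $\|(\hat{u}_n, \hat{v}_n)\|_{\W}^p \to S_{\al,\be}$. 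Proposition \ref{par_the_rescale} then supplies $y_n \in \rd$, $r_n > 0$ with $r_n \to 0$ and $y_n \to y \in \overline{\Om}$, such that the rescaled pair $(\tilde{u}_n, \tilde{v}_n)(z) := r_n^{(d-sp)/p}(\hat{u}_n(r_n z + y_n), \hat{v}_n(r_n z + y_n))$ converges strongly to some $(\tilde{u}, \tilde{v})$ in $\W(\rd)$.

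The core step is to expand the barycenter via the substitution $x = r_n z + y_n$, combined with $|u_n|^\al |v_n|^\be = \lambda_n |\hat{u}_n|^\al |\hat{v}_n|^\be$, to obtain
\begin{align*}
    \int_\Om x |u_n|^\al |v_n|^\be \dx = \lambda_n y_n + \lambda_n r_n \int_{\rd} z\, |\tilde{u}_n|^\al |\tilde{v}_n|^\be \dz.
\end{align*}
I expect the main obstacle to be showing the correction term satisfies $r_n \int z |\tilde{u}_n|^\al |\tilde{v}_n|^\be \dz = o(1)$. Strong convergence in $\W(\rd)$ delivers strong $L^{p^*_s}(\rd)$-convergence of $\tilde{u}_n, \tilde{v}_n$, hence by H\"older with conjugate pair $(p^*_s/\al, p^*_s/\be)$ strong $L^1(\rd)$-convergence of $|\tilde{u}_n|^\al |\tilde{v}_n|^\be$; in particular the sequence is uniformly tight, so for $\ep > 0$ there exists $R$ with $\int_{|z| > R} |\tilde{u}_n|^\al |\tilde{v}_n|^\be \dz < \ep$ uniformly in $n$. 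The decisive observation is that $\supp(\tilde{u}_n, \tilde{v}_n) \subset (\Om - y_n)/r_n \subset B_{M/r_n}(0)$ for some $M$ depending only on $\operatorname{diam}(\Om)$ and $\sup_n |y_n|$; splitting at $|z| = R$ and using this support bound on the far-field region gives
\begin{align*}
    r_n \int_{\rd} |z|\, |\tilde{u}_n|^\al |\tilde{v}_n|^\be \dz \le r_n R + M \ep,
\end{align*}
and sending $n \to \infty$ then $\ep \to 0$ yields the claim.

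Putting the pieces together, $\int_\Om x |u_n|^\al |v_n|^\be \dx = \lambda_n y_n + o(1) \to \lambda_* y$. A short calculation (using $\al + \be = p^*_s$) shows that the prefactor multiplying this integral in the definition of $\Phi_\eta$ equals exactly $\lambda_*^{-1}$, so $\Phi_{\eta_n}(u_n, v_n) \to y \in \overline{\Om}$. Since $\Om_\tau^+$ is an open neighbourhood of $\overline{\Om}$, we conclude $\Phi_{\eta_n}(u_n, v_n) \in \Om_\tau^+$ for $n$ large, contradicting the choice of the sequence and producing the desired $\eta_{\ast\ast}$.
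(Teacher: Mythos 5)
Your proposal is correct and follows essentially the same route as the paper's proof: argue by contradiction, normalize $(u_n,v_n)$ so that $\int_\Om|\ov{u}_n|^\al|\ov{v}_n|^\be\dx=1$ and $\|(\ov{u}_n,\ov{v}_n)\|_{\W}^p\to S_{\al,\be}$, invoke Proposition \ref{par_the_rescale}, and pass to the limit in the barycenter integral to conclude $\Phi_{\eta_n}(u_n,v_n)\to y\in\ov{\Om}\subset\Om_\tau^+$, a contradiction. The only (harmless) deviations are that you obtain the asymptotics of $\|(u_n,v_n)\|_{\W}^p$ and $\int_\Om|u_n|^\al|v_n|^\be\dx$ from $I_{\eta_n}(u_n,v_n)\to\tilde{c}$ via Lemma \ref{aux_lem_1-1.1} — strictly speaking, on the sublevel set one only has $\inf_{\n_{\eta_n}}I_{\eta_n}\le I_{\eta_n}(u_n,v_n)\le\theta(\eta_n)$ rather than equality, but both bounds tend to $\tilde{c}$, so your limit still holds — whereas the paper squeezes through the quotient defining $S_{\al,\be}$ (using $u_n,v_n\neq0$, which comes from the contradiction hypothesis); and you justify the vanishing of the correction term in the barycenter by an explicit tightness-and-support argument instead of the paper's cutoff function $\varphi$ with dominated convergence.
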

\begin{proof}
    Arguing by contradiction, suppose there exists $\eta_n>0$, $\eta_n\to0$ and $(u_n,v_n)\in I_{\eta_n}^{\theta_1(\eta_n)}$ such that $\Phi_{\eta_n}(u_n,v_n)\not\in\Om_\tau^+$. Then
    \begin{align}
        &\|(u_n,v_n)\|_{\W}^p-\gamma\int_{\Omega} {\left|u_n\right|^{\alpha}\left|v_n\right|^{\beta}}\dx-\eta_n \int_{\Omega}\left(\left|u_n\right|^{r}+\left|v_n\right|^{r}\right)\dx=0,\label{par1}\\
        &\frac{1}{p}\|(u_n,v_n)\|_{\W}^p-\frac{\gamma}{p_s^*}\int_{\Om}{|u_n|^\alpha|v_n|^\be}\dx-\frac{\eta_n}{r}\int_{\Om}(|u_n|^r+|v_n|^r)\dx\leq {\theta_1(\eta_n)}<\tilde{c}.\label{par2}
    \end{align}
    Thus $I_{\eta_n}(u_n,v_n)-I_{\eta_n}'(u_n,v_n)(u_n,v_n)< \tilde{c}$. Hence arguing as in Lemma \ref{ps1}, $\{(u_n,v_n)\}$ is bounded in $\W$. Thus the sequences $\{\|u_n\|_r\},\{\|v_n\|_r\}$ are bounded. As a consequence, $\eta_n\int_{\Om}(|u_n|^r+|v_n|^r)\dx = o_n(1)$. So we can rewrite \eqref{par1} and \eqref{par2} as
    \begin{align}
        &\|(u_n,v_n)\|_{\W}^p-\gamma\int_{\Omega} {\left|u_n\right|^{\alpha}\left|v_n\right|^{\beta}}\dx=o_n(1),\label{par-3}\\
       \notag &\frac{1}{p}\|(u_n,v_n)\|_{\W}^p-\frac{\gamma}{p_s^*}\int_{\Om}{|u_n|^\alpha|v_n|^\be}\dx\leq {\theta_1(\eta_n)}+o_n(1).
    \end{align}
    Thus, 
    \begin{equation*}
        \left(\frac1p-\frac1{p_s^*}\right)\|(u_n,v_n)\|_{\W}^p\leq {\theta_1(\eta_n)}+o_n(1)\leq \tilde{c}+o_n(1).
    \end{equation*}
    By the definition of $\tilde{c}$, 
     \begin{equation}\label{par5}
        \|(u_n,v_n)\|_{\W}^p\leq \gamma^{-\frac{p}{p_s^*-p}}S_{\al,\be}^{\frac{d}{sp}}+o_n(1).
    \end{equation}
    Since $\Phi_{\eta_n}(u_n,v_n)\not\in\Om_\tau^+$, $u_n,v_n\not=0$. Further, using the definition of $S_{\al,\be}$ and \eqref{par-3},
    \begin{equation}\label{par6}
        S_{\al,\be}\leq \frac{\|(u_n,v_n)\|_{\W}^p}{(\int_{\Om}{|u_n|^\alpha|v_n|^\be}\dx)^{\frac{p}{p_s^\ast}}}=\gamma^{\frac{p}{p_s^\ast}}\|(u_n,v_n)\|_{\W}^{p(1-\frac{p}{p_s^\ast})}+o_n(1).
    \end{equation}
    Combining \eqref{par-3}, \eqref{par5} and \eqref{par6}, we have
    \begin{equation}\label{par7}
        \|(u_n,v_n)\|_{\W}^p=\gamma\int_{\Om}{|u_n|^\alpha|v_n|^\be}\dx+o_n(1)=\gamma^{-\frac{p}{p_s^*-p}}S_{\al,\be}^{\frac{d}{sp}}+o_n(1).
    \end{equation}
    Letting $$\ov{u}_n=\frac{u_n}{\left(\int_{\Om}|u_n|^\alpha|v_n|^\be\dx\right)^{\frac{1}{p_s^\ast}}},\quad \ov{v}_n=\frac{v_n}{\left(\int_{\Om}|u_n|^\alpha|v_n|^\be\dx\right)^{\frac{1}{p_s^\ast}}}.$$
    By the definition of $S_{\alpha,\beta}$, \eqref{par5} and \eqref{par6},
    \begin{equation*}
        S_{\al,\be}\leq \|(\ov{u}_n,\ov{v}_n)\|_{\W}^p=\frac{\|(u_n,v_n)\|_{\W}^p}{\left(\int_{\Om}|u_n|^\alpha|v_n|^\be\dx\right)^{\frac{p}{p_s^\ast}}}=\gamma^{\frac p{p_s^*}}\|(u_n,v_n)\|_{\W}^{p(1-\frac p{p_s^*})}\leq S_{\al,\be}+o_n(1).
    \end{equation*}
    Hence, as $n\to\infty$, the sequence $\{(\ov{u}_n,\ov{v}_n)\}$ satisfies 
    $$\int_{\Om}|\ov{u}_n|^\al|\ov{v}_n|^\be\dx=1,\quad \|(\ov{u}_n,\ov{v}_n)\|_{\W}^p\to S_{\al,\be}.$$
    Thus, by Proposition \ref{par_the_rescale}, there exists $\{(y_n,r_n)\}\subset\R^d\times\R^+$ such that the sequence $$(\tilde{u}_n,\tilde{v}_n)=r_n^{\frac{d-sp}{p}}\left(\ov{u}_n(r_nx+y_n),\ov{v}_n(r_nx+y_n) \right)$$ has a convergent subsequence, still denoted by $(\tilde{u}_n,\tilde{v}_n)$, such that $(\tilde{u}_n,\tilde{v}_n)\to (\tilde{u},\tilde{v})$ in $\W(\rd)$. Also, observe that 
    \begin{align}\label{cv-1}
        1 = \int_{\Om}|\ov{u}_n|^\al|\ov{v}_n|^\be\dx = \int_{\rd} |\ov{u}_n|^\al|\ov{v}_n|^\be\dx = \int_{\rd} |\tilde{u}_n|^\al|\tilde{v}_n|^\be\dx = \int_{\rd} |\tilde{u}|^\al|\tilde{v}|^\be\dx + o_n(1). 
    \end{align}
    Moreover, $r_n\to0$ and $y_n\to y$ in $\ov{\Om}$. Let $\varphi\in \C_c^{\infty}(\R^d)$ be such that $\varphi(x)=x$ on $\ov{\Om}$. Then, by \eqref{par7} and applying the dominated convergence theorem, we obtain 
\begin{align*}
    \Phi_{\eta_n}(u_n,v_n)&=\frac{\gamma}{\gamma^{-\frac{p}{p_s^*-p}}S_{\alpha,\be}^{\frac d{sp}}}\int_{\Om}x|u_n|^{\al}|v_n|^{\be}\dx=\frac{\gamma\int_{\Om}|u_n|^{\al}|v_n|^{\be}\dx}{\gamma^{-\frac{p}{p_s^*-p}}S_{\alpha,\be}^{\frac d{sp}}}\int_{\Om}x|\ov{u}_n|^{\al}|\ov{v}_n|^{\be}\dx\\
    &= (1+o_n(1))\int_{\R^d}\varphi(x)|\ov{u}_n|^{\al}|\ov{v}_n|^{\be}\dx=\int_{\R^d}\varphi(r_nz+y_n)|\tilde{u}_n(z)|^{\al}|\tilde{v}_n(z)|^{\be}\dz+o_n(1)\\
    &=y\int_{\R^d}|\tilde{u}(z)|^{\al}|\tilde{v}(z)|^{\be}\dz=y+o_n(1),
\end{align*}
    where the last identity follows using \eqref{cv-1}. 
    Thus, $\Phi_{\eta_n}(u_n,v_n)\to y\in\ov{\Om}$, which contradicts the fact that $\Phi_{\eta_n}(u_n,v_n)\not\in\Om_\tau^+$.
\end{proof}

 Taking $\Omega=B_{\tau}$ in Lemma \ref{aux_lem_1-1.1}, we observe that 
\begin{equation}\label{eq_1}
   \lim_{\eta \to 0} \theta_1(\eta)=\tilde{c}.
\end{equation} 
Using the same arguments as in Proposition \ref{submain-2} and  \eqref{eq_1} we get the following lemma.
\begin{lemma}\label{aux_lem_2}
    It holds $$ \lim_{\eta \to 0} \frac{\gamma\int_{B_{\tau}(0)}|u_{\eta,\tau}|^\al|v_{\eta,\tau}|^\be\dx}{\gamma^{-\frac{p}{p_s^*-p}}S_{\al,\be}^{\frac{d}{sp}}}=1.$$
\end{lemma}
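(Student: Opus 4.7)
The approach I take is to push the Nehari constraint into the energy identity so as to eliminate the $\W$-norm, displaying $\theta(\eta)$ as a positive combination of the two nonlinear integrals. Substituting the Nehari identity
\begin{align*}
\|(u_{\eta,\tau},v_{\eta,\tau})\|_{\W}^p=\gamma\int_{B_\tau}|u_{\eta,\tau}|^\al|v_{\eta,\tau}|^\be\dx+\eta\int_{B_\tau}(|u_{\eta,\tau}|^r+|v_{\eta,\tau}|^r)\dx
\end{align*}
into $I_{\eta,\tau}(u_{\eta,\tau},v_{\eta,\tau})=\theta(\eta)$ yields
\begin{align*}
\theta(\eta)=\Bigl(\tfrac{1}{p}-\tfrac{1}{p_s^*}\Bigr)\gamma\int_{B_\tau}|u_{\eta,\tau}|^\al|v_{\eta,\tau}|^\be\dx+\Bigl(\tfrac{1}{p}-\tfrac{1}{r}\Bigr)\eta\int_{B_\tau}(|u_{\eta,\tau}|^r+|v_{\eta,\tau}|^r)\dx,
\end{align*}
with both coefficients non-negative since $p\le p_s^*$ and $p\le r$.

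From this identity together with the upper bound $\theta(\eta)<\tilde c$, each summand is controlled by $\tilde c$ uniformly in $\eta$; reinserting these bounds into the Nehari identity gives a uniform $\W$-bound on $\{(u_{\eta,\tau},v_{\eta,\tau})\}$ when $r>p$. In the critical case $r=p$ the second summand vanishes, and to recover the $\W$-bound I would invoke the Poincar\'e-type estimate \eqref{lm_1} together with the standing hypothesis $\eta<\lambda_1/2$ to absorb $\eta\int_{B_\tau}(|u|^p+|v|^p)\dx$ into $\tfrac{1}{2}\|(u,v)\|_{\W}^p$ in the Nehari identity, producing $\|(u_{\eta,\tau},v_{\eta,\tau})\|_{\W}^p\le 2\gamma\int_{B_\tau}|u_{\eta,\tau}|^\al|v_{\eta,\tau}|^\be\dx$, which is already bounded by the first step. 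This is the adaptation to $B_\tau$ of the boundedness argument used for Proposition~\ref{submain-2} on $\Om$.

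Once the family is $\W$-bounded, the continuous embedding $\W\hookrightarrow L^r(B_\tau)\times L^r(B_\tau)$ gives that $\int_{B_\tau}(|u_{\eta,\tau}|^r+|v_{\eta,\tau}|^r)\dx$ is uniformly bounded, hence $\eta\int_{B_\tau}(|u_{\eta,\tau}|^r+|v_{\eta,\tau}|^r)\dx=o_\eta(1)$ as $\eta\to 0$. The identity for $\theta(\eta)$ then reduces to
\begin{align*}
\theta(\eta)=\Bigl(\tfrac{1}{p}-\tfrac{1}{p_s^*}\Bigr)\gamma\int_{B_\tau}|u_{\eta,\tau}|^\al|v_{\eta,\tau}|^\be\dx+o_\eta(1).
\end{align*}
Passing to the limit and invoking \eqref{eq_1}, i.e.\ $\theta(\eta)\to\tilde c=\bigl(\tfrac{1}{p}-\tfrac{1}{p_s^*}\bigr)\gamma^{-\frac{p}{p_s^*-p}}S_{\al,\be}^{d/(sp)}$, then dividing through by the common non-zero prefactor $\bigl(\tfrac{1}{p}-\tfrac{1}{p_s^*}\bigr)$, yields exactly the claimed ratio $1$. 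The only mildly delicate ingredient is the uniform $\W$-bound in the critical regime $r=p$, but this is precisely what the structural assumption $\eta<\lambda_1/2$ of Theorem~\ref{cat_theorem} was designed to supply; the remainder of the argument is algebraic and mirrors Proposition~\ref{submain-2} verbatim.
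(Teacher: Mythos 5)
Your proposal is correct and takes essentially the same route as the paper, whose proof of this lemma consists precisely of repeating the argument of Proposition \ref{submain-2} on $B_\tau$ (Nehari identity, uniform boundedness of the minimizing family so that the $\eta$-term is $o_\eta(1)$, hence $\theta(\eta)=(\tfrac1p-\tfrac1{p_s^*})\gamma\int_{B_\tau}|u_{\eta,\tau}|^\al|v_{\eta,\tau}|^\be\dx+o_\eta(1)$) combined with \eqref{eq_1}. Your absorption of the $r=p$ term via $\eta<\lambda_1/2$ is a harmless variant of the boundedness argument the paper runs through Lemma \ref{ps1} (using $\eta<S_{\al+\be}/c_0$), so no gap remains.
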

Now, we define the map $\zeta_\tau:\Om_\tau^-\to I_\eta^{\theta_1(\eta)}$ by \begin{equation}\label{eta_fn}
    \zeta_\tau(y)(x):=\begin{cases}
        (u_{\eta,\tau}(x-y),v_{\eta,\tau}(x-y)),&x\in B_\tau(y);\\(0,0),&x\not\in B_\tau(y),
    \end{cases}
\end{equation}
where $y\in\Om_\tau^-$. Observe that $\zeta_\tau$ is well-defined and continuous. Further, for $y\in\Om_\tau^-$,
\begin{align*}
    \Phi_\eta(\zeta_\t(y))&=\frac{\gamma}{\gamma^{-\frac{p}{p_s^*-p}}S_{\al,\be}^{\frac{d}{sp}}}\int_{B_\tau(y)}x|u_{\eta,\tau}(x-y)|^\al|v_{\eta,\tau}(x-y)|^\be\dx\\&=\frac{\gamma}{\gamma^{-\frac{p}{p_s^*-p}}S_{\al,\be}^{\frac{d}{sp}}}\int_{B_\tau(0)}(y+z)|u_{\eta,\tau}(z)|^\al|v_{\eta,\tau}(z)|^\be\dz\\
    &=\frac {\gamma\,y}{\gamma^{-\frac{p}{p_s^*-p}}S_{\al,\be}^{\frac{d}{sp}}}\int_{B_\tau(0)}|u_{\eta,\tau}(z)|^\al|v_{\eta,\tau}(z)|^\be\dz,
\end{align*}
where in the last line, we use the fact that $u_{\eta,\tau},v_{\eta,\tau}$ are radial and thus $$\int_{B_\tau(0)}z|u_{\eta,\tau}(z)|^\al|v_{\eta,\tau}(z)|^\be\dz=0.$$
To see this, we calculate for each coordinate $z_i$ of $z$,
\begin{align*}
    \int_{B_\tau(0)}z_i|u_{\eta,\tau}(z)|^\al|v_{\eta,\tau}(z)|^\be\dz &= \int_{0}^\tau \int_{\pa B_{\tau}} z_i |u_{\eta,\tau}(r)|^\al|v_{\eta,\tau}(r)|^\be  \d S \d r\\&=\int_{0}^\tau |u_{\eta,\tau}(r)|^\al|v_{\eta,\tau}(r)|^\be\left(\int_{\pa B_{\tau}} z_i\d S\right)\d r. 
\end{align*}
By either using spherical coordinates or the Divergence theorem, we can see that $\int_{\pa B_{\tau}}z_i\d S=0.$
Next, we define the map $H_\eta:[0,1]\times I_\eta^{\theta_1(\eta)}\to \R^d$ by $$H_\eta(t,(u,v))=\left(t+(1-t) \frac{\gamma^{-\frac{p}{p_s^*-p}}S_{\al,\be}^{\frac{d}{sp}}}{\gamma\int_{B_{\tau}}|u_{\eta,\tau}|^\al|v_{\eta,\tau}|^\be\dx}\right)\Phi_\eta(u,v).$$
By Lemma \ref{aux_lem_2}, we observe that for $\eta$ small enough, $u_{\eta,\tau}\neq0, v_{\eta,\tau}\neq0$  a.e. in $B_{\tau}$. Thus, $H_\eta$ is well-defined and continuous. 
\begin{lemma}\label{aux_lem_4}
    There exists $\eta_{\ast\ast\ast}>0$ such that for $\eta\in(0,\eta_{\ast\ast\ast})$, $H_\eta\in \C([0,1]\times I^{\theta_1(\eta)},\Om_\tau^+).$
\end{lemma}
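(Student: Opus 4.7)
The plan is to argue by contradiction, mirroring the strategy used in the proof of Lemma \ref{aux_lem_3} and combining it with the uniform scalar convergence produced by Lemma \ref{aux_lem_2}. Continuity of $H_\eta$ has already been recorded immediately before the statement, so the task reduces to guaranteeing that, for $\eta$ small, the image of $H_\eta$ stays inside $\Om_\tau^+$.

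Suppose no such $\eta_{\ast\ast\ast}$ exists. Then I can extract sequences $\eta_n \downarrow 0$, $t_n \in [0,1]$ and $(u_n,v_n) \in I_{\eta_n}^{\theta(\eta_n)}$ with
$$H_{\eta_n}\bigl(t_n,(u_n,v_n)\bigr) = c_n\,\Phi_{\eta_n}(u_n,v_n) \notin \Om_\tau^+, \qquad c_n := t_n + (1-t_n)\,\frac{\gamma^{-\frac{p}{p_s^*-p}} S_{\al,\be}^{\frac{d}{sp}}}{\gamma \int_{B_\tau} |u_{\eta_n,\tau}|^\al |v_{\eta_n,\tau}|^\be \dx}.$$
Lemma \ref{aux_lem_2} forces the fraction appearing in $c_n$ to tend to $1$; combining this with the identity $c_n - 1 = (1-t_n)(\text{fraction} - 1)$ and the bound $|1-t_n| \le 1$ yields $c_n \to 1$ \emph{uniformly} in $t_n \in [0,1]$.

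Next I would run the analysis of Lemma \ref{aux_lem_3} verbatim on the sequence $\{(u_n,v_n)\}$. From $(u_n,v_n) \in \n_{\eta_n}$ and $I_{\eta_n}(u_n,v_n) \le \theta(\eta_n) < \tilde{c}$, that proof produces the boundedness of $\{(u_n,v_n)\}$ in $\W$, together with the asymptotic identities
$$\|(u_n,v_n)\|_{\W}^p = \gamma \int_\Om |u_n|^\al |v_n|^\be \dx + o_n(1) = \gamma^{-\frac{p}{p_s^*-p}} S_{\al,\be}^{\frac{d}{sp}} + o_n(1),$$
and, after the same normalization $\bar u_n, \bar v_n$, an application of Proposition \ref{par_the_rescale} that delivers, along a subsequence, $\Phi_{\eta_n}(u_n,v_n) \to y$ for some $y \in \overline{\Om}$. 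Passing to the limit in $H_{\eta_n}(t_n,(u_n,v_n)) = c_n \Phi_{\eta_n}(u_n,v_n)$ then gives
$$H_{\eta_n}\bigl(t_n,(u_n,v_n)\bigr) \longrightarrow y \in \overline{\Om} \subset \Om_\tau^+.$$

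Since $\Om_\tau^+$ is open, $H_{\eta_n}(t_n,(u_n,v_n)) \in \Om_\tau^+$ for all sufficiently large $n$, contradicting the starting assumption and providing the desired $\eta_{\ast\ast\ast}$. I expect the only delicate point to be ensuring that the convergence $c_n \to 1$ is genuinely uniform in $t_n$, so that the limit of $c_n \Phi_{\eta_n}(u_n,v_n)$ is unambiguously $y$; this is handled immediately by the convex-combination form of $c_n$. Beyond that, no new compactness ingredient is required: Lemmas \ref{aux_lem_2}--\ref{aux_lem_3} (together with Proposition \ref{par_the_rescale}) supply everything, so the bulk of the work is careful bookkeeping rather than a substantial new obstacle.
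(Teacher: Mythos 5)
Your proposal is correct and follows essentially the same route as the paper: argue by contradiction along sequences $\eta_n\downarrow 0$, $t_n\in[0,1]$, $(u_n,v_n)\in I_{\eta_n}^{\theta(\eta_n)}$, use Lemma \ref{aux_lem_2} to send the scalar factor to $1$ and the compactness analysis behind Lemma \ref{aux_lem_3} to get $\Phi_{\eta_n}(u_n,v_n)\to y\in\ov{\Om}$, whence $H_{\eta_n}(t_n,(u_n,v_n))\to y\in\Om_\tau^+$, a contradiction. Your explicit remark that one reruns the argument of Lemma \ref{aux_lem_3} (rather than only quoting its statement) and that the convex-combination form makes the factor converge uniformly in $t_n$ is just a more careful rendering of the same proof.
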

\begin{proof}
   We first show that $H_{\eta}(t, (u,v)) \in \Omega_{\tau}^+$ for $(u,v) \in I^{\theta_1(\eta)}$ and $t \in [0,1]$. On the contrary, suppose there exists $\eta_n\downarrow0$, $t_n\to t_0\in[0,1]$ and $(u_n,v_n)\in I^{\theta_1(\eta_n)}$ but $H_{\eta_n}(t_n,(u_n,v_n))\not\in\Om_\tau^+$. By Lemma \ref{aux_lem_2} and Lemma \ref{aux_lem_3}, we have $$\frac{\gamma\int_{B_{\tau}}|u_{\eta_n,\tau}|^\al|v_{\eta_n,\tau}|^\be\dx}{\gamma^{-\frac{p}{p_s^*-p}}S_{\al,\be}^{\frac{d}{sp}}}\to1, \text{ and } \Phi_{\eta_n}(u_n,v_n)\to y\in\ov{\Om},$$
    and hence $H_{\eta_n}(t_n,(u_n,v_n))\to y\in\ov{\Om},$
    which is a contradiction. The continuity of $H_\eta$ is immediate from it's definition.
\end{proof}

\begin{lemma}\label{lemma-cat}
   There exists $\eta_\ast> 0$ such that for $\eta\in(0,\eta_\ast)$, 
    \begin{equation*}
\text{cat}_{I_\eta^{\theta_1(\eta)}}(I_\eta^{\theta_1(\eta)})\geq\text{cat}_{\Om}(\Om).
    \end{equation*}
\end{lemma}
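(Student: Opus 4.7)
The plan is to combine the three continuous maps $\zeta_\tau$, $\Phi_\eta$, and $H_\eta$ constructed in the preceding lemmata with the standard Ljusternik--Schnirelmann comparison principle: if $f\colon A\to X$ and $g\colon X\to B$ are continuous and $g\circ f$ is homotopic in $B$ to the inclusion $A\hookrightarrow B$, then $\text{cat}_B(A)\leq\text{cat}_X(X)$.

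First, I would fix $\tau>0$ small enough so that both $\Omega_\tau^-$ and $\Omega_\tau^+$ are homotopy equivalent to $\Omega$; this yields $\text{cat}_{\Omega_\tau^+}(\Omega_\tau^-)=\text{cat}_\Omega(\Omega)$. I then set $\eta_\ast:=\min\{\eta_{\ast\ast},\eta_{\ast\ast\ast}\}$, where $\eta_{\ast\ast}$ is from Lemma~\ref{aux_lem_3} and $\eta_{\ast\ast\ast}$ from Lemma~\ref{aux_lem_4}, so that for every $\eta\in(0,\eta_\ast)$ both the restriction $\Phi_\eta\colon I_\eta^{\theta(\eta)}\to\Omega_\tau^+$ and the homotopy $H_\eta\colon[0,1]\times I_\eta^{\theta(\eta)}\to\Omega_\tau^+$ are well-defined and continuous.

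The central computation is that $F(t,y):=H_\eta(t,\zeta_\tau(y))$ defines a homotopy inside $\Omega_\tau^+$ between the inclusion $\Omega_\tau^-\hookrightarrow\Omega_\tau^+$ and the composition $\Phi_\eta\circ\zeta_\tau$. Using the identity
\begin{equation*}
\Phi_\eta(\zeta_\tau(y))=\frac{\gamma\int_{B_\tau}|u_{\eta,\tau}|^\alpha|v_{\eta,\tau}|^\beta\,dx}{\gamma^{-p/(p_s^\ast-p)}S_{\alpha,\beta}^{d/(sp)}}\,y
\end{equation*}
established in the computation preceding Lemma~\ref{aux_lem_4}, together with the definition of $H_\eta$, a direct substitution gives $H_\eta(0,\zeta_\tau(y))=y$ and $H_\eta(1,\zeta_\tau(y))=\Phi_\eta(\zeta_\tau(y))$; continuity of $F$ is inherited from $H_\eta$ and $\zeta_\tau$.

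Finally, I apply the comparison principle with $A=\Omega_\tau^-$, $X=I_\eta^{\theta(\eta)}$, $B=\Omega_\tau^+$, $f=\zeta_\tau$, and $g=\Phi_\eta$. Given a covering $\{U_1,\dots,U_k\}$ of $I_\eta^{\theta(\eta)}$ by closed sets each contractible in $I_\eta^{\theta(\eta)}$, the closed sets $V_i:=\zeta_\tau^{-1}(U_i)$ cover $\Omega_\tau^-$, and each $V_i$ is contractible in $\Omega_\tau^+$ by concatenating $F|_{[0,1]\times V_i}$ (running the inclusion to $\Phi_\eta\circ\zeta_\tau|_{V_i}$) with $\Phi_\eta$ applied to the contraction of $U_i$ inside $I_\eta^{\theta(\eta)}$. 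This yields $\text{cat}_{\Omega_\tau^+}(\Omega_\tau^-)\leq\text{cat}_{I_\eta^{\theta(\eta)}}(I_\eta^{\theta(\eta)})$, and combining with the first step proves the lemma. Since all of the analytic work has been absorbed into the preceding lemmata, no serious obstacle remains; the only essential thing to check carefully is the homotopy identity $H_\eta(0,\zeta_\tau(y))=y$, which is purely computational.
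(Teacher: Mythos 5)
Your proposal is correct and follows essentially the same route as the paper: pull back a closed contractible cover of $I_\eta^{\theta(\eta)}$ via $\zeta_\tau$, use $\Phi_\eta(\zeta_\tau(y))$ being a positive multiple of $y$ together with $H_\eta$ to connect the inclusion $\Om_\tau^-\hookrightarrow\Om_\tau^+$ to $\Phi_\eta\circ\zeta_\tau$, and conclude via $\text{cat}_{\Om_\tau^+}(\Om_\tau^-)=\text{cat}_\Om(\Om)$, with $\eta_\ast=\min\{\eta_{\ast\ast},\eta_{\ast\ast\ast}\}$. The only cosmetic difference is that the paper realizes your concatenation in a single formula, $g_i(t,y)=H_\eta\bigl(t,h_i(t,\zeta_\tau(y))\bigr)$, rather than gluing the two homotopies.
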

\begin{proof}
Set $\eta_\ast:=\min\{\eta_{**},\eta_{***}\}$, where $\eta_{**}$ and $\eta_{***}$ are as in Lemma \ref{aux_lem_3} and Lemma \ref{aux_lem_4} respectively. Suppose $\text{cat}_{I_\eta^{\theta_1(\eta)}}(I_\eta^{\theta_1(\eta)})=N$. Then  
    $$I_\eta^{\theta_1(\eta)}=F_1\cup \cdots\cup F_N,$$
    where each $F_i$ is a closed and contractible set in $I_\eta^{\theta_1(\eta)}$. This means for each $1\leq i\leq N$, there exist $h_i\in \C([0,1]\times F_i:I_\eta^{\theta_1(\eta)})$ and tuples $(u_i,v_i)$ such that $$h_i(0,(u,v))=(u,v),\; h_i(1,(u,v))=(u_i,v_i),\;\forall\,(u,v)\in F_i.$$
    We define $G_i=\zeta_\tau^{-1}(F_i)\subset\Om_\tau^-$ for $1\leq i\leq N$, where $\zeta_\tau$ is given as in \eqref{eta_fn}. Since $\zeta_\tau$ is continuous, each $G_i$ is closed in $\Om_\tau^-$ and $$\Om_\tau^-=G_1\cup\cdots \cup G_N.$$
    We claim that each $G_i$ is contractible in $\Om_\tau^+$. To see this, we consider the map $g_i:[0,1]\times G_i\to\Om_\tau^+$ defined as $$g_i(t,y):=H_\eta(t,h_i(t,\zeta_\tau(y))).$$
Clearly each $g_i$ is continuous and $$g_i(0,y)=H_\eta(0,h_i(0,\zeta_\tau(y)))=H_\eta(0,\zeta_\tau(y))=\frac{\gamma^{-\frac{p}{p_s^*-p}}S_{\al,\be}^{\frac{d}{sp}}}{\gamma\int_{B_{\tau}}|u_{\eta,\tau}|^\al|v_{\eta,\tau}|^\be\dx}\Phi_{\eta}(\zeta_\t(y)))=y,$$
and $$g_i(1,y)=H_\eta(1,h_i(1,\zeta_\tau(y)))=H_\eta(1,(u_i,v_i))=\Phi_{\eta}(u_i,v_i).$$
Hence the claim holds. As a consequence, $$\text{cat}_{I_\eta^{\theta_1(\eta)}}(I_\eta^{\theta_1(\eta)})\geq\text{cat}_{\Om_\t^+}(\Om_\t^-)=\text{cat}_\Om(\Om),$$ as required.
\end{proof}
\noi \textbf{Proof of Theorem \ref{cat_theorem}:}
    From Remark \ref{remark_cat} and Lemma \ref{lemma-cat}, we get at least $\text{cat}_\Om(\Om)$ critical points of $I_\eta$ restricted on $\n_\eta$. Thus there exists $\la\in\R$ such that $I_\eta'(u,v)=\la\psi_\eta'(u,v)$ for $(u,v)\in\n$. In particular, $$0=\langle I_\eta'(u,v),(u,v)\rangle=\la\langle\psi_\eta'(u,v),(u,v)\rangle.$$
    Since $\langle I_\eta'(u,v),(u,v)\rangle=0$ and $\psi_\eta'(u,v),(u,v)\rangle<0$ for every $(u,v)\in\n$, we get $\la=0$. Therefore, the critical points of $I_\eta$ restricted on $\n_\eta$ are nontrivial weak solutions of \eqref{system_eqn}.
\qed

\section*{Declaration} 
\noindent \textbf{Funding.}	The research of N.~Biswas is partially supported by the SERB National Postdoctoral Fellowship (PDF/2023/000038). The research of P.~Das is partially supported by the NBHM Fellowship (0203/5(38)/2024-R\&D-II/11224).\\
\noindent \textbf{Competing interests.} The authors have no competing interests to declare that are relevant to the content of this article.\\
\noindent\textbf{Data availability statement.} Data sharing is not applicable to this article as no data sets were generated or analysed during the current study.

\bibliographystyle{abbrvnat}

\end{document}